\newtheorem{theorem}{Theorem}
\newtheorem{proposition}[theorem]{Proposition}
\newtheorem{lemma}[theorem]{Lemma}
\newtheorem{corollary}[theorem]{Corollary}
\newcommand{\aaa}{\alpha}
\newcommand{\lmd}{\lambda}
\newcommand{\CP}{\mathbb{CP}}
\newcommand{\CC}{\mathbb{C}}
\newcommand{\ZZ}{\mathbb{Z}}
\newcommand{\QQ}{\mathbb{Q}}
\newcommand{\vphi}{\varphi}
\newcommand{\ol}{\overline}
\newcommand{\lra}{\longrightarrow}
\newcommand{\lras}{\,\longrightarrow\,}
\newcommand{\proofend}{\hfill$\square$}
\newcommand{\inv}{^{-1}}
\newcommand{\Bs}{{\rm{Bs}}}
\newcommand{\Pic}{{\rm{Pic}}}
\newcommand{\ms}{\mathcal}
\newcommand{\vsp}{\vspace{3mm}}
\DeclareMathOperator{\mult}{mult}
\numberwithin{equation}{section}
\numberwithin{theorem}{section}
\begin{document}
\bibliographystyle{alpha} 
\title[]
{Algebraic dimension of twistor spaces
whose fundamental system is a pencil}

\author{Nobuhiro Honda}

\author{Bernd Kreu\ss ler}

\thanks{The first named author has been partially supported by JSPS KAKENHI Grant Number 24540061.
\\
{\it{Mathematics Subject Classification}} (2010) 53A30, 53C28}
\begin{abstract}
We show that 
the algebraic dimension of a twistor space
over $n\CP^2$
cannot be two if $n>4$ and  the fundamental system
(i.e.\,the linear system associated to the half-anti-canonical bundle,
which is available on any twistor space) is a pencil.
This means that if the algebraic dimension
of a twistor space on $n\CP^2$, $n>4$, is two, then the fundamental system
either is  empty or consists of a single member.
The existence problem for a twistor space on $n\CP^2$ with algebraic dimension
two is open for $n>4$. 
\end{abstract}
\maketitle

\section{Introduction}
If $Z$ is a compact complex manifold,
the algebraic dimension of $Z$, usually denoted by $a(Z)$,
is defined to be the complex dimension of a projective algebraic
variety, the rational function field of which is isomorphic to that of $Z$.
We always have $a(Z)\le \dim_{\CC}Z$,
and in case of equality, $Z$ is called Moishezon.
Any projective algebraic manifold is Moishezon.
In the other extreme case, $a(Z)=0$, $Z$ has no non-constant rational
function. 

Not so many compact differential manifolds 
admit complex structures whose algebraic dimension 
ranges from zero to half of its real dimension.
Complex tori of dimension $d\ge 2$ are examples of such manifolds.
In complex dimension two, only complex tori and K3 surfaces are
examples for which all three possible values of the algebraic dimension
actually occur.
In dimension three, {\em twistor spaces} associated to 
self-dual metrics on 4-manifolds \cite{AHS} are good candidates
of such manifolds.

By a result of Campana \cite{C91}, if $Z$ is 
a Moishezon twistor space, then the base 4-manifold
is homeomorphic to $n\CP^2$, the connected sum of $n\ge 0$ copies of complex
projective planes; by convention $0\,\CP^2=S^4$.
If $n < 4$, any twistor space on $n\CP^2$ is Moishezon
as long as the corresponding self-dual metric has
positive scalar curvature \cite{KK92,P92}.
No example seems to be known of a self-dual metric of
non-positive scalar curvature on these manifolds.
Thus if $n < 4$, all known twistor spaces are Moishezon.

The situation is very different for $n\ge 4$.
First, if $n=4$, for any $a\in\{1,2,3\}$,
there actually exists a twistor space $Z$ over $4\CP^2$
which satisfies $a(Z)=a$, see \cite{P92, CK99, Hon00, Hon06}.
Also, it is known that $a(Z)\neq 0$ as long as the self-dual metric 
on $4\CP^2$ is of positive scalar curvature \cite{P92}.
Moreover, no example seems to be known of a
self-dual metric on $4\CP^2$  of non-positive scalar curvature.
Thus our understanding of possible values of the algebraic dimension is quite
satisfactory in case $n=4$, too. 

The main focus of this article is on the case $n>4$.
For any $n>4$ and $a\in\{0,1,3\}$ it is known that 
there exists a twistor space $Z$ on $n\CP^2$ which satisfies $a(Z)=a$,
\cite{DF89, P92, LB91}.
Moreover, in \cite[Main Theorem]{Fu04}, it is stated that 
there exists a twistor space $Z$ on $n\CP^2$ with $a(Z)=2$
for any $n>4$. 
These spaces satisfy $\dim|K^{-1/2}|=1$,
where $K^{-1/2}$ is the natural square root of the 
anti-canonical line bundle on $Z$,
which is available on any twistor space \cite{Hi81}.
This is in contrast to the following theorem, which is main result of this
article. 

\begin{theorem}\label{thm}
If $n>4$ and $Z$ is a twistor space on $n\CP^2$ such that 
$\dim |K^{-1/2}|=1$, then  $a(Z)\neq 2$.
\end{theorem}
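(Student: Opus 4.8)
The plan is to argue by contradiction: assume $a(Z)=2$ while the fundamental system $|K^{-1/2}|$ is a pencil. When $a(Z)=2$, a standard construction produces an \emph{algebraic reduction} $f\colon Z\dashrightarrow S$ onto a compact complex surface $S$, with general fiber an elliptic curve (the fiber of the algebraic reduction of a threefold with $a=2$ is a curve, and for twistor spaces the adjunction/triviality of $K_Z$ along a generic such curve forces genus one). The first task is to relate the pencil $|K^{-1/2}|$ to this map $f$. Since members of $|K^{-1/2}|$ are anticanonically embedded divisors, their intersection with a generic elliptic fiber is a point-count governed by $K^{-1/2}\cdot(\text{fiber})$, which one computes to be small (degree one or two); this pins down how the $2$-dimensional system of functions pulled back from $S$ interacts with the $1$-dimensional fundamental system, and in particular shows the base locus and the indeterminacy locus of $f$ are constrained by the base curve of the pencil.

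Next I would analyze the structure of the general member $S_\lambda\in|K^{-1/2}|$. By the genus formula on the twistor space, a generic $S_\lambda$ is a smooth surface with $K_{S_\lambda}$ trivial or close to trivial — concretely one expects a rational surface or a (blown-up) elliptic-fibered surface depending on $n$ — and the real structure on $Z$ acts on the pencil, so the real members and the behavior of $f$ restricted to $S_\lambda$ must be compatible with conjugation. The key intermediate step is to show that the restriction $f|_{S_\lambda}$ is (up to birational modification) a fibration $S_\lambda\to \mathbb{P}^1$ or a generically finite map, and then to feed in the Hirzebruch signature / $b_2=n$ constraint: the second Betti number $b_2(Z)=n$ and the known expression for $K^{-1/2}\cdot K^{-1/2}\cdot K^{-1/2}=-2(n-4)$ (valid for twistor spaces on $n\CP^2$) force numerical inequalities on the divisors appearing in the fiber decomposition of $f$. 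For $n>4$ this triple self-intersection is negative and growing, which is exactly what should clash with the existence of a $2$-dimensional linear system of effective divisors compatible with the elliptic fibration.

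The decisive step — and the main obstacle — is a delicate intersection-theoretic or cohomological computation on a resolution $\widetilde Z\to Z$ of the indeterminacy of $f$, showing that the configuration ``pencil $|K^{-1/2}|$ + elliptic algebraic reduction'' is numerically impossible once $n>4$. Concretely, I would: (i) resolve $f$ to get a genuine morphism $\widetilde f\colon \widetilde Z\to S'$ with $S'$ a smooth surface mapping to $S$; (ii) decompose the pullback of a generic anticanonical half-divisor into a horizontal part (mapping onto $S'$) and a vertical part (contained in fibers of $\widetilde f$); (iii) use $K_{\widetilde Z}=\widetilde f^*(\text{stuff})+\sum a_i E_i$ together with the twistor identity $K_Z=-2\cdot[K^{-1/2}]$ to express everything in terms of $\widetilde f^*(\mathrm{NS}(S'))$, the exceptional divisors, and the pencil class; and (iv) derive a contradiction with $b_2(Z)=n$, $n>4$, via the Hodge index theorem on fibers or on $S'$. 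The genuine difficulty is controlling the vertical components and the singular fibers of $\widetilde f$ — twistor spaces can have complicated reducible anticanonical divisors — so the bulk of the real work will be a careful case analysis of the possible fiber types of the elliptic algebraic reduction and showing each is excluded. The special role of $n=4$, where $K^{-1/2}\cdot K^{-1/2}\cdot K^{-1/2}=0$ and such configurations genuinely exist, is a useful sanity check that the argument must \emph{use} the strict inequality $n>4$.
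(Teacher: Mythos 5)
Your proposal is a strategy outline rather than a proof: you yourself identify the ``decisive step'' as an unperformed ``delicate intersection-theoretic or cohomological computation'' whose difficulty lies in ``controlling the vertical components and the singular fibers.'' That decisive step is precisely the content of the theorem, and the route you sketch for it (resolve the algebraic reduction $f\colon Z\dashrightarrow S'$, decompose pullbacks of fundamental divisors into horizontal and vertical parts, and contradict $b_2(Z)=n$ via the Hodge index theorem) is not carried out and, as far as one can tell, would not work in that form. The paper never works with the surface target of the algebraic reduction at all; it works with the pencil $|F|=|K^{-1/2}|$ itself, i.e.\ with the fibration $f_1\colon Z_1\to\CP^1$ obtained by blowing up the base cycle $C$ of the pencil, and the whole argument is about the anti-Kodaira dimension $\kappa\inv(S)$ of the members $S$ of the pencil, via the relation $a(Z)=1+\kappa\inv(S)$ for generic $S$ and $a(Z)\le 1+\kappa\inv(S)$ for every smooth $S$. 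If $a(Z)=2$, every smooth member must satisfy $\kappa\inv(S)=1$, which translates (via the Zariski decomposition $C=P+N$ of the anti-canonical cycle, which is independent of the member) into the condition that $m_0P|_C$ has finite order in $\Pic^0(C)$ for every member. The contradiction is then obtained by showing that if this order were the same finite $\tau$ for all members, the systems $|\nu\tau m_0F|$ would all be composed with the pencil, forcing $a(Z)=1$.

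Two concrete points where your sketch misses the mechanism. First, your structural guess about the members is off: a smooth $S\in|F|$ is a rational surface with $K_S^2=8-2n<0$, not a surface with ``$K_{S_\lambda}$ trivial or close to trivial''; the relevant dichotomy is $\kappa\inv(S)\in\{0,1,2\}$, governed by the Zariski decomposition of the unique anti-canonical curve $C\subset S$, which is a cycle of rational curves. Second, and more importantly, the place where $n>4$ enters is not a Betti-number or Hodge-index inequality: it is that $K_S^2<0$ forces the integer coefficients $l_i$ of the nef part $m_0P=\sum l_iC_i+\sum l_i\ol C_i$ to be non-constant, so that after reindexing $l_1>l_2$. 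This single strict inequality produces a negative intersection number of the relevant line bundle with one component of a reducible fibre of $f_1$, which propagates to show that the exceptional divisor $E$ is a fixed component of $|f_1^*\ms O(r)+\rho m_0\bm P|$ for all $\rho>0$, and hence that every $|\nu\tau m_0 F|$ has one-dimensional image. For $n=4$ one has $K_S^2=0$, all $l_i$ equal, and the argument genuinely breaks down — matching the known existence of algebraic dimension two there. Without an argument at this level of precision about the fixed components of the pluri-fundamental systems, your proposal does not establish the theorem.
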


At the end of this article we give a detailed explanation of the contradiction
between this result and the result of \cite{Fu04}.

By a result of \cite{Kr99},
if a twistor space $Z$ over $n\CP^2$, $n>4$, satisfies
$\dim |K^{-1/2}|>1$, then $Z$ is Moishezon.
Therefore from Theorem \ref{thm}, we obtain
\begin{proposition}
If a twistor space $Z$  over $n\CP^2$, $n>4$,
satisfies $a(Z)=2$, then the system $|K^{-1/2}|$ consists of a single
member, or is empty.
\end{proposition}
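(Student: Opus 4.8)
The plan is to deduce the proposition directly from the two facts recalled immediately before its statement — Theorem \ref{thm} and the Moishezon criterion of \cite{Kr99} — by a short case analysis on the dimension of the fundamental system. I would take both of these as given: the first is the main result of the paper, the second is quoted from the literature.

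So suppose $Z$ is a twistor space on $n\CP^2$ with $n>4$ and $a(Z)=2$. Since $\dim_{\CC}Z=3$, the hypothesis $a(Z)=2$ says in particular that $Z$ is not Moishezon. Set $d=\dim|K^{-1/2}|$, with the convention $d=-1$ when the fundamental system is empty, so that $d\ge 0$ means there is at least one effective divisor and $d=0$ means there is exactly one (a single member). Three cases arise. If $d\ge 2$, then the criterion of \cite{Kr99} (valid precisely in the range $n>4$) forces $Z$ to be Moishezon, contradicting $a(Z)=2$. If $d=1$, i.e.\ the fundamental system is a pencil, then Theorem \ref{thm} (again valid for $n>4$) gives $a(Z)\neq 2$, a contradiction. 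Hence $d\le 0$, which is exactly the assertion that $|K^{-1/2}|$ is empty or consists of a single member.

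That is the entire argument, so the proposition carries no independent difficulty: it is a formal corollary of Theorem \ref{thm} together with the cited Moishezon criterion, and any real obstacle is located in Theorem \ref{thm} rather than here. For completeness I note where the effort in that theorem must go: one has to analyse the geometry forced by the assumption that the fundamental system is a pencil — the base locus of the pencil and how the twistor (real) structure acts on it, the structure of the generic and the degenerate members of the pencil, and the constraints these impose on an algebraic reduction of $Z$ to a surface. The content that has to be ruled out is precisely the coexistence, when $n>4$, of a one-dimensional fundamental system with algebraic dimension two; showing that this cannot happen is where all the work lies, and once it is done the proposition follows by the trivial case distinction above.
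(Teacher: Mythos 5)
Your argument is correct and coincides with the paper's own proof: the proposition is deduced exactly as you describe, by combining the Moishezon criterion of \cite{Kr99} for $\dim|K^{-1/2}|>1$ with Theorem \ref{thm} for the pencil case. Nothing further is needed.
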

 
To the best of the authors' knowledge, existence of this kind of twistor
spaces is not known. 
Thus the existence of a twistor space on $n\CP^2$ with algebraic dimension two
seems to be not known. 
 
Here is an outline of our proof of Theorem \ref{thm}.
Let $Z$ be a twistor space over $n\CP^2$ which 
satisfies $\dim |K^{-1/2}|=1$ and assume 
that the base locus of the pencil $|K^{-1/2}|$ 
constitutes a cycle of smooth rational curves.
This assumption is always satisfied
if $n>4$ and $a(Z)=2$ (Proposition \ref{prop:cycle2}).
By blowing up this base curve and then taking a simultaneous small resolution
of all ordinary double points that appear by the blow-up, we get the  diagram
\begin{align}
\begin{CD}
Z_1 @>{\mu_1}>> Z\\
@V{f_1}VV\\
\CP^1\;.
\end{CD}
\end{align}
Here, $\mu_1$ is the composition of the 
blow-up and the small resolution, and
$f_1$ is a surjective morphism induced by
the pencil $|\mu_1^*K^{-1/2}-E|$,
where $E$ is the exceptional divisor of 
the birational morphism $\mu_1$.
Smooth fibres of $f_1$ are naturally identified with 
members of the pencil $|K^{-1/2}|$,
and they are rational surfaces.

From this fibration, if $S$ denotes a {\em generic} member of the pencil,
we have the equality 
$
a(Z)= 1 + \kappa\inv(S),
$
where $\kappa\inv(S)$ is the anti-Kodaira dimension
\cite[Corollary 4.3]{Kr99}.
Hence if $a(Z)=2$, generic members of the pencil
satisfy $\kappa\inv(S)=1$.
Moreover, for any smooth member $S$ of $|K^{-1/2}|$, 
we have $a(Z)\le 1+ \kappa\inv(S)$.
Hence if $a(Z)=2$, we have $\kappa\inv(S)\ge 1$ for 
any smooth member $S$ of the pencil.
In Proposition \ref{prop:ak2} we show that 
if some member of the pencil satisfies 
$\kappa\inv(S)=2$,
then $a(Z)=3$.
Hence if $a(Z)=2$, any smooth member $S$ of the pencil
must satisfy $\kappa\inv(S)=1$.
We shall show by contradiction that this situation can never happen.
This implies that even if {\em some} member $S$ of the pencil
satisfies $\kappa\inv(S)=1$, 
{\em generic} members of the pencil necessarily satisfy
$\kappa\inv=0$, which then implies that $a(Z)=1$.
The main tool of our analysis is the Zariski decomposition
for a divisor on a surface \cite{Z62}.%
Finally, a remark about notation.
If $|L|$ is a complete linear system on some compact complex manifold and
if we write 
$$
|L| = |L'| + D
$$
for some effective divisor $D$,
then $D$ is a fixed component of $|L|$.
But this does not imply that the system $|L'|$ is without fixed component.

We would like to express our sincere gratitude to Professor Fujiki
for his generous and helpful comments about the anti-Kodaira
dimension of general members of the pencil $|K^{-1/2}|$ on the twistor space.
These are reflected not only in the explanation in Section 4 but 
throughout this article.
We would also like to thank the referee for forcing us to improve the clarity of this article.

\section{Zariski decomposition of 
an anti-canonical divisor on a rational surface }
\label{s:Z}
In this section we fist recall basic properties of 
the Zariski decomposition of a divisor on a projective surface,
and then investigate the Zariski decomposition
of an anti-canonical divisor on a non-singular rational surface
of anti-Kodaira dimension zero or one.

Let $S$ be a non-singular projective surface,
and $C$ an effective divisor on $S$.
Then the {\em Zariski decomposition} of $C$ is the decomposition 
$$
C = P + N,
$$ 
where $P$ and $N$ are  effective $\QQ$-divisors 
(i.e.\ divisors with positive rational coefficients) or 
the zero divisor, which satisfy
\begin{enumerate}
\item[(i)] $P$ is nef, that is $P D\ge 0$ for any curve
$D$ on $S$,
\item[(ii)] if $N\neq 0$, $N$ is negative definite in the following sense:
if $N = \sum \aaa_iE_i$ with distinct, irreducible $E_i$ and 
$\alpha_{i}\ne 0$, the total intersection matrix $(E_iE_j)$ is negative
definite, 
\item[(iii)] if $N\neq 0$, $PE_i = 0$ for all $i$.
\end{enumerate}
Any effective divisor $C$ admits a unique Zariski decomposition, see
\cite{Z62,Sa84}.
$P$ is called the {\em nef part} of $C$.
Note that if  $C=P+N$ is the Zariski decomposition of $C$,
then for any  integer $m>0$,  $mC = mP + mN$  
is the Zariski decomposition of $mC$.
One of the most important properties of the Zariski decomposition 
is that it disposes of sub-divisors of $mC$ which do not contribute
to the dimension $h^0(\ms O_S(mC))$ in the following sense:

\begin{proposition}\label{prop:Z1}
(\cite[Lemma 2.4]{Sa84})
Let $S, C$ and $C= P+N$ be as above,
and $m>0$ an integer.
Then we have 
$$
|mC| = \big|\lfloor mP\rfloor\big| + \lceil mN \rceil,
$$
where for an effective $\QQ$-divisor $D$, 
$\lfloor D \rfloor$ and $\lceil D\rceil$ denote
the (integral) round down and the round up of $D$ respectively.
In particular, the divisor $ \lceil mN \rceil$
is a fixed component of $|mC|$.
\end{proposition}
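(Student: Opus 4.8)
The plan is to prove the equality of linear systems by establishing the two inclusions separately; the substantive one is that every effective divisor linearly equivalent to $mC$ automatically contains $\lceil mN\rceil$. First I would record the purely combinatorial identity
$$
\lfloor mP\rfloor + \lceil mN\rceil \;=\; mC
$$
as honest (integral) divisors. Indeed, since $mP+mN = mC$ has integral coefficients, for each prime divisor the multiplicities $p=\mult_E(mP)\ge 0$ and $n=\mult_E(mN)\ge 0$ satisfy $p+n\in\ZZ$; a one-line case check ($n$ integral or not) then gives $\lfloor p\rfloor+\lceil n\rceil = p+n$. This identity immediately yields the easy inclusion: for any $D_0\in|\lfloor mP\rfloor|$ the divisor $D_0+\lceil mN\rceil$ is effective and linearly equivalent to $mC$, so $|\lfloor mP\rfloor|+\lceil mN\rceil\subseteq|mC|$.

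For the reverse inclusion I may assume $N\neq 0$ (the case $N=0$ is trivial) and write $N=\sum_i\alpha_i E_i$ with the $E_i$ distinct, irreducible and $\alpha_i>0$. Let $D\in|mC|$ and decompose $D=D'+\sum_i b_iE_i$ with $b_i\in\ZZ_{\ge 0}$ and $D'$ effective with no component among the $E_i$. Since $D\sim mC=mP+mN$, the $\QQ$-divisor $D-mN = D'+\sum_i(b_i-m\alpha_i)E_i$ is $\QQ$-linearly equivalent to $mP$; intersecting with $E_j$ and using $mP\cdot E_j=0$ (property (iii)) gives
$$
\sum_i(b_i-m\alpha_i)(E_i\cdot E_j) \;=\; -\,D'\cdot E_j \;\le\; 0 \qquad\text{for every } j,
$$
the inequality holding because $D'$ and the irreducible curve $E_j$ share no component. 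Setting $x_i=b_i-m\alpha_i$, it remains to deduce $x_i\ge 0$ for all $i$ from the facts that $(E_i\cdot E_j)$ is negative definite (property (ii)), that $E_i\cdot E_j\ge 0$ for $i\ne j$, and that $\sum_i x_i(E_i\cdot E_j)\le 0$ for every $j$.

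This last step is where the real content lies, and I expect it to be the main obstacle. I would argue by contradiction: let $I=\{i:x_i<0\}$ and put $y_i=-x_i>0$ for $i\in I$. For $j\in I$, split the sum over $I$ and its complement; every term with $i\notin I$ is nonnegative (then $i\ne j$, so $E_i\cdot E_j\ge 0$, and $x_i\ge 0$), so discarding them gives $\sum_{i\in I}y_i(E_i\cdot E_j)\ge 0$. Multiplying by $y_j>0$ and summing over $j\in I$ yields $\sum_{i,j\in I}y_iy_j(E_i\cdot E_j)\ge 0$, contradicting the negative definiteness of the principal submatrix $(E_i\cdot E_j)_{i,j\in I}$ evaluated on the nonzero vector $(y_i)_{i\in I}$. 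Hence $b_i\ge m\alpha_i$, and since $b_i$ is an integer, $b_i\ge\lceil m\alpha_i\rceil$; so $D-\lceil mN\rceil$ is effective, and by the combinatorial identity it is linearly equivalent to $\lfloor mP\rfloor$, i.e.\ $D\in|\lfloor mP\rfloor|+\lceil mN\rceil$. Combining the two inclusions proves $|mC|=|\lfloor mP\rfloor|+\lceil mN\rceil$, and the final assertion that $\lceil mN\rceil$ is a fixed component of $|mC|$ is then immediate. Every step besides the negative-definiteness argument is formal manipulation with divisors and with the defining properties (i)--(iii) of the Zariski decomposition.
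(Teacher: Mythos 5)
The paper does not prove this proposition at all: it is quoted verbatim from Sakai \cite[Lemma 2.4]{Sa84}, so there is no in-paper argument to compare against. Your proof is correct and complete, and it is essentially the standard proof of Zariski's lemma: the combinatorial identity $\lfloor mP\rfloor+\lceil mN\rceil=mC$ is verified correctly (the case split on whether $\mult_E(mN)$ is integral is exactly what is needed), the easy inclusion follows, and the substantive inclusion is reduced correctly, via $mP\cdot E_j=0$ and $D'\cdot E_j\ge 0$, to the linear-algebra fact that a negative definite matrix with non-negative off-diagonal entries and $Mx\le 0$ forces $x\ge 0$. Your contradiction argument for that fact (restricting to the index set $I=\{i: x_i<0\}$, discarding the non-negative cross terms, and evaluating the negative definite principal submatrix on the positive vector $(y_i)_{i\in I}$) is sound, and the final passage from $b_i\ge m\alpha_i$ to $b_i\ge\lceil m\alpha_i\rceil$ uses integrality of $b_i$ correctly. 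No gaps.
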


Next let $S$ be a non-singular rational surface.
For an integer $m>0$, the linear system 
$|mK\inv|$ on $S$ is called the $m$-th anti-canonical system.
If this is non-empty, the associated rational 
map $\phi_m:S\to\CP^r$, $r=h^0(mK\inv)-1$, is called
the $m$-th anti-canonical map.
The anti-Kodaira dimension, $\kappa\inv(S)$, of a rational surface $S$ is
defined as  
$$
\kappa\inv(S):= 
\max_{m > 0} \;\dim \phi_m(S)
\in\{2,1,0,-\infty\}.
$$
Here, $\kappa\inv(S)=-\infty$ occurs
when $|mK\inv|=\emptyset$ for all $m>0$,
but we will not encounter this case in the following.
Typical examples of rational surfaces with $\kappa\inv=2$ are 
Del Pezzo surfaces, while
simple examples  with $\kappa\inv=1$ are 
obtained from $\CP^2$ by blowing up the 9 intersection points
of two anti-canonical (i.e.\ cubic) curves.

The anti-Kodaira dimension of rational surfaces was 
investigated in more detail by F.\ Sakai \cite{Sa84}.
His main tool was the Zariski decomposition of the anti-canonical divisor.
Even if $\kappa\inv(S)\ge 0$, the anti-canonical divisor might not be
effective. However, Sakai \cite[Lemma 3.1]{Sa84} has shown that the divisor
$K_{S}\inv$ is pseudo effective iff $\kappa\inv(S)\ge 0$.  
A divisor $C$ on a surface $S$ is called pseudo-effective if 
$CH\ge 0$ for any ample divisor $H$. Fujita \cite{Fuj79} has shown that
any pseudo effective divisor $C$ has a unique Zariski decomposition $C=P+N$
with the same properties as above except that $P$ is no longer required to be
effective. Therefore, if $\kappa\inv(S)\ge 0$ and  
$K_{S}\inv = P+N$ is the Zariski decomposition of an anti-canonical divisor,
one can define 
{\em the degree} of a surface $S$
to be the self-intersection number
\begin{align}\label{degree}
d(S) := P^2,
\end{align}
which is clearly a non-negative rational number.
Rational surfaces with $\kappa\inv = 2$ may be
characterised in terms of the degree as follows.

\begin{proposition}\label{prop:degree}
(Sakai \cite[Proposition 1]{Sa83})
Let $S$ be a non-singular rational surface with $\kappa\inv(S)\ge 0$,
and let $d(S)$ be the degree of $S$ as above.
Then $\kappa\inv(S)=2$ if and only if $d(S)>0$.
\end{proposition}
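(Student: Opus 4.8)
The plan is to reduce the assertion to the standard fact on a smooth projective surface that a nef $\QQ$-divisor is big exactly when its self-intersection number is positive. Write the Zariski decomposition $K_S\inv = P+N$ as in the statement. The first step is to observe that $\kappa\inv(S)$ equals the Iitaka ($D$-)dimension $\kappa(S,P)$ of the nef $\QQ$-divisor $P$. Indeed, Proposition \ref{prop:Z1} gives $|mK_S\inv| = \big|\lfloor mP\rfloor\big| + \lceil mN\rceil$ for every $m>0$, so $h^0(\ms O_S(mK_S\inv)) = h^0(\ms O_S(\lfloor mP\rfloor))$ and the rational map attached to $|mK_S\inv|$ has the same image as the one attached to $|\lfloor mP\rfloor|$; taking the maximum over $m$ of the dimension of this image yields $\kappa\inv(S) = \kappa(S,P)$. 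In particular, the hypothesis $\kappa\inv(S)\ge 0$ guarantees $|\lfloor mP\rfloor|\ne\emptyset$ for some $m>0$, and since $d(S)=P^2\ge 0$ always, it remains to prove that $\kappa(S,P)=2$ if and only if $P^2>0$.

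For the implication $P^2>0 \Rightarrow \kappa(S,P)=2$, fix $m_0>0$ with $P_0:=m_0P$ integral, so that $P_0$ is nef with $P_0^2>0$. Riemann--Roch on the rational surface $S$, where $\chi(\ms O_S)=1$, gives
$$
\chi(\ms O_S(kP_0)) = 1 + \tfrac12 k^2 P_0^2 - \tfrac12 k\,P_0 K_S
$$
for $k>0$, which grows quadratically in $k$. Moreover $h^2(\ms O_S(kP_0)) = h^0(\ms O_S(K_S-kP_0))$ vanishes for $k\gg 0$: if $K_S-kP_0$ were effective, intersecting it with the nef divisor $P_0$ would force $0\le (K_S-kP_0)P_0 = K_SP_0 - kP_0^2$, impossible for large $k$. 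Hence $h^0(\ms O_S(kP_0))\ge \chi(\ms O_S(kP_0))$ grows like $k^2$, which on a surface means that the image of the map associated to $|kP_0|$ is two-dimensional for $k\gg 0$; that is, $\kappa(S,P)=2$.

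For the converse, I would argue contrapositively: if $P^2=0$, then with $P_0=m_0P$ as above, $P_0$ is nef with $P_0^2=0$, hence not big, so $\kappa(S,P)=\kappa(S,P_0)\le 1$, and by the first step $\kappa\inv(S)\le 1$. I expect the first implication to be the only non-formal point: one must pass from the $\QQ$-divisor $P$ to an integral multiple and verify that the roundings $\lfloor\cdot\rfloor$, $\lceil\cdot\rceil$ do not disturb the asymptotics, which is harmless since, working with the fixed divisor $P_0=m_0P$, all correction terms are bounded independently of $k$. The two surface-theoretic ingredients used here---that a nef divisor of positive self-intersection on a surface is big, and that a nef divisor of zero self-intersection is not (for instance because the volume of a nef divisor equals its self-intersection number)---are standard, and everything else follows formally from the properties of the Zariski decomposition together with Proposition \ref{prop:Z1}.
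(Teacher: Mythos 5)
The paper offers no proof of this proposition at all: it is simply quoted from Sakai \cite{Sa83}, so there is nothing to compare your argument with line by line. What you have written is a correct, essentially self-contained proof along the standard lines (and, as far as I can tell, close in spirit to Sakai's original one): the Zariski decomposition reduces $\kappa\inv(S)$ to the Iitaka dimension of the nef part $P$, Riemann--Roch plus nefness gives quadratic growth of $h^0(kP_0)$ when $P_0^2>0$ (the $h^2$-vanishing via intersection with $P_0$ is exactly right), and $P_0^2=0$ caps the growth at linear, hence $\kappa(S,P)\le 1$. One point deserves slightly more care than you give it: Proposition \ref{prop:Z1} is stated in the paper for an \emph{effective} divisor $C$, whereas under the hypothesis $\kappa\inv(S)\ge 0$ the divisor $K_S\inv$ is only guaranteed to be pseudo-effective (Sakai's Lemma 3.1), so $|K_S\inv|$ itself may be empty. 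This is harmless --- either invoke Fujita's version of the decomposition and of the fixed-component statement for pseudo-effective divisors (the paper records that $N$ remains effective there), or first replace $K_S\inv$ by an effective multiple $m_1K_S\inv$, whose Zariski decomposition is $m_1P+m_1N$, and note $\kappa(S,K_S\inv)=\kappa(S,m_1K_S\inv)$ --- but it should be said. The rounding issues you flag at the end are indeed controlled exactly as you indicate, by working with the fixed integral divisor $P_0=m_0P$ and bounding the correction terms independently of $k$.
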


In particular we have $d(S)=0$ if (and only if) $\kappa\inv(S)\in \{0,1\}$.
In the rest of this section
we focus on rational surfaces with $d(S)=0$ which admit a special
type of anti-canonical divisor. 
For this purpose, by a {\em cycle of rational curves},  we mean
either a rational curve with one node, or
 a connected, reduced, normal crossing divisor 
\begin{align}\label{cycle0}
C = C_1 + \dots + C_k
\end{align}
on $S$,
with all $C_i$ being  non-singular rational curves,
such that the dual graph of $C$ is a circle.
For simplicity, in the expression 
\eqref{cycle0}, we allow $k$ to be one, in which case $C=C_1$ means 
a rational curve with one node.
We always assume that if $k\ge 2$ in \eqref{cycle0}
the components $C_i$ and $C_{i+1}$ intersect and
$C_{k+1} = C_1$.
By an {\em anti-canonical cycle} we mean a cycle of rational curves
which belongs to the anti-canonical class.

The following well-known lemma will be used frequently.
\begin{lemma}\label{lem:negdef}(\cite[p.\ 28, Lemma]{BoMu})
  Let $S$ be a smooth surface, $C_{1},\ldots,C_{k}$ irreducible curves on $S$
  for which $\sum C_{i}$ is a connected curve, $p_{i}>0$ rational numbers so
  that $P=\sum_{i=1}^{k} p_{i}C_{i}$ satisfies $PC_{j}=0$ for $j=1,\ldots,k$.

  Then, the intersection matrix $\left(C_{i}C_{j}\right)_{1\le i,j\le k}$ is
  negative semi-definite with one-dimensional kernel generated by $P$. More
  precisely, this means that, for $D=\sum_{i=1}^{k} r_{i}C_{i}$ with
  $r_{i}\in \mathbb{Q}$ we always have $D^{2}\le 0$ and $D^{2}=0$ occurs iff
  $D=rP$ for some $r\in\mathbb{Q}$.
\end{lemma}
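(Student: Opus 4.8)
The plan is to reduce the whole statement to a single algebraic identity obtained by rescaling the coefficients so that $P$ becomes the ``all-ones'' vector. Given an arbitrary $\QQ$-divisor $D=\sum_{i=1}^k r_iC_i$, I would write $r_i=t_ip_i$ with $t_i=r_i/p_i\in\QQ$ --- legitimate because each $p_i>0$ --- so that $D^2=\sum_{i,j}t_it_jp_ip_j(C_iC_j)$. Applying the elementary identity $2t_it_j=t_i^2+t_j^2-(t_i-t_j)^2$ term by term, I would split $2D^2$ into $\sum_{i,j}t_i^2p_ip_j(C_iC_j)+\sum_{i,j}t_j^2p_ip_j(C_iC_j)-\sum_{i,j}(t_i-t_j)^2p_ip_j(C_iC_j)$. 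In the first of these, performing the $j$-summation first gives $\sum_i t_i^2p_i(PC_i)$, which vanishes by hypothesis; the second vanishes by symmetry. Hence one is left with the key formula
\begin{align*}
2D^2 \;=\; -\sum_{i,j}(t_i-t_j)^2p_ip_j(C_iC_j) \;=\; -\sum_{i\neq j}(t_i-t_j)^2p_ip_j(C_iC_j).
\end{align*}

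From here the inequality $D^2\le 0$ is immediate: for $i\neq j$ the curves $C_i,C_j$ are distinct irreducible curves on a smooth surface, so they meet properly and $C_iC_j\ge 0$, while $(t_i-t_j)^2\ge 0$ and $p_ip_j>0$; thus every summand on the right is $\le 0$. (The diagonal entries $C_i^2$ may have any sign, but they never enter this formula, which is exactly the point of the rescaling.) This proves that $(C_iC_j)$ is negative semi-definite, and, taking $D=P$, that $P^2=\sum_ip_i(PC_i)=0$, so the coefficient vector of $P$ lies in $\ker(C_iC_j)$.

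For the equality case I would argue as follows. If $D^2=0$, then every summand above vanishes, so $t_i=t_j$ whenever $C_iC_j>0$, i.e.\ whenever the components $C_i$ and $C_j$ intersect. Because $\sum_{i=1}^k C_i$ is connected, the graph on vertex set $\{1,\dots,k\}$ having an edge between $i$ and $j$ precisely when $C_iC_j>0$ is connected; hence $i\mapsto t_i$ is constant, say $t_i\equiv t$, and $D=t\sum p_iC_i=tP$, so $D=rP$ with $r=t$. The converse is trivial since $P^2=0$. This also yields the kernel statement at once: a coefficient vector $(v_i)$ in $\ker(C_iC_j)$ produces a divisor $D=\sum v_iC_i$ with $DC_j=0$ for all $j$, hence $D^2=\sum_i v_i(DC_i)=0$, so $D\in\QQ P$ by what we just proved; and $(p_1,\dots,p_k)\neq 0$ lies in the kernel by hypothesis, so the kernel is exactly the line it spans.

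I do not expect a serious obstacle here: the proof is essentially the observation that the substitution $r_i=t_ip_i$ converts the quadratic form $D^2$ into the weighted sum of squared differences $-\tfrac12\sum_{i\neq j}p_ip_j(C_iC_j)(t_i-t_j)^2$. The only two points deserving a word of care, which I would spell out, are the implication ``connectedness of the curve $\sum C_i$ $\Rightarrow$ connectedness of the dual intersection graph'', and the remark that distinctness and irreducibility of the $C_i$ is precisely what guarantees $C_iC_j\ge 0$ for $i\neq j$, so that the sign of the right-hand side above is controlled.
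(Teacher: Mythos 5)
Your proof is correct and complete: the substitution $r_i=t_ip_i$ turning $2D^2$ into $-\sum_{i\ne j}p_ip_j(C_iC_j)(t_i-t_j)^2$, the sign argument via $C_iC_j\ge 0$ for distinct irreducible curves, and the connectedness argument for the equality case are all sound. The paper itself gives no proof of this lemma, only the citation to Bombieri--Mumford, and your argument is exactly the classical one from that source, so there is nothing to compare beyond noting that you have supplied the standard proof the authors chose to omit.
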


For the rest of this section, let $S$ be a non-singular rational surface and
suppose that $S$ has an anti-canonical cycle $C$ as in \eqref{cycle0} and let
$C=P+N$ be its Zariski decomposition. 
Let $m_0>0$ be the smallest positive integer for which $m_0P$ is integral, 
and write  
\begin{equation}\label{eq:mzero}
  m_0P = l_1 C_1 + \dots + l_k C_k,\quad (l_i\in\ZZ_{\ge 0}).
\end{equation}

\begin{lemma}\label{lem:Pzero}
  \begin{enumerate}
  \item[\em (i)] If $P=0$ then $\kappa\inv(S)=0$.
  \item[\em (ii)] $P^{2}=0$ if and only if $P C_i=0$ for $1\le i\le k$.
  \end{enumerate}
\end{lemma}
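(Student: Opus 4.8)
The plan is to prove the two parts more or less separately, using only the defining properties (i)--(iii) of the Zariski decomposition together with Proposition \ref{prop:Z1}.

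For part (i) I would argue that the hypothesis $P=0$ makes every pluri-anti-canonical system rigid. Indeed, if $P=0$ then $C=N$ is integral, so for any integer $m>0$ the Zariski decomposition of $mC$ is simply $0+mC$, and Proposition \ref{prop:Z1} gives $|mK\inv|=|mC|=\big|\lfloor mP\rfloor\big|+\lceil mN\rceil=|0|+mC$. Since $|0|$ consists only of the zero divisor, this says that $|mC|$ is the single divisor $mC$; hence $h^0(mK\inv)=1$ and the $m$-th anti-canonical map contracts $S$ to a point. As this holds for all $m>0$ and $|K\inv|\ne\emptyset$ (it contains $C$), I conclude $\kappa\inv(S)=\max_{m>0}\dim\phi_m(S)=0$.

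For part (ii) the easy implication is immediate: since $C=K_S\inv$ is the effective, reduced divisor $C_1+\dots+C_k$, both summands of its Zariski decomposition are supported on the $C_i$, say $P=\sum_i p_iC_i$ and $N=\sum_i n_iC_i$ with $p_i,n_i\in\QQ_{\ge 0}$ and $p_i+n_i=1$; then $PC_i=0$ for all $i$ gives $P^2=P\cdot\sum_i p_iC_i=\sum_i p_i(PC_i)=0$. For the converse I would use that $P$ is nef, so $PC_i\ge 0$ for every $i$; hence $0=P^2=\sum_i p_i(PC_i)$ is a sum of non-negative terms and each vanishes, which already forces $PC_i=0$ whenever $p_i>0$. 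For an index $i$ with $p_i=0$ one has $n_i=1$, so $N\ne 0$ and $C_i$ is a component of $N$, and property (iii) of the Zariski decomposition gives $PC_i=0$ there too. (The degenerate case $k=1$, where $C$ is a nodal rational curve, is covered by the same one-term computation.)

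I do not anticipate a real obstacle, as both parts are short. The only point needing a little care is the bookkeeping in the converse of (ii): one must check that each component $C_i$ not occurring in $P$ genuinely occurs in $N$, so that property (iii) of the Zariski decomposition applies, and this is exactly guaranteed by the normalisation $p_i+n_i=1$ coming from $C$ being reduced.
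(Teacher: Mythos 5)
Your proof is correct and follows essentially the same route as the paper's: part (i) via Proposition \ref{prop:Z1} applied to $mC = 0 + mC$, and the converse in (ii) by splitting the components into those appearing in $N$ (where property (iii) applies) and those with full coefficient in $P$ (where nefness plus $P^2=0$ forces $PC_i=0$). The only cosmetic difference is that you normalise coefficients as $p_i+n_i=1$ while the paper clears denominators with $m_0P=\sum l_iC_i$; the content is identical.
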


\proof
(i) If $P=0$, the nef part of the Zariski
decomposition for $mC$ is $0$ for any $m>0$. By Proposition \ref{prop:Z1}
this means that $h^0(mK\inv) = 1$ for all $m>0$, hence $\kappa\inv(S)=0$. 

(ii) From \eqref{eq:mzero} it is clear that $P C_i=0$ for $1\le i\le k$
implies $P^{2}=0$. 

For the converse, if an index $i$ satisfies $l_i<m_0$ in \eqref{eq:mzero}, 
then $N$ includes $C_i$, and so $P C_i=0$ from the property (iii)
of the Zariski decomposition. Therefore we have
\begin{align}\label{Z5}
  (m_0P)^2 &= m_0P \Big(\sum_{1\le i\le k}l_iC_i\Big)
  = m_0 \Big(\sum_{1\le i\le k}l_iP  C_i\Big)
  = m_0^2\sum_{l_i=m_0}P  C_i.
\end{align}
Further, as $P$ is nef, we have $P C_i\ge 0$ for any $i$.
Because we assume $P^2 =0$, \eqref{Z5} implies that $PC_i=0$
even when $l_i=m_0$.
\proofend

\vsp
By $\Pic^0(C)$ we denote the group of line bundles on the anti-canonical
cycle $C$ that are trivial on each component $C_{i}$.
Thus, part (ii) of Lemma \ref{lem:Pzero} says that $P^{2}=0$ is equivalent to
$m_0P|_C\in\Pic^0(C)$.

\begin{lemma}\label{lem:blow-down}
  Let $\pi:S\to \ol S$ be the blow-down of a $(-1)$-curve on $S$, and let
  $\ol C:=\pi(C)$ be the image of the anti-canonical
  cycle $C$, and $x\in \ol S$ the image of 
  the $(-1)$-curve.
  Define $m=\mult_{x}\ol C$. Then,
  \begin{enumerate}
  \item [\em (i)] $\ol C$ is an anti-canonical cycle on $\ol S$ and
    $m\in\{1,2\}$;
  \item [\em (ii)] if $\ol C=\ol N + \ol P$ is the Zariski
    decomposition of $\ol C$ and $m=2$, then $P=0$ iff $\ol P=0$.
  \end{enumerate}
  
\end{lemma}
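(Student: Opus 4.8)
The plan is to distinguish the two cases $E\subseteq C$ and $E\not\subseteq C$, where $E$ is the contracted $(-1)$-curve, and to show that these are precisely the cases $m=2$ and $m=1$. I would start from $K_{S}=\pi^{*}K_{\ol S}+E$, which gives $\pi^{*}(K_{\ol S}^{-1})=C+E$ and, after applying $\pi_{*}$ (using $\pi_{*}E=0$ and $\pi_{*}\pi^{*}=\id$), $\pi_{*}C=K_{\ol S}^{-1}$. Since $\pi$ contracts only $E$ one has $\pi_{*}C=\ol C$, so $\ol C\in|K_{\ol S}^{-1}|$ in both cases (and $\ol S$ is again a non-singular rational surface, so the earlier results of this section apply to it). Writing the total transform of $\ol C$ as $\widetilde{\ol C}+mE$ with $\widetilde{\ol C}$ the strict transform, comparing it with $\pi^{*}\ol C=\pi^{*}\pi_{*}C$, and using the identity $\pi^{*}\pi_{*}D=D+(D\!\cdot\!E)E$ valid for any $D$ having no $E$-component, one obtains $m=C\!\cdot\!E=1$ when $E\not\subseteq C$ and $m=(C-E)\!\cdot\!E=2$ when $E\subseteq C$; here $C\!\cdot\!E=1$ because $\pi^{*}(K_{\ol S}^{-1})\!\cdot\!E=0$ and $-K_{S}=\pi^{*}(K_{\ol S}^{-1})-E$. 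In particular $m\in\{1,2\}$.

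To complete (i) I would verify that $\ol C$ is again a cycle of rational curves. It is connected and reduced because every coefficient of $\pi_{*}C=\ol C$ equals $1$, and each of its components $\pi(C_{i})$ (for $C_{i}\neq E$) is the image of a rational curve, hence rational. For the dual graph: when $m=1$, $E$ meets $C$ transversally at a single point lying on one component $C_{i}$, so $\pi|_{C}$ is an isomorphism onto $\ol C$ and the cycle is unchanged; when $m=2$, contracting $E=C_{j}$ forces its two neighbours $C_{j-1},C_{j+1}$ to meet transversally at $x$, so a $k$-cycle becomes a $(k-1)$-cycle, and in the extreme case $k=2$ this $(k-1)$-cycle is an irreducible nodal rational curve. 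The normal-crossing property at $x$, and the transversality statements just used, follow from the standard local behaviour of the blow-down of a $(-1)$-curve (the image of a curve meeting $E$ transversally is smooth at $x$, and two such images, or one image meeting $E$ twice, acquire only an ordinary node at $x$).

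For (ii) we are in the case $m=2$, so $E$ is one of the components; relabel so that $E=C_{k}$. The reduction I would use is that for an anti-canonical cycle the vanishing of the nef part is a linear-algebra condition: $P=0$ if and only if the intersection matrix $M:=(C_{i}C_{i'})_{1\le i,i'\le k}$ is negative definite. (If $P=0$ then $C=N\neq0$ and negative-definiteness of $M$ is property (ii) of the Zariski decomposition; conversely, if $M$ is negative definite then $(P,N)=(0,C)$ satisfies all three axioms and hence is the Zariski decomposition by uniqueness.) By (i) the same statement holds on $\ol S$: $\ol P=0$ iff $\ol M:=(\ol C_{i}\ol C_{i'})_{1\le i,i'\le k-1}$ is negative definite, where $\ol C_{i}:=\pi(C_{i})$. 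Now, using $\pi^{*}\ol C_{i}=C_{i}+(C_{i}\!\cdot\!E)E$, the projection formula, and $E^{2}=-1$, a short computation gives $\ol C_{i}\ol C_{i'}=C_{i}C_{i'}+(C_{i}\!\cdot\!E)(C_{i'}\!\cdot\!E)$, which exhibits a congruence $M\cong\ol M\oplus(-1)$ (clear the $k$-th row and column of $M$ against the diagonal entry $C_{k}^{2}=E^{2}=-1$). As the $1\times 1$ block $(-1)$ is negative definite, $M$ is negative definite iff $\ol M$ is; hence $P=0$ iff $\ol P=0$.

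I expect the one genuinely conceptual step to be the reformulation in (ii): rather than trying to compare the Zariski decompositions of $C$ and $\ol C$ directly, one translates the vanishing of the nef part into negative-definiteness of the intersection matrix and then uses the congruence $M\cong\ol M\oplus(-1)$ — and it is exactly here that the hypothesis $m=2$ (equivalently, that $E$ occurs as a component of $C$, so that $E^{2}$ becomes a diagonal entry of $M$) enters. The remaining points — the computation of $\ol C_{i}\ol C_{i'}$ and the local facts about blow-downs used in (i) — are routine and I would not dwell on them.
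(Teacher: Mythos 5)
Your proposal is correct and follows essentially the same route as the paper: part (i) via the strict/total transform of $\ol C$ and $K_S^{-1}=\pi^*K_{\ol S}^{-1}-E$ together with $CE=1$, and part (ii) by reducing "$P=0$" to negative definiteness of the intersection matrix and showing $M$ and $\ol M$ are congruent up to a $(-1)$ block (the paper does this by explicit column operations and Sylvester's criterion, you by the equivalent Schur-complement computation $\ol C_i\ol C_{i'}=C_iC_{i'}+(C_iE)(C_{i'}E)$).
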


\begin{proof}
  (i)
  Let $E\subset S$ be the exceptional curve of the blow-up $\pi$. As
  $EC=EK\inv=1$, either $E$ is a component of $C$ or it intersects $C$
  transversally at one point. Let $C'$ be the strict transform of $\ol
  C=\pi(C)$, then $C'=C$ if $E$ is not a component of $C$, or $C'=C-E$ if $E$
  is a component of $C$. In both cases, $\ol C$ clearly is a cycle of rational
  curves. 
  Because $m=C'E$ and $EC=1$, we obtain $m=1$ if $E$ is not a component
  of $C$, and $m=2$ if $E$ is a component of $C$. In particular,
  $C=C'+(m-1)E$ in both cases. Because $\pi^{\ast}\ol C=C'+mE$ and
  $K\inv=\pi^{\ast}K_{\ol S}\inv -E$, we now obtain $\ol C\in|K_{\ol S}\inv|$.

  (ii) As $m=2$, the exceptional curve $E$ is a component of $C$. We fix
  notation so that $C_{k}=E$ and $\ol C=\ol C_{1}+\cdots+\ol C_{k-1}$, where
  $\ol C_{i}=\pi(C_{i})$. Consider the intersection matrix
  $\ol M=\left(\ol C_{i}\ol C_{j}\right)_{1\le i,j\le k-1}$.
  The intersection matrix
  $M=\left(C_{i}C_{j}\right)_{1\le i,j\le k}$ is obtained from $\ol M$ by adding
  an extra row and a column with $C_{1}C_{k}=C_{k-1}C_{k}=1$ and $C_{k}^{2}=-1$
  being their only non-zero entries. In addition, four entries in $\ol M$ have
  to be changed to get $M$, namely $C_{1}^{2}=\ol C_{1}^{2}-1$,
  $C_{k-1}^{2}=\ol C_{k-1}^{2}-1$ and $C_{1}C_{k-1}=0$, whereas
  $\ol C_{1}\ol C_{k-1}=1$.

  By adding the $k$-th column of $M$ to columns $1$
  and $k-1$, the values of the entries of the part of $M$ that corresponds to
  $\ol M$ are restored to the original values they had in $\ol M$. It now
  follows easily from Sylvester's criterion that the matrix $M$ is negative
  definite if and only if $\ol M$ is so.
  Because $\ol P=0$ is equivalent to $\ol M$ being negative definite and
  $P=0$ if and only if $M$ is negative definite, the claim now follows. 
\end{proof}

We will frequently need the following more detailed properties of the Zariski
decomposition $C=P+N$ of the anti-canonical cycle $C$ on the surface $S$.

\begin{proposition}\label{prop:Z2}
  Suppose that $P\ne 0$ and $P^{2}=0$.
\begin{enumerate}
\item[\em (i)] 
  We have $l_i>0$ for all $i$.
\item[\em (ii)]
  If $\pi:\widetilde{S}\to S$ is the blow-up of a point $x\in C$ and
  $m=\mult_{x}C$, then
  \begin{align*}
    \kappa\inv(\widetilde{S}) &= \kappa\inv(S)&\text{ if } m=2\phantom{.}\\
    \kappa\inv(\widetilde{S}) &=0             &\text{ if } m=1.
  \end{align*}
\item[\em (iii)]
  We have $l_i=1$ for some index $i$.
\item[\em (iv)]
  If $K^2<0$, then $m_0>1$, $k\ge 2$, and 
  we have $l_i\neq l_j$ for some indices $i$ and $j$.
\end{enumerate}
\end{proposition}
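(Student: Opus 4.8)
The plan is to prove the four statements essentially in the stated order, exploiting throughout that, by Lemma \ref{lem:Pzero}(ii), the standing hypothesis $P^{2}=0$ is the same as $PC_{i}=0$ for every $i$. First I would do (i), since it unlocks Lemma \ref{lem:negdef}. If some $l_{i}=0$, set $J=\{i:l_{i}=0\}$; then $J\neq\{1,\dots,k\}$ because $P\neq 0$, and since $C$ is connected there are $j\in J$, $i\notin J$ with $C_{i}C_{j}>0$, so $0=(m_{0}P)C_{j}=\sum_{t\notin J}l_{t}(C_{t}C_{j})\ge l_{i}(C_{i}C_{j})>0$, a contradiction. Hence $l_{i}>0$ for all $i$, and Lemma \ref{lem:negdef} applies to $m_{0}P=\sum l_{i}C_{i}$: the matrix $(C_{i}C_{j})$ is negative semi-definite with one-dimensional kernel spanned by $(l_{1},\dots,l_{k})$, so every $\QQ$-divisor supported on $C$ has non-positive square, vanishing only on $\QQ P$. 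I will also use that $\gcd(l_{1},\dots,l_{k},m_{0})=1$, by minimality of $m_{0}$.

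For (ii): $m=\mult_{x}C\in\{1,2\}$ because $C$ is reduced with normal crossings, and $\kappa\inv(\widetilde S)\le\kappa\inv(S)$ always, a section of $-\nu K_{\widetilde S}$ being a section of $-\nu K_{S}$ vanishing to order $\ge\nu$ at $x$. If $m=1$, then $-K_{\widetilde S}$ is represented by the strict transform of $C$, whose intersection matrix is $(C_{i}C_{j})$ with one diagonal entry lowered by $1$; since the kernel vector of $(C_{i}C_{j})$ has all entries positive this perturbed matrix is negative \emph{definite}, so the nef part of the Zariski decomposition of $-K_{\widetilde S}$ is $0$ and $\kappa\inv(\widetilde S)=0$ by Lemma \ref{lem:Pzero}(i). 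If $m=2$, then $\pi$ contracts the $(-1)$-curve $E$ and $\mult_{x}C=2$, so the nef part $\widetilde P$ of $-K_{\widetilde S}$ is nonzero by Lemma \ref{lem:blow-down}(ii); since $\widetilde P^{2}>0$ would give $\kappa\inv(\widetilde S)=2$ (Proposition \ref{prop:degree}) against $\kappa\inv(\widetilde S)\le\kappa\inv(S)\le 1$, we get $\widetilde P^{2}=0$. Thus both surfaces have anti-Kodaira dimension in $\{0,1\}$; when $\kappa\inv(S)=0$ we are done, and when $\kappa\inv(S)=1$ I would produce a positive-dimensional family in $|-\nu K_{\widetilde S}|$ for suitable large $\nu$ by stripping the fixed part $\nu N$ (Proposition \ref{prop:Z1}) and using that the special divisor $m_{0}P=\sum l_{i}C_{i}$ has multiplicity $l_{1}+l_{2}$ at the node $x$, so repeated copies of $m_{0}P$ can absorb the required vanishing; this forces $\kappa\inv(\widetilde S)\ge 1$.

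Statement (iv) is quick: $PC_{i}=0$ gives $PN=0$, hence $K^{2}=C^{2}=(P+N)^{2}=N^{2}$, so $K^{2}<0\iff N\neq 0$. If $m_{0}=1$ then $l_{i}=1$ for all $i$ (by (i) and $l_{i}\le m_{0}$), whence $N=0$; so $m_{0}>1$. If $k=1$ then $P=(l_{1}/m_{0})C_{1}$ and $PC_{1}=0$ force $C_{1}^{2}=0$, whence $K^{2}=0$; so $k\ge 2$. If the $l_{i}$ were all equal, $P$ would be a rational multiple of $C$ and $P^{2}=0$ would give $C^{2}=K^{2}=0$; so $l_{i}\neq l_{j}$ for some $i,j$. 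The serious work is (iii), which I would prove by strong induction on $k$, by contradiction: suppose $l_{i}\ge 2$ for all $i$, so $m_{0}\ge 2$; by the previous argument the $l_{i}$ are not all equal. Writing $a_{i}=-C_{i}^{2}$ (each $\ge 1$, since $C_{i}^{2}\ge 0$ would make $(m_{0}P)C_{i}>0$), the identity $0=(m_{0}P)\cdot C=\sum_i l_i(C\cdot C_i)$ together with $C\cdot C_{i}=2-a_{i}$ gives $\sum_{i}(a_{i}-2)l_{i}=0$, so some $a_{j}=1$: $C$ contains a $(-1)$-curve $C_{j}$, with $l_{j}=l_{j-1}+l_{j+1}$. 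The cases $k=1,2$ follow by direct computation, using that the negative part $N$ must be negative definite. For $k\ge 3$ I contract $C_{j}$, $\pi\colon S\to\ol S$; then $\ol C:=\pi(C)$ is an anti-canonical cycle on $\ol S$ with $k-1$ components, $\pi_{*}P$ is nef with $(\pi_{*}P)^{2}=P^{2}=0$ and orthogonal to every component of $\ol C$, and the nef part of $\ol C$ differs from $\pi_{*}P$ by an effective divisor $R$ supported on $\ol C$ with $R\cdot\pi_{*}P=0$, so $R^{2}\le 0$ by Lemma \ref{lem:negdef} and hence the nef part of $\ol C$ has square $R^{2}\le 0$, i.e.\ square $0$; it is nonzero by Lemma \ref{lem:blow-down}(ii). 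Thus $\ol S$ again satisfies the hypotheses, and by induction some coefficient of $\ol m_{0}\cdot(\text{nef part of }\ol C)$ equals $1$. Since that divisor is proportional to $\sum_{t\neq j}l_{t}\ol C_{t}$, the ratios of the $l_{t}$ ($t\neq j$) are preserved, so $\min_{t\neq j}l_{t}$ divides every $l_{t}$ ($t\neq j$), hence also $l_{j}=l_{j-1}+l_{j+1}$, and a short bookkeeping with $\ol m_{0}$ shows it also divides $m_{0}$; by $\gcd(l_{1},\dots,l_{k},m_{0})=1$ this minimum is $1$, contradicting $l_{i}\ge 2$ for all $i$.

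The main obstacle is the inductive step of (iii): keeping the hypothesis $\ol P^{2}=0$ alive after contracting $C_{j}$. What makes it work is exactly that $\pi_{*}P$ is a positive combination of \emph{all} components of $\ol C$ with square $0$ and orthogonal to each of them, so that Lemma \ref{lem:negdef} pins the square of the nef part of $\ol C$ to be $\le 0$, hence $0$; getting this comparison precise (together with the bookkeeping relating $\ol m_{0}$ to $m_{0}$ via the primitivity of the kernel vectors) is the delicate part. A secondary technical point is the case $l_{1}+l_{2}>m_{0}$ in (ii): there $\pi^{*}P$ is not the nef part of $-K_{\widetilde S}$, so instead of identifying Zariski decompositions one must run the "soaking up" construction with multiples of $m_{0}P$.
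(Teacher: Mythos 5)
Your proposal is correct, but for the two substantial items it takes a genuinely different route from the paper. For (iii) the paper invokes Sakai's structure theorem: it blows $S$ down to a model $S_{0}$ on which the anti-canonical cycle is nef, observes that each intermediate centre must be a node of the cycle, and then propagates the coefficients back up through the blow-ups, obtaining the explicit formula $m_{0}P=\sum l_{i}\widetilde C_{i}$ with $l_{k+1}=l_{1}+l_{k}$ at each step (formula \eqref{ZD1}); statement (iii) is read off from this formula. You instead argue by contradiction and descending induction on $k$: the identity $\sum_{i}(a_{i}-2)l_{i}=0$ produces a $(-1)$-curve $C_{j}$ inside the cycle with $l_{j}=l_{j-1}+l_{j+1}$, you contract it, and you use $\pi^{*}\pi_{*}(m_{0}P)=m_{0}P$ together with Lemma \ref{lem:negdef} to keep the hypotheses $\ol P\ne 0$, $\ol P^{2}=0$ alive downstairs. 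This is self-contained (no appeal to \cite[Theorem 3.4]{Sa84}) and is essentially the paper's argument run in the opposite direction; what it does not deliver is the explicit expression \eqref{ZD1}, which the paper reuses in its proof of (iv) and again in Section 4 — your independent numerical derivation of (iv) correctly compensates for this. Similarly, for (ii) with $m=2$ the paper simply cites \cite[Lemma 5.1]{Kr99}, whereas you reprove it by soaking up the required vanishing at the node with copies of $m_{0}P+m_{0}N$; this works since the fixed part $(\mu-\nu)m_{0}P+\mu m_{0}N$ has multiplicity $2\mu m_{0}-\nu(l_{1}+l_{2})\ge\mu m_{0}$ at the node for $\mu\ge 2\nu$.

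Two small points to tighten. First, in the inductive step of (iii) the divisor $R=\ol P-\pi_{*}P$ need not be effective (it equals $\pi_{*}N-\ol N$); but this does not matter: $\ol P$ is an effective $\QQ$-divisor supported on the components of $\ol C$, so $\ol P^{2}\le 0$ by Lemma \ref{lem:negdef} applied with $\pi_{*}(m_{0}P)$, while nefness and effectiveness give $\ol P^{2}\ge 0$, hence $\ol P^{2}=0$ and $\ol P\in\QQ\,\pi_{*}P$. Second, your closing divisibility step needs the observation that some $l_{i}$ equals $m_{0}$ (otherwise every $C_{i}$ would lie in $N$ and negative definiteness would contradict $P^{2}=0$); with that, $\min_{t\ne j}l_{t}$ divides every $l_{t}$, divides $l_{j}=l_{j-1}+l_{j+1}$, and divides $m_{0}$, so it equals $1$ by the primitivity of $(l_{1},\dots,l_{k},m_{0})$, completing the contradiction.
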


\proof 
For (i),
if $l_i=0$ for some $i$,
we have $m_{0}PC_i = l_{i-1} + l_{i+1}$ since $C$ is a cycle.
But as $P^{2}=0$, Lemma \ref{lem:Pzero} (ii) shows that $PC_i=0$ for all $i$,
hence $l_{i-1} = l_{i+1}=0$.
Repeating this argument, we obtain $l_i=0$ for any $i$, 
but this contradicts $P\neq0$.

To prove (ii) we recall from \cite[Lemma 5.1]{Kr99} that
$\kappa\inv(\widetilde{S}) \le \kappa\inv(S) = 1$ with equality 
if $m\ge2$. Because $C$ is a cycle of rational curves, the points on $C$ have
multiplicity $1$ or $2$. We assume now $m=1$
and show $\kappa\inv(\widetilde S)=0$. 
From  (i), Lemma \ref{lem:Pzero} (ii) and Lemma \ref{lem:negdef},
for any divisor $D=\sum_{i=1}^{k}n_{i}C_{i}$,
we have $D^{2}\le 0$ with equality only if $D=rP$ for some $r\in\mathbb{Q}$.

Let $\widetilde{C}_{i}$ be the strict transform of $C_{i}$ and
$\widetilde{C} = \sum_{i=1}^{k}\widetilde{C}_{i}$. Because we assumed $m=1$,
$\widetilde{C}$ is an anti-canonical cycle on $\widetilde{S}$. We now show that
the intersection matrix of the components of $\widetilde{C}$ is negative
definite. To fix notation, we assume $x\in C_{1}$, so that
$\widetilde{C}_{1}^{2} = C_{1}^{2}-1$. Consider
$\widetilde{D}=\sum_{i=1}^{k} n_{i}\widetilde{C}_{i}$ and let
$D=\sum_{i=1}^{k} n_{i}C_{i}$, then $\widetilde{D}^{2} = D^{2}-n_{1}^{2}\le
D^{2} \le 0$. If $\widetilde{D}^{2} = 0$ we then have $D^{2}=0$ and $n_{1}=0$,
hence $D=rP=\sum_{i=1}^{k} rl_{i}C_{i}$ for some $r\in \QQ$ and so 
$rl_{1}=n_{1}=0$. As $l_{1}\ne 0$ this implies $r=0$, hence $D=0$, which gives
$\widetilde{D}=0$. This shows that the intersection matrix
$(\widetilde{C}_{i}\widetilde{C}_{j})_{1\le i,j\le k}$ is negative
definite. This implies that $\widetilde{P}=0$ in the Zariski decomposition
$\widetilde{C}=\widetilde{P}+\widetilde{N}$of the anti-canonical divisor
$\widetilde{C}$, 
hence $\kappa\inv(\widetilde{S})=0$ by Lemma \ref{lem:Pzero} (i). 

To prove (iii), we recall from 
\cite[Theorem 3.4]{Sa84} 
that there is a birational morphism
$\vphi:S\to S_{0}$ with $S_{0}$ being non-singular, such that 
$\kappa\inv(S_{0}) = \kappa\inv(S)$ and any anti-canonical divisor
$C_{0}\in|K_{S_{0}}\inv|$ is nef, that is $N_{0}=0$
in the Zariski decomposition $C_{0} = P_{0}+N_{0}$ of $C_{0}$. 
Because $P^{2}=0$ and $\kappa\inv(S_{0}) = \kappa\inv(S)$, 
Proposition \ref{prop:degree} implies that 
$K_{S_{0}}^2 = P_{0}^{2} = 0$ as well.

The morphism $\vphi$ is a composition of blow-ups. By Lemma
\ref{lem:blow-down} (i) the image of $C$ in each of these partial blow-ups is
an anti-canonical cycle containing the blown-up point. Moreover, as we have
seen in the proof of (ii) above, if the blown-up point had multiplicity one on
the anti-canonical cycle, the nef part of the Zariski decomposition would
vanish after the blow-up. Because of Lemma \ref{lem:blow-down} (ii), this
would lead to $P=0$ on $S$, in contradiction to our assumption. Therefore, it
follows that at each step a double point of the anti-canonical cycle is
blown-up and the nef part of the Zariski decomposition does not vanish. 

Because the anti-Kodaira dimension does not increase under blow-up,
\cite[Lemma 5.1]{Kr99}, and $\kappa\inv(S_{0}) = \kappa\inv(S)$, all
partial blow-ups in this process have $\kappa\inv=\kappa\inv(S)$, in
particular the self-intersection number of the nef part of the Zariski
decomposition is equal to zero.

From the above we infer that $C_{0}:=\vphi(C)$ is an anti-canonical cycle on
$S_{0}$ which coincides with the nef part, $P_{0}$, of its Zariski
decomposition. Thus, $m_{0}=1$ and all $l_{i}=1$ on $S_{0}$, and so the
assertion (iii) holds for the pair $(S_{0}, C_{0})$. 
To finish the proof of (iii) by induction, it remains to show that, 
if $(S, C)$ is a pair consisting of a rational surface and an
anti-canonical cycle on it that satisfies the assumptions of the current
proposition and has $l_{i}=1$ for one index $i$,
and if $\pi:\widetilde{S}\to S$ is the blow-up at a double point of $C$, then
the nef part of the new anti-canonical cycle $\widetilde{C}$ on
$\widetilde{S}$ also satisfies (iii).

To show this, we continue to use the notation introduced above, like
\eqref{cycle0}, for $S,C$ and $P$. To fix notation, we suppose that the point
$C_{k}\cap C_{1}$ is blown up by $\pi$. 
Let $\widetilde{C}_{k+1}$ be the exceptional divisor  of $\pi$ and
write $\widetilde{C}_i$ for the strict transform of the component $C_i$ ($1\le
i\le k$). 
We recall that $l_{i}>0$ ($1\le i\le k$), define
$l_{k+1}=l_{1}+l_{k}$ and consider the divisor 
$\widetilde{D}$ on $\widetilde{S}$
\begin{align}\label{ZD1}
\widetilde{D} = \sum_{i=1}^{k+1} l_i \widetilde{C}_i\,.
\end{align}
It is easy to see that $\widetilde{D}\widetilde{C}_i=0$ for $i=1,\ldots,k+1$,
and therefore $\widetilde{D}$ is nef and $\widetilde{D}^2=0$.
Hence, again by Lemma \ref{lem:negdef}, the intersection matrix
$(\widetilde{C}_i\widetilde{C}_j)_{1\le i\le k+1}$ is negative semi-definite,
and if $\widetilde{P}$ is the nef part of $\widetilde{C}
= \sum\widetilde{C}_i$ and
$\widetilde{m}_0$ is the smallest positive integer  for which
$\widetilde{m}_0\widetilde{P}$ is integral, we have  
$\widetilde{m}_0\widetilde{P} = r\widetilde{D}$ for some $r\in \QQ_{>0}$.
Since $l_i=1$ for some index $i$ ($i\le k$) by the inductive assumption,
it follows that $r\in\ZZ_{>0}$.

If $r\neq 1$, all the coefficients of $\widetilde{m}_0\widetilde{P}$ would be
divisible by $r$.
If all $\widetilde{C}_{i}$ appeared in the negative part, $\widetilde{N}$,
of the Zariski decomposition of $\widetilde{C}$, 
condition (ii) in the definition of the Zariski decomposition would imply 
$\widetilde{P}^{2}<0$. Thus, $\widetilde{P}^{2}=0$ implies that at least one of
the $\widetilde{C}_{i}$ is missing in $\widetilde{N}$.
Hence, at least one of the
coefficients of $\widetilde{m}_0\widetilde{P}$ is equal to
$\widetilde{m}_0$. But then $r$ would divide $\widetilde{m}_0$ and all
coefficients of $\widetilde{m}_0\widetilde{P}$, which contradicts the choice
of $\widetilde{m}_0$.
This shows that $r=1$, and we obtain $\widetilde{m}_0\widetilde{P} =
\widetilde{D}$. 
Thus from the expression \eqref{ZD1} we now see that the cycle $\widetilde{C}$
satisfies the property (iii). 

The final item (iv) follows now easily, because $K^{2}<0$ implies that the
morphism $\vphi$ in the proof of (iii) is not an isomorphism and so there was
at least one blow-up carried out. The expression \eqref{ZD1} for $m_0P$ shows
then that at least one of the $l_{i}$ is greater than $1$. Also, note that
$m_{0}$ is equal to the largest coefficient in $m_0P$. 
\proofend

\begin{lemma}\label{prop:Z21}
  Suppose $P\ne 0$, $P^{2}=0$ and that there exists an integer $\nu>0$ for
  which $h^{0}(\nu m_{0}P)>1$, then $m_0P|_C\in \Pic^0(C)$ has finite order.
  Moreover, if $r$ is the smallest positive integer for which
  $h^{0}(rm_{0}P)>1$, then $rm_{0}P|_{C}\simeq\ms O_{C}$.
\end{lemma}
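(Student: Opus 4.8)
The plan is to study the linear system $|\nu m_0 P|$ via its restriction to the anti-canonical cycle $C$, exploiting the exact sequence obtained from the inclusion $\ms O_S(-C)=\ms O_S(K_S)\hookrightarrow\ms O_S$. Concretely, for any $m>0$, twisting the structure sequence of $C$ by $\ms O_S(mm_0P)$ gives
\begin{align}\label{eq:restseq}
0\lras \ms O_S\big((m-1)m_0P+K_S\big)\lras \ms O_S(mm_0P)\lras \ms O_C(mm_0P|_C)\lras 0,
\end{align}
where I have used $C\in|K_S\inv|$. The point of the hypothesis $P^2=0$ together with Lemma \ref{lem:Pzero}(ii) is that $m_0P|_C\in\Pic^0(C)$, so $\ms O_C(mm_0P|_C)$ is a degree-zero line bundle on the cycle $C$; its space of sections is at most one-dimensional, and it is exactly one-dimensional precisely when $mm_0P|_C\simeq\ms O_C$. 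First I would show that $h^0(\ms O_S((m-1)m_0P+K_S))=0$ for all $m\ge 1$: indeed $(m-1)m_0P+K_S = (m-1)m_0P - C$, and since $P$ is nef with $P^2=0$ while $(m-1)m_0P\cdot C = 0$ (again by Lemma \ref{lem:Pzero}(ii)), an effective divisor in this class would have to be supported on $C$ and orthogonal to $P$; using the negative semi-definiteness from Lemma \ref{lem:negdef} (whose kernel is spanned by $P$) one checks this forces the class to be a non-negative rational multiple of $P$ minus $C$, which cannot be effective because its intersection with an ample divisor is $-C\cdot H<0$. Hence from \eqref{eq:restseq} we get, for every $m\ge 1$,
$$
h^0(\ms O_S(mm_0P)) \le 1 + \dim H^0\big(\ms O_C(mm_0P|_C)\big),
$$
and more usefully $h^0(mm_0P)>1$ forces $h^0(\ms O_C(mm_0P|_C))\ge 1$, which for a degree-zero bundle on the cycle $C$ means $mm_0P|_C\simeq\ms O_C$.

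Applying this with $m=\nu$: the hypothesis $h^0(\nu m_0P)>1$ yields $\nu m_0P|_C\simeq\ms O_C$, so the class $m_0P|_C\in\Pic^0(C)$ is annihilated by $\nu$, hence has finite order. This proves the first assertion. For the second, let $r$ be the smallest positive integer with $h^0(rm_0P)>1$; by what was just shown, $rm_0P|_C\simeq\ms O_C$, which is exactly the claimed conclusion $rm_0P|_C\simeq\ms O_C$.

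The only genuinely delicate point is the vanishing $h^0((m-1)m_0P - C)=0$, i.e.\ ruling out the existence of an unexpected effective divisor in that class; this is where the structure of $C$ as an anti-canonical cycle and the negative semi-definiteness of its intersection matrix (Lemma \ref{lem:negdef}), together with $P^2=0$ and $PC_i=0$ for all $i$ (Proposition \ref{prop:Z2}(i) guarantees all $l_i>0$, so $P$ is supported on all of $C$), are essential. One subtlety to handle carefully is the case $m=1$, where the class is simply $K_S=-C$, obviously non-effective, so the argument is only needed for $m\ge 2$. I also want to double-check that $\dim H^0(\ms O_C(L))\le 1$ for $L\in\Pic^0(C)$ on a cycle of rational curves — this is standard (the cycle $C$ has arithmetic genus one, $\ms O_C$ has a one-dimensional space of sections, and a non-trivial degree-zero line bundle has none), but I would state it explicitly as it is the structural input that makes the bound work.
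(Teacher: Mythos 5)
Your overall strategy (restricting along the structure sequence of $C$ and using that a degree-zero line bundle on a cycle of rational curves has a nonzero section iff it is trivial) is viable and genuinely different from the paper's proof, which instead decomposes an explicit divisor $D=(s)\in|rm_{0}P|$ and runs a negative semi-definiteness argument. However, your proof as written contains a false step at exactly the point you flag as delicate: the vanishing $h^{0}\bigl((m-1)m_{0}P+K_{S}\bigr)=0$ fails for every $m\ge 2$. Indeed, $(m-1)m_{0}P-C=(m-2)m_{0}P+(m_{0}P-C)$, and $m_{0}P-C=\sum_{i}(l_{i}-1)C_{i}$ is effective because Proposition \ref{prop:Z2}(i) gives $l_{i}\ge 1$ for all $i$; so $h^{0}\bigl((m-1)m_{0}P-C\bigr)\ge 1$ for $m\ge 2$. (Your justification is also flawed on its own terms: an effective divisor $D$ with $DP=0$ need not be supported on $C$ --- any curve disjoint from $C$ qualifies, and such curves are precisely what the paper's $D'$ accounts for --- and the intersection of $(m-1)m_{0}P-C$ with an ample $H$ is $(m-1)m_{0}PH-CH$, not $-CH$, since $PH>0$ for the nonzero effective divisor $P$.) Since the $m=1$ case of your sequence shows $h^{0}(m_{0}P)\le h^{0}(m_{0}P|_{C})\le 1$, the hypothesis forces $\nu\ge 2$, so the case where your vanishing fails is the only case that matters.

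The gap is repairable, and the repair makes your route arguably cleaner than the paper's: work with the minimal $r$ rather than an arbitrary $\nu$, and replace the false vanishing by the bound
\[
h^{0}\bigl((r-1)m_{0}P-C\bigr)\;\le\;h^{0}\bigl((r-1)m_{0}P\bigr)\;=\;1,
\]
the inequality because $C$ is effective and the equality by minimality of $r$ (noting $r\ge 2$). Then in the exact sequence
\[
0\lras H^{0}\bigl((r-1)m_{0}P-C\bigr)\lras H^{0}(rm_{0}P)\lras H^{0}\bigl(rm_{0}P|_{C}\bigr)
\]
the middle term has dimension at least $2$ and the kernel has dimension at most $1$, so $rm_{0}P|_{C}$ admits a nonzero section and is therefore trivial; finiteness of the order of $m_{0}P|_{C}$ follows. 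As submitted, though, the argument rests on a vanishing statement that is false, so it does not constitute a proof.
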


\proof
Let $r$ be the smallest positive integer for which
$h^{0}(rm_{0}P)>1$ and let $s\in H^0(r m_0P)$ be a non-zero element that
satisfies $(s)\neq r m_0P$. 
Put $D:=(s)\in |rm_0P|$ and write
$$
D = D' + \sum_{1\le i\le k} a_iC_i,
$$
where  $D'$ (which may be $0$ at this moment)
 does not include $C_i$ for any $i$.
As $D\neq rm_0P=\sum rl_iC_i$,
we have $a_i\neq rl_i$ for at least one $i$.
Since  $D\in |rm_0P|$, we have a linear equivalence
\begin{align*}
D' +  \sum_{1\le i\le k} a_iC_i
\sim
\sum_{1\le i\le k} rl_i C_i.
\end{align*}
Collecting all indices that satisfy
$a_i>rl_i$ (if any) on the left-hand side, we obtain 
\begin{align}\label{le1}
D' + \sum_{a_i>rl_i}(a_i-r l_i)C_i
\sim \sum_{a_j \le rl_j}(r l_j - a_j)C_j.
\end{align}
If RHS is the zero-divisor,
then so has to be LHS, which contradicts
$a_i\neq rl_i$ for some $i$.
So both sides of \eqref{le1} are effective divisors.
Because LHS and RHS  do not have a common 
irreducible component,
we obtain that the self-intersection number of RHS is non-negative.
Since $P^{2}=0$, by Lemma \ref{lem:Pzero} (ii) 
we have $PC_i=0$ for any $i$, $1\le i\le k$. 
Because of Proposition \ref{prop:Z2} (i) we can apply
Lemma \ref{lem:negdef} to obtain that the total intersection matrix
$
\big(C_iC_j\big)_{1\le i,j\le k}
$
is  negative semi-definite.
Hence the self-intersection number of RHS has to be zero.
By Lemma \ref{lem:negdef}, this can happen only when
$$
 \sum_{a_j \le rl_j}(r l_j - a_j)C_j = r'm_0P,
$$
for some $r'\in\QQ_{>0}$.
Because $l_i>0$ for all $i$ by Proposition \ref{prop:Z2} (i), 
we can conclude that $rl_i-a_i>0$ for all $i$.
Therefore from \eqref{le1} we obtain a linear equivalence
\begin{align}\label{le2}
D'\sim r'm_0P, \quad
 r'\in\QQ_{>0}\,.
\end{align}
On the other hand, we have $l_i=1$ for some index $i$ by 
Proposition \ref{prop:Z2} (iii).
Because $D'$ is integral, \eqref{le2} implies now that $r'\in \ZZ$.
Hence by the minimality of $r$,
and as $D'\le D$, we have $r'=r$.

Because we have chosen $D'$ not to have any $C_{i}$ as a component and because
$D'C_i=r'm_0PC_i=0$, the divisor $D'$ does not intersect  any $C_i$.
As $D'\sim rm_0P$, this means that $rm_0P|_C\simeq \ms O_C$.
Hence $m_0P|_C$ is of finite order in $\Pic^0(C)$.
\proofend

\vsp
We will use this result to show the following property regarding
pluri-anti-canonical systems on $S$,
which will be used in the next section.

\begin{proposition}\label{prop:Z3}
If $P\ne0$, $P^{2}=0$ and $m_0P|_C\in\Pic^0(C)$ is of finite order $\tau$, 
then
\begin{enumerate}\renewcommand{\labelenumi}{\textit{(\roman{enumi})}}
\item
$ |\tau m_0K\inv|= |\tau m_0P| + \tau m_0N$,
$\,\Bs\,|\tau m_0 P|=\emptyset$,  
$\,\dim |\tau m_0P|=1$, and the associated morphism
$S\to\CP^1$ is an elliptic fibration;
\item $\kappa^{-1}(S)=1$;
\item 
for any $\nu>0$, the system $|\nu\tau m_0P|$
is composed with the pencil $|\tau m_0P|$, i.e.\ each element of
$|\nu\tau m_{0}P|$ is a sum of elements of $|\tau m_{0}P|$;
\item
for any  integer $\nu>0$, we have
$$
\big|\nu\tau m_0P-C\big| = \big|(\nu-1)\tau m_0P\big| +
\big(\tau m_0P -C\big).
$$
\end{enumerate}
\end{proposition}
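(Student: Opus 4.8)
The plan is to deduce all four parts from one recursive description of the complete linear systems $|jm_0P|$, $j\ge 0$, obtained by restriction to the anti-canonical cycle $C$. First I would record the ingredients independent of $j$: since $P^2=0$ forces $PC_i=0$ for all $i$ (Lemma \ref{lem:Pzero}(ii)), one has $jm_0P\cdot C_i=0$ and $jm_0P\cdot K=0$, so $\chi(\ms O_S(jm_0P-C))=1$ by Riemann--Roch; moreover $\ms O_C(jm_0P)\in\Pic^0(C)$, and because $\tau$ is the \emph{exact} order of $m_0P|_C$ this line bundle is trivial precisely when $\tau\mid j$, and otherwise has no non-zero section (a non-trivial element of $\Pic^0$ of a cycle of rational curves is section-free). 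Next I would observe that for $j\ge 1$ the divisor $jm_0P-C=\sum_i(jl_i-1)C_i$ is effective, because $l_i\ge 1$ by Proposition \ref{prop:Z2}(i), and that its Zariski decomposition is
\[
jm_0P-C \;=\; (j-1)m_0P \;+\; \textstyle\sum_i(l_i-1)C_i ,
\]
since $(j-1)m_0P$ is nef and orthogonal to every $C_i$, while $\sum_i(l_i-1)C_i$ is supported on $\{i:l_i\ge 2\}$, a \emph{proper} subset of $\{1,\dots,k\}$ because $l_i=1$ for some $i$ (Proposition \ref{prop:Z2}(iii)), on which the intersection matrix is negative definite by Lemma \ref{lem:negdef}. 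Proposition \ref{prop:Z1} then gives $|jm_0P-C|=|(j-1)m_0P|+\sum_i(l_i-1)C_i$.

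The recursion I want is: if $j\ge 1$ and $\tau\nmid j$, then every member of $|jm_0P|$ contains $C$ (because $\ms O_C(jm_0P)$ has no section), hence
\[
|jm_0P| \;=\; C+|jm_0P-C| \;=\; C+\textstyle\sum_i(l_i-1)C_i+|(j-1)m_0P| \;=\; m_0P+|(j-1)m_0P|
\]
(using $m_0P=\sum_il_iC_i$), so in particular $h^0(jm_0P)=h^0((j-1)m_0P)$ there; iterating down from $h^0(\ms O_S)=1$ gives $h^0(jm_0P)=1$ for $1\le j<\tau$. At a multiple $j=\nu\tau$ one has $\ms O_C(\nu\tau m_0P)\simeq\ms O_C$, and there I would instead use the short exact sequence $0\to\ms O_S(\nu\tau m_0P-C)\to\ms O_S(\nu\tau m_0P)\to\ms O_C\to 0$ directly.

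Granting this, part (i) runs as follows. The identity $|\tau m_0K\inv|=|\tau m_0P|+\tau m_0N$ is Proposition \ref{prop:Z1} with $m=\tau m_0$. For $j=\tau$ the recursion gives $h^0(\tau m_0P-C)=h^0((\tau-1)m_0P)=1$, hence $h^1(\tau m_0P-C)=0$ (from $\chi=1$ and $h^2=h^0(\ms O_S(-\tau m_0P))=0$), and the exact sequence yields $h^0(\tau m_0P)=h^0(\tau m_0P-C)+h^0(\ms O_C)=2$, i.e.\ $\dim|\tau m_0P|=1$. The unique member of $|\tau m_0P-C|$ is $\sum_i(\tau l_i-1)C_i$, so $|\tau m_0P|$ contains the divisor $\sum_i\tau l_iC_i$, supported on $C$; surjectivity of $H^0(\ms O_S(\tau m_0P))\to H^0(\ms O_C)$ provides a second member disjoint from $C$, and the two being disjoint gives $\Bs|\tau m_0P|=\emptyset$. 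The resulting morphism $g\colon S\to\CP^1$ has connected fibres (Stein factorise over a smooth curve, necessarily $\CP^1$ since $q(S)=0$; the factor map is an isomorphism because $\dim|\tau m_0P|=1$), and a general fibre $F\sim\tau m_0P$ satisfies $F^2=F\cdot K=0$, so $p_a(F)=1$ by adjunction and $g$ is an elliptic fibration. Part (ii): the fixed part $\tau m_0N$ does not affect the image of the $(\tau m_0)$-th anti-canonical map, which is therefore $\CP^1$, so $\kappa\inv(S)\ge 1$, while $d(S)=P^2=0$ forbids $\kappa\inv(S)=2$ by Proposition \ref{prop:degree}; hence $\kappa\inv(S)=1$. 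Part (iii): since $g_\ast\ms O_S=\ms O_{\CP^1}$ and $\ms O_S(\nu\tau m_0P)=g^\ast\ms O_{\CP^1}(\nu)$, the projection formula gives $H^0(\ms O_S(\nu\tau m_0P))=H^0(\ms O_{\CP^1}(\nu))$, so $|\nu\tau m_0P|=g^\ast|\ms O_{\CP^1}(\nu)|$, whose members are sums of $\nu$ fibres, each a member of $|\tau m_0P|$. Part (iv): combine $|\nu\tau m_0P-C|=|(\nu\tau-1)m_0P|+\sum_i(l_i-1)C_i$ with the iterated recursion $|(\nu\tau-1)m_0P|=(\tau-1)m_0P+|(\nu-1)\tau m_0P|$ (applying $|jm_0P|=m_0P+|(j-1)m_0P|$ to the $\tau-1$ integers $(\nu-1)\tau<j<\nu\tau$) and the identity $(\tau-1)m_0P+\sum_i(l_i-1)C_i=\sum_i(\tau l_i-1)C_i=\tau m_0P-C$, which gives $|\nu\tau m_0P-C|=|(\nu-1)\tau m_0P|+(\tau m_0P-C)$.

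The step I expect to be most delicate is justifying the Zariski decomposition of $jm_0P-C$ while keeping the divisibility bookkeeping straight: effectivity of $\sum_i(jl_i-1)C_i$ and nefness of $(j-1)m_0P$ rely on $l_i\ge 1$ (Proposition \ref{prop:Z2}(i)), whereas the negative-definiteness that makes $\sum_i(l_i-1)C_i$ the \emph{entire} negative part relies on some $l_i$ equalling $1$ (Proposition \ref{prop:Z2}(iii)); one must also check the boundary cases $j=1$ and $\tau=1$. The other points needing care are the vanishing $h^0(\ms O_C(L))=0$ for a non-trivial $L\in\Pic^0(C)$ — which is exactly where the hypothesis that $\tau$ is the \emph{order} of $m_0P|_C$ (not merely some exponent killing it) gets used — and, in (i), the Stein-factorisation step that turns $g$ into a genuine elliptic fibration over $\CP^1$ with connected fibres.
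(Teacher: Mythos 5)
Your proof is correct. It shares the overall skeleton of the paper's argument -- the identity $|\tau m_0K^{-1}| = |\tau m_0P| + \tau m_0N$ from Proposition \ref{prop:Z1}, the restriction sequence to $C$ combined with Riemann--Roch to get $h^0(\tau m_0P)=2$ and base-point-freeness, the genus formula for the fibres, and the projection formula for (iii) -- but the two hardest steps are executed by a genuinely different route. Where the paper obtains $h^0(\nu m_0P)=1$ for $0<\nu<\tau$ from Lemma \ref{prop:Z21} and then proves $h^0(\tau m_0P-C)=1$ by peeling off a maximal fixed component $D$ of $|\tau m_0P-C|$ and showing $(\tau m_0P-C-D)^2=0$, you instead exhibit the Zariski decomposition of $jm_0P-C$ in closed form as $(j-1)m_0P+\sum_i(l_i-1)C_i$ (legitimate precisely because $l_i\ge 1$ for all $i$ and $l_i=1$ for some $i$, by Proposition \ref{prop:Z2}(i) and (iii), so the candidate negative part is supported on a proper subset of the components, where Lemma \ref{lem:negdef} gives negative definiteness), and you replace Lemma \ref{prop:Z21} by the elementary fact that a non-trivial element of $\Pic^0$ of a cycle of rational curves is section-free -- which is exactly where the hypothesis that $\tau$ is the order, not just an exponent, is used. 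Your derivation of (iv) is then purely combinatorial, iterating the recursion $|jm_0P|=m_0P+|(j-1)m_0P|$ for $\tau\nmid j$, whereas the paper deduces (iv) from the fibration itself (every member of $|\nu\tau m_0P|$ is a sum of fibres, one of which contains $C$). Your route buys a uniform description of all the systems $|jm_0P|$ and $|jm_0P-C|$ at once and makes (iv) independent of the geometry in (i); the paper's route avoids having to verify a proposed Zariski decomposition. In a final write-up you should spell out the points you already flag -- the degenerate cases where $\sum_i(l_i-1)C_i=0$ or $\tau=1$, and the Stein-factorisation argument for connectedness of the fibres (the paper instead uses $h^1(\mathcal{O}_S)=0$ together with $\mathcal{O}_F(F)\cong\mathcal{O}_F$) -- but none of these hides a gap.
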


\proof
Because $C$ is an anti-canonical divisor with Zariski decomposition
$C=P+N$, we obtain $|\tau m_0K\inv|= |\tau m_0P| + \tau m_0N$. 
By Lemma \ref{prop:Z21} we have
\begin{align}\label{one1}
h^0(\nu m_0P)= 1,\quad \text{ if }\;\; 0<\nu<\tau.
\end{align}
Note that $\tau m_0 P - C$ is an effective divisor by 
Proposition \ref{prop:Z2} (i). Using Serre-duality this implies
$H^2(\tau m_0P-C)=0$. 
Using $P^{2}=0$, the definition of the Zariski decomposition implies that
\[  K^{2}=C^{2}=P^{2}+2PN+N^{2}=N^{2}\le 0  \] 
with equality iff $N=0$. If $(\tau m_0P-C)^2 = C^2=K^2<0$,
the non-empty system $|\tau m_0P-C|$ has a (non-zero) fixed component.
Indeed, if $L$ is a line bundle on a smooth surface so that $|L|$ is not empty
and $L^2<0$, then $|L|$ must have a fixed component, as can be seen as
follows. 
Let $Y=\sum d_i Y_i \in |L|$ with $d_i>0$ and prime divisors $Y_i$. Then there
exists $k$ such that $LY_k<0$, because otherwise we would have $L^2 =
L\sum d_i Y_i = \sum d_i LY_i \ge 0$.
If now $Y'$ is any element of $|L|$, then $Y'Y_k = LY_k<0$, hence $Y_k$ is a
component of $Y'$, i.e.\ a fixed component of $|L|$. 

If $K^{2}<0$ we let $D$ be the (maximal) fixed component
of  $|\tau m_0P-C|$.
By Lemma \ref{lem:Pzero} (ii), Proposition \ref{prop:Z2} (i) and 
Lemma \ref{lem:negdef}, the
intersection matrix formed by the components of the cycle $C$ is negative
semi-definite, hence $(\tau m_0P-C-D)^2\le 0$.
But if this was negative, $|\tau m_0P-C-D|$ would still have
a fixed component, in contradiction to the choice of $D$.
Hence $(\tau m_0P-C-D)^2 = 0$.
If $K^2=0$, we simply take $D=0$ to obtain $(\tau m_0P-C-D)^2 = 0$. 
Therefore, by Lemma \ref{lem:negdef} again, we have
\begin{align}\label{0028}
\tau m_0P-C-D =s m_0P,\quad s\in\QQ_{>0}.
\end{align}
Moreover, $s$ is an integer since $m_0P$ has a component
of multiplicity one, by Proposition \ref{prop:Z2} (iii),
and the left-hand side is an integral divisor.
From \eqref{0028} we have $s<\tau$, and so $h^0(sm_0P)=1$ by \eqref{one1}. 
Therefore, 
\[
h^0(\tau m_0P-C) = h^0(\tau m_0P-C-D) = h^0(sm_0P)=1\,.
\]
By Riemann-Roch we readily obtain $\chi(\tau m_0P-C) = 1$. Because
$H^{2}(\tau m_0P-C)=0$ and $h^{0}(\tau m_0P-C)= 1$, we must have 
$H^1(\tau m_0P-C) = 0$.
Since $h^0(\tau m_0P|_C)= h^0(\ms O_C)=1$, the standard exact sequence 
$$
0 \lras \tau m_0P-C \lras \tau m_0P \lras \tau m_0P|_C\lras 0
$$
implies now that $h^0(\tau m_0P) = 2$. In addition, this sequence provides us
with a surjection $H^0(\tau m_0P)\to H^0(\tau m_0P|_C) \simeq H^0(\ms O_C)$,
from which we obtain $\Bs\,|\tau m_0P|=\emptyset$. 

The general fibre of the morphism $\phi:S\to \CP^1$ associated to the
pencil $|\tau m_0P|$ is non-singular, because $S$ is smooth.
The smooth (and hence all) fibres of $\phi$ are connected. To see this, let
$F$ be a smooth fibre of $\phi$, then $h^{0}(F) = h^{0}(\tau m_0P) =2$ and
$\ms O_{S}(F) \cong \phi^{\ast}\ms O(1)$, hence
$\ms O_{F}(F) \cong \phi^{\ast}\ms O(1)_{\vert F}\cong \ms O_{F}$.
Because $h^{1}(\ms O_{S}) = 0$, the exact sequence
\[0 \lras \ms O_{S} \lras F \lras \ms O_{F}(F) \lras 0\]
implies $2=h^{0}(F) = h^{0}(\ms O_{S}) + h^{0}(\ms O_{F})
= 1+h^{0}(\ms O_{F})$. Therefore, $h^{0}(\ms O_{F})=1$, i.e.\ $F$ is
connected.
Then, since $P^{2}=PK\inv=0$, the genus formula implies that the
general fibre of $\phi$ is an elliptic curve.
This completes the proof of (i).

To prove (ii) we just have to observe that (i) immediately implies 
$\kappa^{-1}(S)\ge1$. The assumption $P^{2}=0$, on the other hand, gives
$\kappa^{-1}(S)<2$, and so $\kappa^{-1}(S)=1$.

To prove assertion (iii), first observe that
$|\nu\tau m_{0}P| = |\phi^{\ast}\mathcal{O}(\nu)|$. As $\phi$ has connected
fibres, we have $\phi_{\ast}\ms O_{S} \cong \ms O_{\mathbb{P}^{1}}$ and the
projection formula implies
$\phi_{\ast}\phi^{\ast}\mathcal{O}(\nu) \cong \mathcal{O}(\nu)$. Hence,
$H^{0}(\nu\tau m_{0}P) \cong H^{0}(\phi^{\ast}\mathcal{O}(\nu))
\cong H^{0}(\mathcal{O}(\nu))$, which shows that each element of
$|\nu\tau m_{0}P|$ is a sum of fibres, i.e.\ a sum of elements of
$|\tau m_{0}P|$. 

Finally, for (iv), let $\phi:S\to\CP^1$ be as above and 
let $t_1\in\CP^1$ be the point for which $\tau m_0P= \phi^{-1}(t_1)$.
Let $D\in |\nu\tau m_0P-C|$ be any member.
Then since $D+C\in |\nu\tau m_0P|$ and each member of this linear system is a
sum of members of $|\tau m_0P|$, i.e.\ fibres of $\phi$,
and because $C\subset \phi^{-1}(t_1)$
there exist points $t_2,t_3,\dots, t_{\nu}$ such that 
$$
D + C = \sum_{1\le i\le \nu} \phi\inv(t_i)\quad\text{ and so }\quad
D =  \sum_{2\le i\le \nu} \phi\inv(t_i) + \big(\phi\inv(t_1) - C\big)\,.
$$
This implies assertion (iv), because $h^0(\tau m_0P-C) = 1$, as we have shown
above.
\proofend

\begin{corollary}\label{cor:finord}
  Suppose that $P\ne 0$ and $P^{2}=0$. Then
  \[
  \kappa\inv(S) = 1 \quad\iff \quad
  m_{0}P|_{C}\;\text{ has finite order in }\; \Pic^{0}(C)\,.
  \]
\end{corollary}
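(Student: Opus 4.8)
The plan is to prove Corollary \ref{cor:finord} by combining Lemma \ref{prop:Z21} and Proposition \ref{prop:Z3}, which together already contain both implications. Throughout we assume $P\ne 0$ and $P^2=0$, so that Lemma \ref{lem:Pzero} (ii) gives $PC_i=0$ for all $i$, i.e.\ $m_0P|_C\in\Pic^0(C)$, and all the machinery developed for anti-canonical cycles with vanishing nef self-intersection is available.

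For the implication ``$m_0P|_C$ has finite order $\Rightarrow\kappa\inv(S)=1$'', this is precisely Proposition \ref{prop:Z3} (ii): if $m_0P|_C$ has finite order $\tau$ in $\Pic^0(C)$, then $|\tau m_0P|$ is base-point free of dimension one and induces an elliptic fibration $S\to\CP^1$, which forces $\kappa\inv(S)\ge 1$, while $P^2=0$ together with Proposition \ref{prop:degree} forces $\kappa\inv(S)\le 1$. So this direction is immediate.

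For the converse, ``$\kappa\inv(S)=1\Rightarrow m_0P|_C$ has finite order'', I would argue as follows. If $\kappa\inv(S)=1$, then by definition there is some integer $N>0$ with $\dim\phi_N(S)=1$, hence $h^0(NK\inv)\ge 2$. By Proposition \ref{prop:Z1} applied to the Zariski decomposition $NC=NP+NN$, we have $|NK\inv|=|\lfloor NP\rfloor|+\lceil NN\rceil$, so $h^0(\lfloor NP\rfloor)=h^0(NK\inv)\ge 2$. Now $\lfloor NP\rfloor\le NP$ and, since every component $C_i$ of $C$ satisfies $PC_i=0$, the divisor $NP-\lfloor NP\rfloor$ is supported on the $C_i$; hence $\lfloor NP\rfloor = NP - \sum_i \eta_i C_i$ with $0\le\eta_i<1$, and $\lfloor NP\rfloor \cdot C_j = -\sum_i\eta_i(C_iC_j)$. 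Using Proposition \ref{prop:Z2} (i) (all $l_i>0$) and Lemma \ref{lem:negdef}, the intersection form on the $C_i$ is negative semi-definite with kernel spanned by $P$; one checks that $h^0(\lfloor NP\rfloor)\ge 2$ together with $\lfloor NP\rfloor\le NP$ and $(NP)^2=0$ forces, via Lemma \ref{lem:negdef} (as in the proof of Lemma \ref{prop:Z21}), that the only way a movable part can appear is if some multiple $\nu m_0 P$ itself has $h^0(\nu m_0P)>1$. Concretely: choosing $\nu$ large enough that $\nu m_0 \ge N$ and $|\nu m_0 P|\supseteq |\lfloor NP\rfloor| + (\text{effective})$ shows $h^0(\nu m_0P)>1$ for some $\nu>0$. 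This is exactly the hypothesis of Lemma \ref{prop:Z21}, which then yields that $m_0P|_C$ has finite order in $\Pic^0(C)$.

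The main obstacle I anticipate is the bookkeeping in the last step: passing cleanly from ``$\kappa\inv(S)=1$'', i.e.\ $h^0(\nu m_0 K\inv)>1$ for some $\nu$ (up to replacing $N$ by a multiple of $m_0$, which is harmless since $\kappa\inv$ is computed as a max over all $m$), to ``$h^0(\nu m_0 P)>1$ for some $\nu$'', without circularity. The key point is that $NN$ is a fixed component of $|NK\inv|$ by Proposition \ref{prop:Z1}, so the movable part of $|NK\inv|$ lives in $|\lfloor NP\rfloor|$, and $\lfloor NP\rfloor$ differs from the integral divisor $m_0'P$ (for a suitable multiple $m_0'$ of $m_0$ dividing $N$ after possibly enlarging $N$) only by an effective divisor supported on $C$; adding that back shows $h^0(m_0'P)>1$. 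Once $h^0(\nu m_0 P)>1$ is established for some $\nu>0$, Lemma \ref{prop:Z21} finishes everything, so the corollary follows. I would write this as a short three-sentence proof citing Proposition \ref{prop:Z3} (ii) for one direction and Lemma \ref{prop:Z21} together with Proposition \ref{prop:Z1} for the other.
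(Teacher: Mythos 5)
Your proposal is correct and follows essentially the same route as the paper: Proposition \ref{prop:Z3} (ii) for the ``if'' direction, and Proposition \ref{prop:Z1} plus Lemma \ref{prop:Z21} for the ``only if'' direction. The paper avoids your bookkeeping with $\lfloor NP\rfloor$ by taking the exponent to be a multiple $\nu m_0$ of $m_0$ from the outset (harmless since $C$ is effective, so $h^0(mK\inv)$ is non-decreasing in $m$), whence $\nu m_0P$ is already integral and $h^0(\nu m_0P)=h^0(\nu m_0K\inv)>1$ directly.
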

\proof
If $\kappa\inv(S)=1$ there exists an integer $\nu>0$ for which 
$h^0(\nu m_0P)=h^0(\nu m_0K\inv)>1$, hence $m_{0}P|_{C}$ 
has finite order by Lemma \ref{prop:Z21}.
The converse is Proposition \ref{prop:Z3} (ii).
\proofend

\section{Twistor spaces whose fundamental system is a pencil}

Let $Z$ be a twistor space on $n\CP^2$,
and $F$ be the natural square root of the anti-canonical bundle
over $Z$, which is known to exist on any twistor space \cite{Hi81}.
Following Poon \cite{P92}, we call $F$ the {\em fundamental 
line bundle}, and the associated linear system $|F|$
the {\em fundamental system}.
Basic properties of the fundamental line bundle are
$$
FL = 2, {\text{ and }}\, \sigma^*F\simeq \ol F,
$$
where $L$ is a fibre of the twistor projection $Z\to n\CP^2$
which is called a \emph{real twistor line},
and $\sigma:Z\to Z$ is a natural anti-holomorphic involution
called the \emph{real structure}.

By the works of Poon \cite{P86,P92}, Kreussler and Kurke
\cite{KK92},
when $n\le 3$, for any twistor space $Z$ over 
$n\CP^2$ whose self-dual metric is of positive
scalar curvature, we have $\dim |F|\ge 3$,
and the structure of these twistor spaces is well understood through
the rational map associated to the fundamental system $|F|$.
In particular, all of these twistor spaces are Moishezon.

Let now $Z$ be a twistor space on $n\CP^2$,
$n\ge 4$, and assume that $\dim |F|\ge 2$.
In this situation $|F|$ needs to contain an irreducible divisor, because
otherwise there would exist two pencils, $|D|$ and $|\overline{D}|$, of
divisors of degree one that define a surjective rational map to
$\CP^{1}\times\CP^{1}$ and for which $D+\overline{D}$ is a fundamental
divisor. But then the image of $|F|$ would be at least two-dimensional and
Bertini's Theorem implies that not all elements in $|F|$ could be
reducible. This allows us to apply the results of \cite{Kr99}.

First, by \cite[Theorem 3.6]{Kr99}, if $\dim |F|\ge 3$, we always have the
equality $\dim |F| = 3$, and $Z$ has to be a so-called LeBrun twistor space
\cite{LB91}, the structure of which is also well-understood.
In particular, such twistor spaces are Moishezon.
Second, by \cite[Theorem 3.7]{Kr99}, if $\dim |F|=2$ and $n\ge 5$,
then $Z$ has to be one of the 
twistor spaces investigated in \cite{CK98},
and they are again Moishezon.
Third, if $\dim |F|=2$ and $n=4$, then $Z$ is either one of the 
Moishezon twistor spaces studied in \cite{CK98},
or a twistor space that satisfies $a(Z)=2$.
The former happens  exactly when $\Bs|F|\neq\emptyset$
(\cite[Proposition 2.4]{Hon06}),
and
if the latter is the case, the morphism $Z\to\CP^2$ associated to 
the net $|F|$ is an elliptic fibration which is
an algebraic reduction of $Z$.
Thus the basic structure of $Z$ is also well-understood if $\dim |F|=2$.

For the rest of this paper we let $Z$ be a twistor space over
$n\CP^2$ $(n\ge 4)$ and suppose  $\dim|F| = 1$.
Then general members of the fundamental system $|F|$ are irreducible,
since otherwise we readily have $\dim |F|\ge 3$. This implies that the
self-dual metric on $n\CP^2$ has positive scalar curvature, 
see \cite[Proposition 2.4]{Kr99}.

Let $S\in |F|$ be a smooth fundamental divisor and
recall that $H^1(\ms O_Z)=0$ because $Z$ is simply connected.
As we assume $h^0(F)=2$, the standard exact sequence
\begin{align}\label{stseq}
  0 \lras \ms O_Z\lras F\lras K_S\inv\lras 0,
\end{align}
implies $h^0(K_S\inv)=1$.
This means that the anti-canonical system $|K_S\inv|$ consists
of a single member, say $C$. 
In particular, we have $\kappa\inv(S)\ge 0$.
From the surjectivity of the restriction map $H^0(F)\to H^0(K_S\inv)$ 
we have
\begin{align}
  \Bs\,|F| = C.
\end{align}

By a theorem of Pedersen and Poon
\cite{PP94}, any real irreducible member $S\in |F|$
is non-singular and 
obtained from $\CP^1\times\CP^1$ by blowing up $2n$ points.
On such a surface, the anti-canonical curve $C$ is  real (i.e.\ $\sigma(C)=C$)
since $S$ and so $K_S\inv$ are real. 
Moreover we have the following  result on the structure of the base curve $C$.

\begin{proposition}\label{prop:cycle}
  Let $Z\to n\CP^2$, $n\ge 4$, be a twistor space satisfying $\dim |F|=1$, and 
  let the curve $C$ be the base locus of the pencil $|F|$, as above.
  If $C$ is non-singular, it is an elliptic curve.
  If $C$ is singular, it is a cycle of rational curves
  which is of the form
  \begin{align}\label{cycle1}
    C = C_1 + \dots + C_k + \ol C_1 + \dots + \ol C_k
  \end{align}
  for some $k\ge 1$,
  where $\ol C_i$ means $\sigma(C_i)$, and 
  in the presentation \eqref{cycle1} two components
  intersect iff they are adjacent, or they are $C_1$ and $\ol C_k$.
\end{proposition}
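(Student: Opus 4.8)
The plan is to read off the structure of $C$ from three properties: (a) $p_a(C)=1$; (b) $C$ is connected; and (c) $\sigma|_C$ is a fixed-point-free antiholomorphic involution. Property (a) is adjunction: $C\in|K_S\inv|$ gives $2p_a(C)-2=C\cdot(C+K_S)=0$. Property (b) follows from the ideal-sheaf sequence of $C\subset S$ together with $H^1(\ms O_S(K_S))=0$ (Serre duality and $H^1(\ms O_S)=0$, as $S$ is rational): the map $H^0(\ms O_S)\to H^0(\ms O_C)$ is onto, so $h^0(\ms O_C)=1$. Property (c) holds because the real structure of a twistor space is free on $Z$, hence on the real divisor $S$ and on $C$.

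If $C$ is non-singular, (a) and (b) immediately make it a smooth connected genus-one curve, i.e.\ an elliptic curve; this settles the first assertion. So assume $C$ is singular. For an irreducible component $C_i$, adjunction gives $2p_a(C_i)-2=C_i^{2}-C_i\cdot C=-C_i\cdot(C-C_i)\le 0$, so $p_a(C_i)\le1$, with equality only if $C_i$ is disjoint from $C-C_i$; by connectedness that would force $C=C_i$. (One also verifies $C$ is reduced; a component of multiplicity $\ge2$ is excluded on these surfaces using $p_a(C)=1$ and connectedness.) Hence, if $C$ is irreducible it is a rational curve with a single node or cusp, whose unique singular point is then fixed by $\sigma|_C$, contradicting (c). Therefore $C$ is reducible, each component is a smooth rational curve with $C_i\cdot(C-C_i)=2$, and all singularities of $C$ are nodes (a tacnode, or a point on three components, would again be a unique $\sigma$-fixed singular point). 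A connected curve with only nodes in which every component meets the rest in exactly two points is a cycle, so $C$ is a normal-crossing cycle of $m\ge2$ smooth rational curves.

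It remains to determine the $\sigma$-action on the cycle. Since $\sigma$ permutes the $m$ nodes freely, $m$ is even, $m=2k$. An order-two automorphism of a $2k$-cycle fixing no node is either the rotation by $k$ (the ``antipodal'' one) or a reflection fixing two opposite components. In the latter case a fixed component is a $\sigma$-invariant smooth rational curve without real points. Pushing such a curve down via the birational morphism $S\to\CP^1\times\CP^1$ (furnished by the theorem of Pedersen and Poon) and comparing it with the bidegree-$(2,2)$ curve in which it lies, one finds it must be the strict transform of a real $(1,1)$-curve; but then the $2n$ blown-up points — which by reality form $n$ conjugate pairs and include no $\sigma$-fixed point, since the induced real structure on $\CP^1\times\CP^1$ has no real points — cannot be distributed $\sigma$-equivariantly over the remaining low-degree configuration while keeping $C$ a normal-crossing cycle of smooth rational curves, a contradiction. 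Hence $\sigma$ acts as the rotation by $k$. Labelling the components along one of the two arcs it preserves as $C_1,\dots,C_k$ and setting $\ol C_j:=\sigma(C_j)$, the cyclic order becomes $C_1,\dots,C_k,\ol C_1,\dots,\ol C_k$ with adjacency exactly as in \eqref{cycle1}.

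The only genuinely difficult point is the last one — excluding a $\sigma$-fixed component of the cycle. Everything else is adjunction bookkeeping combined with the freeness of the real structure; but ruling out a real $(1,1)$-curve component really uses the explicit description of $S$ as $\CP^1\times\CP^1$ blown up at $2n$ conjugate points and a careful analysis of how those points can lie on an anticanonical configuration.
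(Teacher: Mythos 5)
Your overall strategy (adjunction, connectedness via $H^1(\ms O_S)=0$, and freeness of the real structure on $C$) is genuinely different from the paper's, which simply invokes the classification of the singular base curve in \cite[Proposition 3.6]{Kr98} and then eliminates the one offending case. The elementary parts of your argument are sound: arithmetic genus one, connectedness, and the exclusion of an irreducible singular $C$, of tacnodes and of triple points because in each of those cases the singular point of $C$ would be unique and hence fixed by $\sigma$. Note, however, that you assert rather than prove that $C$ is reduced; non-reduced anticanonical divisors on rational surfaces certainly exist, so this requires an argument (the paper gets it for free from the cited classification).

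The genuine gap is the step you yourself flag as the difficult one: excluding the ``reflection'' case, i.e.\ a $\sigma$-invariant component of the cycle. What you offer there is not a proof. A $\sigma$-invariant component need not be the strict transform of a $(1,1)$-curve (it could be contracted by $S\to\CP^1\times\CP^1$, or map to a component of some other bidegree of the image $(2,2)$-cycle), and the claimed impossibility of distributing the $2n$ blown-up points $\sigma$-equivariantly is never substantiated. More decisively, your argument for this step nowhere uses the hypothesis $\dim|F|=1$, and without that hypothesis the statement you are trying to prove is false: an anticanonical cycle on a real fundamental divisor with a $\sigma$-invariant component --- a real twistor line, which is exactly a $\sigma$-invariant smooth rational curve without real points --- does occur on twistor spaces of $n\CP^2$; this is case (II) of \cite[Proposition 3.6]{Kr98}. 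The paper eliminates that case precisely by observing that a real twistor line contained in $S$ moves in a pencil, so its presence as a component of the anticanonical curve would force $h^0(K_S\inv)\ge 2$, contradicting $\dim|F|=1$. Any correct proof of this step must therefore feed in $h^0(K_S\inv)=1$ together with some twistor-theoretic input about real curves in $Z$; a purely configurational count of blown-up points on $\CP^1\times\CP^1$ cannot succeed.
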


\proof
Recall that $C\subset S$ for each smooth real $S\in |F|$.
If $C$ is non-singular, the adjunction formula 
immediately implies that $C$ is an elliptic curve.
If $C$ is singular it is a cycle of rational curves,
by \cite[Proposition 3.6]{Kr98}. If $C$ arises from
item (I) or (III) in \cite[Proposition 3.6]{Kr98}, it consists of conjugate
pairs of rational curves, as required in \eqref{cycle1}. If $C$ arose from
item (II) in \cite[Proposition 3.6]{Kr98} it would have exactly
two irreducible components, one of them a real twistor line. Because a real
twistor line in $S$ generates a pencil, we would readily have
$h^0(K_S\inv)=2$, in contradiction to our observations just after the exact 
sequence \eqref{stseq}. 
Therefore $C$ is a cycle of rational curves
as described in \eqref{cycle1}.
\proofend

\vsp
In the sequel, for simplicity of notation, we often write
$C_{k+1}$ for $\ol C_1$ and $\ol C_{k+1}$ for $C_1$
for components of the cycle \eqref{cycle1}.
This cycle will be significant throughout our proof of 
the main result.

Another important property of twistor spaces 
satisfying $\dim |F| =1 $,
which we next explain, concerns reducible members of the pencil $|F|$. 
The cycle \eqref{cycle1} can be split into connected halves in exactly 
$k$ ways.
For example, if $k=3$, the possibilities are:
\begin{align*}
  (C_1+C_2+C_3) & +  (\ol C_1 + \ol C_2 + \ol C_3),\\
  (C_2+C_3+\ol C_1) & +  (\ol C_2 + \ol C_3 + C_1),\\
  (C_3+\ol C_1+\ol C_2) & +  (\ol C_3 +  C_1 +  C_2).
\end{align*}
The following proposition,
which was proved in \cite{Kr98},
implies that if the base curve is singular,
these subdivisions
are nicely realised by reducible members of the pencil $|F|$.
\begin{proposition}\label{prop:d1}
  (\cite[Proposition 3.7]{Kr98})
  If the base curve of the pencil $|F|$ is a cycle
  of rational curves as in  \eqref{cycle1},
  then $|F|$ has exactly $k$ reducible members.
  Moreover each of them is real and of the form
  $S_i^++S_i^-$ ($1\le i\le k$),
  where $S_i^+$ and $S_i^-$ are non-singular irreducible divisors
  satisfying $\ol S_i^+ = S_i^-$.
  Furthermore, the divisor $S_i^++S_i^-$
  splits the cycle $C$ into  halves in the following manner:
  \begin{itemize}
  \item if $L_i$ denotes the real twistor line joining 
    the two points $C_i\cap C_{i+1}$ and $\ol C_i\cap \ol C_{i+1}$,
    then $S_i^+\cap S_i^-=L_i$,
  \item the intersections $S_i^+\cap C$ and $S_i^-\cap C$ are 
    connected.
  \end{itemize}
\end{proposition}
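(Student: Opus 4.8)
The plan is to determine the shape of an arbitrary reducible member of $|F|$ using the degree (intersection number with a real twistor line $L$) together with the real structure $\sigma$, and only then to count. Throughout write $\pi\colon Z\to n\CP^2$ for the twistor fibration, and recall $K_Z=-2F$, $F\cdot L=2$, $\Bs\,|F|=C$, and $h^{0}(K_{S}\inv)=1$ for a smooth member $S\in|F|$. First I would reduce to the normal form $S^{\dagger}=D_{1}+D_{2}$ with $D_{1},D_{2}$ smooth degree-one divisors meeting in a curve. A twistor space over $n\CP^2$ with $n\ge 1$ carries no nonzero effective divisor of degree $0$ (such a divisor would be $\pi$-vertical, hence of the form $\pi\inv(B)$, which is never a complex divisor here), so every component of a reducible $S^{\dagger}\in|F|$ has degree $\ge 1$; as $F\cdot L=2$ this forces exactly two components, $S^{\dagger}=D_{1}+D_{2}$ with $D_{1}\cdot L=D_{2}\cdot L=1$, and $S^{\dagger}$ is reduced, since otherwise $S^{\dagger}=2D_{1}$ would give $2(S\cap D_{1})=C$ on a smooth $S$, contradicting that $C$ is reduced. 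Smoothness of $D_{1},D_{2}$ comes from the classification of irreducible degree-one divisors on twistor spaces of positive scalar curvature. Finally $D_{1}\cap D_{2}\ne\emptyset$: otherwise the connected cycle $C$ would lie entirely in one of them, say $C\subset D_{1}$, and then $S\cap D_{2}$ would be a nonzero effective divisor on a smooth $S\in|F|$ supported on the finite set $C\cap D_{2}$ --- absurd. So $\Gamma:=D_{1}\cap D_{2}$ is a curve.

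Next I would show that $\Gamma$ is a \emph{real twistor line} and that $S^{\dagger}$ is real. A generic twistor line misses $\Gamma$: for a generic \emph{real} twistor line this is because it has no real point, so it meets $D_{1}$ and $\sigma(D_{1})$ in distinct points, and a dimension count handles the generic line; hence $\pi(\Gamma)\ne n\CP^2$, and the structure theory of degree-one divisors then forces $\pi(\Gamma)$ to be a single point, so $\Gamma$ is a twistor line $\cong\CP^1$. Since a real twistor line contained in a smooth $S\in|F|$ would move in a pencil, forcing $h^{0}(K_{S}\inv)\ge 2$, we have $\Gamma\not\subset S$, so $\#(\Gamma\cap S)\le\Gamma\cdot F=2$. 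Every node of the cycle $C$ separating a component lying in $D_{1}$ from one lying in $D_{2}$ lies on $\Gamma\cap S$, so there are at most two such nodes; as $C\cap D_{1}$ and $C\cap D_{2}$ are both nonzero (each is cut on a smooth $S$ by $D_{1}$, resp.\ $D_{2}$), it follows that $C\cap D_{1}$ and $C\cap D_{2}$ are complementary \emph{connected} arcs of the cycle \eqref{cycle1} whose endpoints are the two separating nodes. Now $\sigma$ permutes the $2k$ components of $C$ as rotation of the cycle by $k$ steps, which moves every proper arc, so $\sigma$ cannot preserve $C\cap D_{1}$; hence $\sigma(D_{1})\ne D_{1}$ and therefore $\sigma(D_{1})=D_{2}$. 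Thus $S^{\dagger}$ is real, $S^{\dagger}=S_{i}^{+}+S_{i}^{-}$ with $S_{i}^{-}=\sigma(S_{i}^{+})$ and $\Gamma=S_{i}^{+}\cap S_{i}^{-}$ a real twistor line; by reality the two arcs have equal length, hence $k$ consecutive components each, and the two separating nodes are $C_{i}\cap C_{i+1}$ and $\ol C_{i}\cap\ol C_{i+1}$ for some $i$, so $\Gamma$ is the line $L_{i}$ of the statement and the connectedness of $S_{i}^{\pm}\cap C$ is the arc property.

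It remains to count. The above attaches to each reducible member an index $i$, and shifting $i$ by $k$ yields the same unordered pair of separating nodes, hence the same member; so there are at most $k$ reducible members, each of the asserted form. For the reverse inequality one must realise each of the $k$ splits. I would do this through the fibration $f_{1}\colon Z_{1}\to\CP^1$ of the Introduction (blow up $C$, then resolve the resulting ordinary double points): its reducible fibres correspond to the reducible members of $|F|$, and an Euler-characteristic computation --- using $\chi(Z)=2+4n$, the Euler number of a generic fibre (a rational surface, from the Pedersen--Poon description of $S$), and $\chi(E)$, together with the already-known form $S_{i}^{+}+S_{i}^{-}$ of each reducible fibre --- forces the number of reducible fibres to be exactly $k$. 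Alternatively one constructs $S_{i}^{+}$ directly as the degree-one divisor containing the prescribed half of $C$ and the twistor line $L_{i}$.

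The main obstacles are, in the second step, that $D_{1}\cap D_{2}$ is \emph{exactly one} twistor line (ruling out that $\pi(\Gamma)$ is a curve, and that $\Gamma$ is reducible or non-reduced), and, in the last step, the \emph{realisation} of all $k$ splits. Both rest on the classification of irreducible degree-one divisors on twistor spaces over $n\CP^2$ with positive scalar curvature, so the cleanest exposition imports those structural results rather than reproving them.
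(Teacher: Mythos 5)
The paper does not prove this proposition at all: it is quoted from \cite[Proposition 3.7]{Kr98}, so your proposal can only be measured against that source. Your overall strategy --- first pin down the shape of an arbitrary reducible member by degree considerations, then count --- is the right one, and the degree argument, the bound $\#(\Gamma\cap S)\le \Gamma\cdot F=2$ forcing exactly two separating nodes, and the resulting arc decomposition of $C$ are all sound. The lower bound is also recoverable by your second route: the restriction map $H^0(F)\to H^0\bigl(F|_{L_i}(-x_i-\ol x_i)\bigr)\cong\CC$ has nontrivial kernel, its kernel is real, and the resulting member of $|F|$ containing $L_i$ cannot be irreducible (a real irreducible member is smooth by Pedersen--Poon and a smooth member containing a real twistor line would give $h^0(K_S\inv)\ge 2$). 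By contrast, the Euler-characteristic route is circular as stated, since it needs the topology of the reducible fibres, which is what is being established.

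The genuine gap is in the reality step. From ``$\sigma$ cannot preserve $C\cap D_1$'' you correctly get $\sigma(D_1)\ne D_1$, but the inference ``therefore $\sigma(D_1)=D_2$'' presupposes that $\sigma(D_1)$ is a component of $S^\dagger=D_1+D_2$, i.e.\ that $\sigma(S^\dagger)=S^\dagger$ --- which is exactly the reality being proved. A priori $\sigma$ could permute the finitely many reducible members nontrivially. The same unestablished reality is used again in ``by reality the two arcs have equal length.'' The missing ingredient is the standard fact that for any effective divisor $D$ of degree one on a twistor space one has $D+\ol D\in|F|$; granting it, $D_1+\ol D_1$ and $D_1+D_2$ are two members of the pencil sharing the component $D_1$, and since distinct members of a pencil have no common component (the base locus is the curve $C$), they coincide and $\ol D_1=D_2$. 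The same fact, via $D_2\cdot F^2=\tfrac12F^3\ne 0$, is also what justifies your unsupported claim that $S\cap D_2$ is nonzero in the disjointness argument --- the parenthetical ``each is cut on a smooth $S$ by $D_i$'' does not show non-emptiness. So the proof is completable, but only by importing this additional piece of the degree-one structure theory, and the reality of the reducible members should be derived from it rather than from the action of $\sigma$ on the arcs of $C$.
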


Note that all the reducible fundamental divisors $S_i^++S_i^-$ are singular
along the twistor line $L_{i}=S_i^+\cap S_i^-$. Therefore, all smooth
fundamental divisors are automatically irreducible.
We will also need the following property of the Zariski decomposition
of the cycle $C$.

\begin{proposition}\label{prop:indep}
  In the situation of Proposition \ref{prop:cycle},
  let $S$ be a smooth member of the pencil $|F|$.
  Then, the degree of $S$ (see \eqref{degree}) and 
  the Zariski decomposition of the cycle $C\subset S$
  are independent of the choice of $S$.
\end{proposition}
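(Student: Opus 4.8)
The plan is to exploit the fact that the pencil $|F|$ on $Z$ restricts, on each smooth member $S\in|F|$, to the fixed curve $C=\Bs|F|$ together with the trivial bundle; so any two smooth members $S,S'$ share ``the same'' anti-canonical cycle, and the Zariski decomposition is a numerical invariant that can only depend on the intersection matrix of the components of $C$ together with the way the $C_i$ sit inside $S$ as $(-\text{something})$-curves. Concretely, I would first show that the self-intersection numbers $C_i^2$ computed inside $S$ are the same as those computed inside $S'$. For this, note that the base curve $C$ is contained in every member of the pencil, and two general members $S,S'$ meet precisely along $C$ (since $|\mu_1^*F-E|$ is base-point-free after the blow-up $\mu_1$ of $C$ described in the introduction, or directly because $\Bs|F|=C$ as a set and the generic intersection is transverse along $C$). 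Hence for each component $C_i$ we have, inside $Z$,
\[
C_i\cdot S = C_i\cdot S' \quad\text{and}\quad F\cdot C_i = K_{S}^{-1}\cdot C_i = K_{S'}^{-1}\cdot C_i,
\]
and the adjunction/normal-bundle computation on $S$ versus $S'$ then forces $(C_i^2)_S=(C_i^2)_{S'}$ for every $i$ (the cross terms $C_iC_j$ for $i\neq j$ are already intrinsic to the abstract cycle $C$, being $0$ or $1$ according to Proposition \ref{prop:cycle}).

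Once the full intersection matrix $M=(C_iC_j)$ of the components of the anti-canonical cycle is known to be independent of the chosen smooth member $S$, the Zariski decomposition $C=P+N$ is determined purely by this matrix together with the class $[C]=\sum C_i$: indeed, the negative part $N$ is the unique maximal subdivisor $\sum_{i\in I}\alpha_iC_i$ (with $\alpha_i$ rational) such that the corresponding submatrix is negative definite and $P:=C-N$ is nef on the components, i.e.\ $PC_j\ge 0$ for all $j$; and the conditions ``$PC_j\ge 0$'' and ``$(C_iC_j)_{i,j\in I}$ negative definite'' are statements about $M$ alone. (This is precisely the content of conditions (i)--(iii) in the definition of the Zariski decomposition as recalled at the start of Section~\ref{s:Z}, combined with its uniqueness from \cite{Z62,Sa84}.) Therefore $P$, $N$, the integer $m_0$ of \eqref{eq:mzero}, the coefficients $l_i$, and the degree $d(S)=P^2$ all depend only on $M$ and hence only on the abstract configuration of $C$, not on $S$.

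The one point that needs a little care — and the step I expect to be the main obstacle — is verifying that two smooth members of the pencil really do meet exactly along $C$ with the ``expected'' local behaviour, so that the normal bundle argument goes through cleanly at the nodes of $C$. This is where one uses the blow-up diagram from the introduction: after blowing up $C$ and taking the simultaneous small resolution, one gets the morphism $f_1\colon Z_1\to\CP^1$ whose fibres are the strict transforms of the members of $|F|$, and two distinct fibres of $f_1$ are disjoint; pushing this down, the scheme-theoretic intersection $S\cap S'$ in $Z$ is supported on $C$ and its multiplicities along the $C_i$ are governed by $E$ and the resolution, uniformly in $S,S'$. This gives the equality of normal bundles $\mathcal{O}_{C_i}(C_i)_S\cong \mathcal{O}_{C_i}(C_i)_{S'}$, hence of the integers $C_i^2$, completing the argument. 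Alternatively, and perhaps more cleanly, one can avoid the blow-up entirely by observing that the short exact sequence \eqref{stseq} identifies $K_S^{-1}$ with $F|_S$ for \emph{every} smooth $S\in|F|$, so the component $C_i\subset C$ has the same degree $F\cdot C_i$ against the fundamental bundle independently of $S$; combined with adjunction $C_i^2 = -2 - K_S\cdot C_i + 2p_a(C_i) = -2 + F\cdot C_i$ (using $p_a(C_i)=0$ and $K_S=-C$ restricted appropriately, taking the node contributions into account via the cycle structure), the self-intersection $C_i^2$ is seen to be intrinsic, and then the previous paragraph concludes.
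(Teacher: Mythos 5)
Your proposal is correct and, in substance, identical to the paper's proof: the paper computes $C_i^2=-2+K_S^{-1}C_i=-2+FC_i$ (exactly your "alternative, cleaner" route via adjunction and $K_S^{-1}\simeq F|_S$), notes the off-diagonal entries are intrinsic to the cycle, and then observes that conditions (i)--(iii) characterising the Zariski decomposition of a divisor supported on $C$ depend only on the intersection matrix. The detour through the blow-up diagram and normal bundles in your first plan is unnecessary; the adjunction computation you give at the end is the whole argument.
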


\proof
Let $S\in |F|$ be any smooth member, which is irreducible as we have seen
above, and let $C\subset S$ the base locus of the
pencil $|F|$. For the self-intersection numbers in $S$ of components of
the cycle $C$ we have 
$$
C_i^2 = -2 + K_S\inv C_i = -2+FC_i\,,
$$
hence these self-intersection numbers in $S$ are 
independent of the choice of the smooth member $S$.
Obviously the intersection numbers between different components
of $C$ are independent of the choice of $S$ as well.

Let $C=P+N$ be the Zariski decomposition of
the cycle $C$ regarded as a curve in $S$.
Then $P$ is nef as a divisor in $S$. 
In particular $PC_i\ge 0$ for any index $i$.
As the intersection number $PC_i$ is determined by the coefficients of $P$ and
the intersection numbers $C_{j}C_{k}$, it is independent of the choice
of $S$ and it follows that $P$ is nef also in any other smooth member of
$|F|$. 
Similarly, $N$ is negative definite not only in $S$ but also 
in any other smooth member of $|F|$.
By the same reason, we have $PC_i=0$
for each $C_i$ which is included in $N$, not only in $S$ but 
also in any other smooth member of $|F|$.
Thus all the properties (i), (ii) and (ii) that characterise
the Zariski decomposition are satisfied for $P+N$ in all smooth
members of the pencil $|F|$.
Therefore, the Zariski decomposition
of the cycle $C$ is independent of the choice
of a smooth member $S\in |F|$.
This implies that the self-intersection number $P^2=d(S)$
is also independent of the choice of $S$.
Hence we obtain the proposition.
\proofend

\vsp
Note that, even though the Zariski decomposition $C=P+N$ is independent of
$S$, it is possible that the divisor $m_{0}P|_{C}$ in $\Pic(C)$ does depend
on $S$. The reason is that the restriction $m_{0}P|_{C}$ actually is
$\ms O_{S}(m_{0}P)\otimes\ms O_{C}$ and like
$\ms O_{S}(C)\otimes\ms O_{C}$, the normal bundle of $C$ in $S$, this may
depend on $S$. The independence of the Zariski decomposition just means that
the rational coefficients of $P$ and $N$ do not depend on $S$.

\vsp
Since the works of Poon \cite{P88}
and Campana \cite{C91-2}, it has long been well realised that 
the algebraic dimension $a(Z)$ of a twistor space $Z$
on $n\CP^2$ is closely related to 
the anti-Kodaira dimension of a smooth member of
the system $|F|$.
Actually, if $S$ is a smooth
member of $|F|$, the standard exact sequence
\begin{align}
0 \lras (m-1)F \lras mF\lras mK_S\inv\lras 0,\quad m>0
\end{align}
and the equality $a(Z)= \kappa(Z,F)$ 
imply the inequality
\begin{align}\label{ineq1}
a(Z)\le 1 + \kappa\inv(S).
\end{align}
Does equality hold in \eqref{ineq1} when $Z$ satisfies $\dim |F|=1$?
For instance, if at least one smooth member
$S$ of the pencil $|F|$ satisfies 
$\kappa\inv(S)=0$, we clearly have  the equality $a(Z)=1$.
The following proposition shows that the same is true if some $S\in|F|$
satisfies $\kappa\inv(S)=2$.

\begin{proposition}\label{prop:ak2}
If at least one smooth member $S$ of the pencil $|F|$ satisfies
$\kappa\inv(S)=2$, then equality holds in \eqref{ineq1},
and the twistor space $Z$ is Moishezon.
\end{proposition}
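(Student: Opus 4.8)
The plan is to show directly that $a(Z)=3$; since $a(Z)\le\dim_{\CC}Z=3$ this forces $Z$ to be Moishezon, and together with $\kappa\inv(S)=2$ it gives equality in \eqref{ineq1}. The mechanism is that the hypothesis $\kappa\inv(S)=2$ is very rigid: for $n\ge 4$ it forces the base curve $C=\Bs|F|$ of the pencil to be \emph{reducible}, and once this is known, Proposition \ref{prop:indep} propagates the equality $\kappa\inv=2$ to every smooth member of $|F|$, after which the fibration of the introduction together with \cite[Corollary 4.3]{Kr99} finishes the job.

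The first step is to rule out that $C$ is a smooth elliptic curve. Every smooth member of $|F|$ is a rational surface whose canonical class has self-intersection $8-2n$, which is $\le 0$ since $n\ge 4$ (this is deformation invariant in the pencil, and the real members are, by Pedersen--Poon, $\CP^1\times\CP^1$ blown up at $2n$ points). If $C$ were irreducible, a short computation with the Zariski decomposition of the single curve $C$ shows $d(S)=\max(K_S^2,0)=0$: indeed $C$ is nef exactly when $K_S^2\ge 0$, in which case $P=C$, while if $K_S^2<0$ one is forced to have $N=C$ and $P=0$. By Proposition \ref{prop:degree} this is incompatible with $\kappa\inv(S)=2$. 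Hence $C$ is singular, so by Proposition \ref{prop:cycle} it is a cycle of rational curves of the form \eqref{cycle1}, and in particular we are now in the situation of Proposition \ref{prop:indep}.

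It then remains to upgrade $\kappa\inv=2$ from the single member $S$ to all smooth members, and to conclude. By Proposition \ref{prop:indep} the degree and the Zariski decomposition of the cycle $C$ do not depend on the smooth member; in particular $d(S')=d(S)$ for every smooth $S'\in|F|$, and $d(S)>0$ by Proposition \ref{prop:degree} applied to the given $S$. Applying Proposition \ref{prop:degree} once more, this time to infer $\kappa\inv(S')=2$ from $d(S')>0$, we get $\kappa\inv(S')=2$ for every smooth member, in particular for a generic one. Now, $C$ being a cycle of smooth rational curves, blowing it up and taking a simultaneous small resolution of the resulting ordinary double points produces the fibration $f_1\colon Z_1\to\CP^1$ of the introduction, whose generic fibre is a smooth member of $|F|$, hence a rational surface of anti-Kodaira dimension $2$. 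Since the algebraic dimension is a bimeromorphic invariant and $Z_1$ is bimeromorphic to $Z$, \cite[Corollary 4.3]{Kr99} yields $a(Z)=a(Z_1)=1+2=3$, which completes the proof. I expect the only genuine obstacle to be the first step --- recognising that $\kappa\inv(S)=2$ cannot coexist with an irreducible anti-canonical curve when $n\ge 4$, which is precisely what makes the cycle structure, and hence Propositions \ref{prop:indep} and \cite[Corollary 4.3]{Kr99}, available.
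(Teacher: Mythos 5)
Your argument is correct and follows essentially the same route as the paper: rule out a smooth (irreducible) base curve via $C^2=K_S^2=8-2n\le 0$ and Proposition \ref{prop:degree}, use Proposition \ref{prop:indep} to transfer $d(S)>0$, hence $\kappa\inv=2$, to every smooth member of $|F|$, and then conclude $a(Z)=3$ from the fibration $f_1:Z_1\to\CP^1$ together with \cite[Corollary 4.3]{Kr99}. The only difference is that you spell out the Zariski decomposition of an irreducible anti-canonical curve explicitly, where the paper simply remarks that $C^2\le 0$ readily gives $\kappa\inv(S)\le 1$.
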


\proof
Let $S\in|F|$ be smooth with $\kappa\inv(S)=2$.
If the base curve $C$ of  the pencil $|F|$ is non-singular, we readily obtain
$\kappa\inv(S)\le 1$ from $C^2 = K_S^2 = F^3 =8-2n\le 0$.
Hence $C$ is singular, and by Proposition \ref{prop:cycle},
$C$ is a cycle of rational curves on $S$ as in \eqref{cycle1}.
Let $C=P+N$ be the Zariski decomposition of $C$ on $S$.
For the degree of $S$ we then have $d(S)>0$ by 
Proposition \ref{prop:degree}.
Since the degree is independent 
of the  choice of a smooth member $S$
of $|F|$ by Proposition \ref{prop:indep},
we obtain that $d(S')>0$ for any smooth member
$S'\in |F|$.
By Proposition \ref{prop:degree},
this implies $\kappa\inv(S')=2$ for any 
smooth $S'\in |F|$.

Once this is obtained, the equality $a(Z)=3$ follows
from a general estimate of the algebraic dimension from below 
in the case of a fibre space.
We refer to \cite[Proposition 4.1]{Fu04} and \cite[Corollary 4.3]{Kr99} for
details. 
\proofend

\vsp
In order to determine the algebraic dimension of $Z$ if
$\dim |F|=1$, we are left with the case where the pencil $|F|$ contains a 
smooth member $S$ satisfying $\kappa\inv(S)=1$.
This is well understood for the case $n=4$, which we next explain
in order to clarify the difference to the case $n>4$.

Let $Z$ be a twistor space on $4\CP^2$ satisfying $\dim|F|=1$
and let $C$ be the base locus of $|F|$.
Suppose that $Z$ has a smooth fundamental divisor of anti-Kodaira dimension
one. Using \eqref{ineq1}, this implies $a(Z)\le 2$. 
As we assume $n=4$, we can apply \cite[Theorem 6.2]{Kr98} to conclude that
$F$ is nef. 
Therefore, $K_{S}\inv\simeq F|_{S}$ is nef for each smooth $S\in|F|$,
i.e.\ $N=0$ in the Zariski decomposition of $C$ on $S$. 
As $K_{S}^{2}=0$, Corollary \ref{cor:finord} shows now that
$\kappa\inv(S)=1$ if and only if 
the line bundle $K_{S}\inv|_C$ is of finite order in $\Pic^0(C)$. Because
\begin{align}\label{n4iso}
K\inv_S\big|_C&\simeq (F|_S)\big|_C\simeq F|_C,
\end{align}
this order does not depend on $S$.
Hence the anti-Kodaira dimension of smooth members
of $|F|$ is constant.
From this, exactly like at the end of the proof of Proposition \ref{prop:ak2},
we obtain the following result. 

\begin{proposition}\label{prop:a2}
Let $Z$ be a twistor space over $4\CP^2$ satisfying
$\dim |F|=1$.
Suppose that there exists a smooth member $S$ 
of the pencil $|F|$ satisfying $\kappa\inv(S)=1$.
Then  $a(Z)=2$. 
\end{proposition}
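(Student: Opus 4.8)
The plan is to establish that \emph{every} smooth member of the pencil $|F|$ has anti-Kodaira dimension one, and then to read off $a(Z)=2$ from the fibre-space lower bound for the algebraic dimension, exactly as at the end of the proof of Proposition \ref{prop:ak2}. The starting point is the special feature of the case $n=4$: by \cite[Theorem 6.2]{Kr98} the fundamental line bundle $F$ is nef. Hence for every smooth $S\in|F|$ the bundle $K_S\inv\simeq F|_S$ is nef, so in the Zariski decomposition $C=P+N$ of the anti-canonical divisor $C$ of $S$ (a cycle of rational curves or a smooth elliptic curve, by Proposition \ref{prop:cycle}) one has $N=0$, i.e.\ $P=C\in|K_S\inv|$. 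Since moreover $P^{2}=K_S^{2}=F^{3}=8-2n=0$ and $P$ is integral, we are in the situation $P\ne0$, $P^{2}=0$, $m_0=1$.

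Next I would invoke Corollary \ref{cor:finord}: in this situation $\kappa\inv(S)=1$ if and only if $m_0P|_C=K_S\inv|_C$ has finite order in $\Pic^0(C)$. The essential point is that, by adjunction, $K_S\inv|_C\simeq (F|_S)|_C\simeq F|_C$, the restriction to the \emph{fixed} curve $C$ of the \emph{fixed} line bundle $F$ on $Z$, which is manifestly independent of the choice of $S$. Therefore, as soon as one smooth member satisfies $\kappa\inv(S)=1$, the bundle $F|_C$ acquires some finite order $\tau$, and the same criterion forces $K_{S'}\inv|_C$ to have order $\tau$, hence $\kappa\inv(S')=1$, for \emph{every} smooth $S'\in|F|$; in particular the generic member of the pencil has anti-Kodaira dimension one.

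Finally I would combine two inequalities. On one side, the inequality \eqref{ineq1} gives $a(Z)\le1+\kappa\inv(S)=2$. On the other, blowing up the base curve $C$ and taking a simultaneous small resolution produces a fibration $f_1:Z_1\to\CP^1$ whose generic fibre is a smooth rational surface of anti-Kodaira dimension one, so the lower bound for the algebraic dimension of a fibre space, \cite[Corollary 4.3]{Kr99} (see also \cite[Proposition 4.1]{Fu04}), yields $a(Z)\ge 1+1=2$; hence $a(Z)=2$. The only genuinely non-formal ingredients are external: the nefness of $F$ for $n=4$ from \cite[Theorem 6.2]{Kr98} --- precisely the assertion that breaks down for $n>4$ and so separates this proposition from Theorem \ref{thm} --- together with the fibre-space lower bound for $a(Z)$; once these are granted, the argument above is essentially bookkeeping. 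The one point needing slight care is the case where $C$ is a smooth elliptic curve rather than a cycle, since Corollary \ref{cor:finord} is phrased for anti-canonical cycles; there the torsion criterion for $\kappa\inv(S)=1$ holds by the same Zariski-decomposition reasoning, or by the classical theory of anti-canonical elliptic curves on rational surfaces.
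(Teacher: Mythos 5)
Your proposal is correct and follows essentially the same route as the paper: nefness of $F$ for $n=4$ via \cite[Theorem 6.2]{Kr98} gives $N=0$ and $K_S^2=0$, Corollary \ref{cor:finord} turns $\kappa\inv(S)=1$ into a torsion condition on $K_S\inv|_C\simeq F|_C$ which is independent of $S$, and the fibre-space estimate of \cite[Corollary 4.3]{Kr99} then yields $a(Z)=2$. Your closing remark on the smooth elliptic-curve case is a point the paper glosses over, but it does not change the argument.
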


We should mention that
a much more concrete result was obtained 
in \cite[Theorem 3.4]{CK99} without using
the general estimate on the algebraic dimension of a fibred space.
Namely, if $\tau$ denotes the order of 
the line bundle $K_S\inv|_C$ in $\Pic^0(C)$, 
then $\dim|\tau F| = \tau+1$, $\Bs\,|\tau F| = \emptyset$,
and the associated morphism $Z\to\CP^{\tau+1}$ provides
an algebraic reduction of $Z$ which is an elliptic fibration.
Strictly speaking, the paper \cite{CK99} assumes
that the base curve $C$ is smooth, but the proof equally works 
even if it is  a cycle of rational curves because $K_S\inv|_C$ is trivial on
each component of $C$ in the situation considered here.
Although we are assuming $\dim |F|=1$
here, the case $\tau=1$ actually happens,
and then $\dim |F|=2$, see \cite{HI00, Hon00}.

Now we are ready to state the main result of this article.
It says that, in contrast to the case $n=4$, 
when $n>4$ and $\kappa\inv(S)=1$ for some $S\in|F|$,
equality in \eqref{ineq1} never holds.

\begin{theorem}\label{thm:main}
Let $n>4$ and $Z$ be a twistor space on $n\CP^2$ 
satisfying $\dim |F|=1$.
Suppose there exists a smooth member $S$ 
of the pencil $|F|$ such that $\kappa\inv(S)=1$,
then  $a(Z)=1$. 
\end{theorem}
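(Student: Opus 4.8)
The plan is to argue by contradiction: suppose $n>4$, $\dim|F|=1$, and there is a smooth $S\in|F|$ with $\kappa\inv(S)=1$, yet $a(Z)=2$. By the outline in the introduction together with Proposition~\ref{prop:ak2}, under the assumption $a(Z)=2$ every smooth member $S'\in|F|$ must satisfy $\kappa\inv(S')=1$ (it cannot be $2$ by Proposition~\ref{prop:ak2}, and it cannot be $0$ for the generic member since $a(Z)=1+\kappa\inv$ for a generic member). Now $C$ cannot be smooth: if it were, the adjunction/genus computation would force $\kappa\inv(S)\le 1$ only in a way compatible with $\Pic^0(C)$-torsion, but then exactly as in the $n=4$ case (using $K_S\inv|_C\simeq F|_C$, independent of $S$) the anti-Kodaira dimension would be the same across the pencil and the finite-order criterion of Corollary~\ref{cor:finord} would again give $a(Z)=2$; the point that breaks for $n>4$ is the \emph{non-nefness} of $F$. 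So I would first reduce to the case that $C$ is a singular anti-canonical cycle as in Proposition~\ref{prop:cycle}, write $C=P+N$ for its ($S$-independent, by Proposition~\ref{prop:indep}) Zariski decomposition, and observe $P\ne 0$ (else $\kappa\inv(S)=0$ by Lemma~\ref{lem:Pzero}(i)) and $P^2=0$ (since $\kappa\inv(S)=1$, by Proposition~\ref{prop:degree}). Then Corollary~\ref{cor:finord} tells us $m_0P|_C$ has finite order $\tau$ in $\Pic^0(C)$ \emph{for this particular $S$}; and Proposition~\ref{prop:Z3} applies, giving an elliptic fibration $\phi:S\to\CP^1$ from $|\tau m_0P|$ and the key identity $|\nu\tau m_0P-C|=|(\nu-1)\tau m_0P|+(\tau m_0P-C)$.

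The heart of the argument must be to pull this picture back to $Z$ and exploit $n>4$. Using the exact sequences $0\to(m-1)F\to mF\to mK_S\inv\to 0$ and $H^1(\ms O_Z)=0$, together with the splitting $|mK_S\inv|=|\lfloor mP\rfloor|+\lceil mN\rceil$ from Proposition~\ref{prop:Z1}, I would track $h^0(mF)$ and try to show $\kappa(Z,F)=1$, i.e.\ $h^0(mF)$ grows only linearly in $m$. The crucial input is that $n>4$ forces $K_Z^2$-type numerical quantities (concretely $F^3=8-2n<0$, in fact $\le -2$) to be strictly negative, which makes $N\ne 0$ (so $K_S\inv$ is genuinely \emph{not} nef) and, more importantly, makes the divisor $\tau m_0P-C$ a \emph{nonzero} effective fixed part. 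The plan is then to compare, for varying smooth $S,S'\in|F|$, the restrictions $m_0P|_C$ as elements of $\Pic^0(C)$: since $\ms O_S(C)|_C$ is the normal bundle of $C$ in $S$ and this genuinely varies with $S$ (as noted in the remark after Proposition~\ref{prop:indep}), the order $\tau=\tau(S)$ can jump, and in particular it is unbounded as $S$ moves in the pencil. The contradiction I am aiming for is: if $\kappa\inv(S)=1$ for \emph{all} smooth $S\in|F|$, then for every $m$ the multiplication map $H^0((m-1)F)\to H^0(mF)$ would have to be an isomorphism for all large $m$ (forcing $a(Z)=1$, contrary to $a(Z)=2$), \emph{unless} some $|mK_S\inv|$ contributes a genuinely new section; but Proposition~\ref{prop:Z3}(iii)--(iv) says all higher pluri-anti-canonical sections on a \emph{fixed} $S$ are pulled back from the elliptic pencil on that $S$, and since the elliptic pencil (the class $\tau m_0P$) depends on $S$ through its torsion order, these cannot assemble into sections of $mF$ over $Z$.

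More precisely, the step I expect to carry the real weight is the following dichotomy. Consider the possibility that there IS a smooth $S_0\in|F|$ with $\kappa\inv(S_0)=1$ but a \emph{generic} $S$ with $\kappa\inv(S)=0$; the introduction says this is exactly the scenario that yields $a(Z)=1$, which is the desired conclusion. So the only thing to rule out is that $\kappa\inv(S)=1$ for \emph{generic} (hence, via Propositions~\ref{prop:indep} and~\ref{prop:degree}, essentially all) smooth $S$. To rule this out I would use that Proposition~\ref{prop:Z3}(i) gives, for each such $S$, a morphism $\phi_S:S\to\CP^1$ with $\phi_S^*\ms O(1)=\ms O_S(\tau(S)m_0P)$, and that $\tau(S)m_0P-C$ is effective and \emph{nonzero} (because $K_S^2=F^3=8-2n<0$, so $N\ne 0$ and the inequality $s<\tau$ in the proof of Proposition~\ref{prop:Z3} is strict with $\tau m_0P\ne C$). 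Then the fixed part $\lceil mN\rceil + (\text{fixed part of }|mP|)$ of $|mK_S\inv|$ is nonzero and grows, so the image of $H^0(mF)\to H^0(mK_S\inv)$ lands in a proper subsystem; chasing this through the exact sequence and summing a geometric-type series in $m$ I would get $h^0(mF)=O(m)$ with the precise constant forcing $\kappa(Z,F)\le 1$, i.e.\ $a(Z)\le 1$, hence $a(Z)=1$, contradicting $a(Z)=2$ and finishing the proof.

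\textbf{Main obstacle.} The delicate point is the bookkeeping that converts ``$\tau m_0P - C$ is a nonzero fixed divisor on each $S$, and the torsion order $\tau(S)$ is not locally constant on the pencil'' into a genuine bound on $h^0(mF)$ over $Z$: one must handle the base locus $\Bs|F|=C$ carefully, keep track of which sections of $mK_S\inv$ actually lift to $mF$ via \eqref{stseq}, and show that the $S$-dependence of the elliptic pencil obstructs assembling a two-parameter family of pluri-fundamental sections. This is precisely where $n>4$ (equivalently $F^3\le -2$, equivalently $N\ne 0$) is used in an essential way, in contrast to the $n=4$ case where $F$ is nef, $N=0$, and the pencils on all $S$ are ``the same'' so that Proposition~\ref{prop:a2} gives $a(Z)=2$.
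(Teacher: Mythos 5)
Your reduction to the case where $C$ is a singular cycle with $P\ne0$ and $P^2=0$, and your identification of the dichotomy (either some smooth member has $\kappa\inv=0$, giving $a(Z)=1$ at once from \eqref{ineq1}, or every smooth member has $\kappa\inv=1$ and this must be ruled out) match the paper's strategy. But the core step --- actually ruling out the second alternative --- is missing, and the mechanism you propose for it would not work. A large fixed part of $|mK_{S}\inv|$ does not reduce $h^0(mK_S\inv)$: by Proposition \ref{prop:Z1} this dimension equals $h^0(\lfloor mP\rfloor)$, which still grows linearly in $m$, so without further input the exact sequence only yields $h^0(mF)=O(m^2)$, perfectly consistent with $a(Z)=2$. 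Moreover, the obstruction you propose --- that the torsion order $\tau(S)$ of $m_0P|_C$ varies with $S$, so that the elliptic pencils ``cannot assemble'' --- is not the paper's mechanism. In the critical case the paper \emph{assumes} the order is the same finite $\tau$ for all smooth fibres (so the pencils are perfectly compatible across the family) and still derives $a(Z)=1$, contradicting $a(Z)=2$; in the complementary case where the order is not constant, a continuity argument in $\Pic^0(C)\simeq\CC^*$ immediately produces a smooth member whose restricted class has infinite order, hence $\kappa\inv=0$ by Corollary \ref{cor:finord}, and $a(Z)=1$ follows from \eqref{ineq1} with no section-counting at all. Your assertion that $\tau(S)$ is unbounded along the pencil is neither proved nor actually used correctly.

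What is needed, and entirely absent from your sketch, is the global construction on the resolved space: blow up the cycle $C$, take a real simultaneous small resolution $Z_1$ of the resulting $2k$ ordinary double points, and study the line bundles $\ms M(r,\rho)=f_1^*\ms O(r)+\rho m_0\bm P$. The decisive input is the vanishing $H^0(E,\ms M(r,\rho)|_E)=0$ for $\rho>0$ (Proposition \ref{prop:keyvan}), whose proof rests on the reducible members $S_1^++S_1^-$ of the pencil (Proposition \ref{prop:d1}), on the exceptional curve $\Delta_1$ of the small resolution, and on the strict inequality $l_1>l_2$ supplied by Proposition \ref{prop:Z2}\,(iv) --- this is where $n>4$, i.e.\ $K_S^2<0$, enters in an essential way beyond merely making $N\ne 0$. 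That vanishing allows $E$, and then $\tau m_0\bm P-E$, to be peeled off $|\ms M(r,\nu\tau)|$ inductively (Proposition \ref{prop:Lnu2}), showing that $|\nu\tau m_0F|$ has one-dimensional image for every $\nu$ and hence $\kappa(Z,F)=1$. None of the proposed bookkeeping on $Z$ itself substitutes for this step.
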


If such a member $S\in |F|$ exists,
inequality \eqref{ineq1} implies $a(Z)\le 2$.
On the other hand we have $a(Z)\ge 1$ from the 
presence of the pencil.
Thus for the proof of Theorem \ref{thm:main}
it is enough to show $a(Z)\neq 2$.
We emphasize here that in Theorem \ref{thm:main} we are {\em not} 
assuming that the member $S$ is 
generic in the pencil $|F|$.
Indeed, by \cite[Corollary 4.3]{Kr99}, under the assumption
of Theorem \ref{thm:main}, 
if $S$ is  generic in the pencil $|F|$,
equality in \eqref{ineq1} holds, which means
\begin{align}\label{}
a(Z) = 1 + \kappa\inv(S)=2.
\end{align}
Hence Theorem \ref{thm:main} implies that 
{\em the member $S\in |F|$ in the theorem
cannot be generic in the pencil $|F|$}.
In other words, even if a twistor space $Z$ on $n\CP^2$,
$n>4$, with $\dim |F|=1$ possesses a smooth member $S\in|F|$
which satisfies $\kappa\inv(S)=1$, 
a {\em generic} member $S'$ of the pencil has to satisfy 
$\kappa\inv(S')=0$. 

We note that, by investigating small deformations of twistor spaces
of Joyce metrics \cite{J95}, the existence of twistor spaces that fulfil the
properties of Theorem \ref{thm:main} was shown in \cite{Fu04}. Our result
shows that they have algebraic dimension $1$.

The proof of Theorem \ref{thm:main} will be completed at the end of this
section. 
In preparation for this proof, we first note that under the assumptions of
Theorem \ref{thm:main} the base curve $C$ of $|F|$ cannot be smooth and
moreover $k\ge 2$ holds in 
\eqref{cycle1}.

\begin{proposition}\label{prop:cycle2}
Let $Z$ and $S$ be as in Theorem \ref{thm:main},
and $C$ be the unique anti-canonical curve on $S$.
Then  $C$  is a cycle of rational 
curves as in \eqref{cycle1} with $k\ge 2$.
\end{proposition}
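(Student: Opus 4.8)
The plan is to rule out the two excluded configurations one at a time, in each case deriving a contradiction with the hypothesis $\kappa\inv(S)=1$ together with the numerical constraint coming from $n>4$. First I would dispose of the case where $C$ is non-singular. If $C$ were a smooth elliptic curve, then $C^2=K_S^2=F^3=8-2n$, and since $n>4$ this gives $C^2<0$, indeed $C^2\le -2$. Now $C=K_S\inv$ is the unique anti-canonical curve and it has negative self-intersection, so in the Zariski decomposition $C=P+N$ the curve $C$ itself must appear in $N$ (a single irreducible divisor of negative self-intersection is its own negative part), forcing $P=0$. By Lemma \ref{lem:Pzero}(i) this yields $\kappa\inv(S)=0$, contradicting $\kappa\inv(S)=1$. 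Hence $C$ is singular, and by Proposition \ref{prop:cycle} it is a cycle of rational curves of the form \eqref{cycle1}; in particular the number of components is even, equal to $2k$ for some $k\ge 1$.

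It remains to exclude $k=1$, i.e.\ the case $C=C_1+\ol C_1$ with $C_1$ and $\ol C_1$ a conjugate pair of smooth rational curves meeting in two points (a cycle of length two). Here I would compute directly with the intersection matrix. Adjunction on $S$ gives $C_1^2 = -2 + K_S\inv C_1 = -2 + FC_1$, and similarly $\ol C_1^2 = -2 + F\ol C_1 = C_1^2$ since $F$ is real; write $a:=C_1^2=\ol C_1^2$. The two components meet in two points, so $C_1\ol C_1 = 2$, and from $C^2 = K_S^2 = 8-2n$ we get $2a + 2\cdot 2 = 8-2n$, hence $a = 2-n-... $ wait — more carefully, $C^2 = C_1^2 + 2 C_1\ol C_1 + \ol C_1^2 = 2a+4 = 8-2n$, so $a = 2-n$. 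Since $n>4$ we have $a = 2-n \le -3 < 0$. The $2\times 2$ intersection matrix of the cycle is $\begin{pmatrix} a & 2 \\ 2 & a\end{pmatrix}$, with eigenvalues $a+2$ and $a-2$; both are negative since $a\le -3$, so the intersection matrix is negative definite. But then in the Zariski decomposition $C=P+N$ the whole cycle $C$ lies in $N$, so $P=0$, and Lemma \ref{lem:Pzero}(i) gives $\kappa\inv(S)=0$, again contradicting $\kappa\inv(S)=1$. Therefore $k\ge 2$.

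The main obstacle — though it is a mild one — is making sure the numerical input $C^2 = K_S^2 = F^3 = 8-2n$ is correctly invoked and that the sign bookkeeping for the length-two cycle is airtight; the key point throughout is that $n>4$ forces $C^2<0$ in \emph{both} excluded cases, which collapses the Zariski decomposition to $P=0$ and hence $\kappa\inv(S)=0$ by Lemma \ref{lem:Pzero}(i). One should also note that the argument genuinely uses $n>4$: for $n=4$ one has $C^2=0$ and these configurations are not excluded, which is exactly why Proposition \ref{prop:a2} can hold in that case. I would present the two cases as two short paragraphs, each ending in the contradiction with $\kappa\inv(S)=1$.
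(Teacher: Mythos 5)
Your proposal is correct and follows essentially the same route as the paper: both cases are reduced to $P=0$ in the Zariski decomposition via the negativity forced by $K_S^2=8-2n<0$ (for $k=1$ via negative definiteness of $\begin{pmatrix}2-n&2\\2&2-n\end{pmatrix}$), whence $\kappa\inv(S)=0$. The only cosmetic caveat is that Lemma \ref{lem:Pzero}(i) is stated for anti-canonical \emph{cycles}, so in the smooth elliptic case you should cite Proposition \ref{prop:Z1} (or the direct fixed-component argument) rather than the lemma itself, as the paper implicitly does.
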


\proof
By Proposition \ref{prop:cycle}, the base curve $C$ is either a smooth
elliptic curve 
or a cycle of rational curves as in \eqref{cycle1}.
If $C$ is smooth, from $K^2=8-2n<0$
we easily obtain $h^0(mK_S\inv)=1$ for any $m>0$, but then $\kappa\inv(S) = 0$.
Therefore $C$ cannot be smooth.

If $C$ is as in \eqref{cycle1} with $k=1$, then $C= C_1 + \ol C_1$, and
$C_1\ol C_1=2$ as $C$ is a cycle.
Together with $K^2 = 8-2n$, this means $C_1^2=\ol C_1^2=2-n$.
Hence the intersection matrix for the cycle $C$ becomes
$$
\begin{pmatrix}
2-n & 2 \\ 2 & 2-n
\end{pmatrix}.
$$
If $n>4$, this is negative definite. 
Hence $P=0$ in the Zariski decomposition $C=P+N$ of $C$,
which implies  $\kappa\inv(S)=0$, see Lemma \ref{lem:Pzero} (i).
Therefore $k\ge 2$.
\proofend

\vsp
Let $Z, S$ and $C$ be as in 
Theorem \ref{thm:main} and Proposition \ref{prop:cycle2}
and let $\mu:\hat Z\to Z$ be the blow-up with centre $C$.
The space $\hat Z$ has singularities, but we discuss this
later.
Let $E_i$ and $\ol E_i$ 
($1\le i\le k$) be the exceptional divisors
over the curves $C_i$ and $\ol C_i$, respectively.
Write 
$$
E:=(E_1+\dots+ E_k) +(\ol E_1+\dots + \ol E_k)
$$
for the sum of all exceptional divisors.
Any two distinct smooth members $S'$ and $S''$ of the pencil $|F|$ 
intersect transversally along the cycle $C$
in the sense that $S''|_{S'}=C$ as a divisor on $S'$.
Therefore, the blow-up $\mu:\hat Z\to Z$ composed with the rational
map $Z\to \CP^1$ associated to the pencil $|F|$
is a morphism $$\hat f:\hat Z\to \CP^1,$$
the fibres of which are the strict transforms of the members of 
the pencil $|F|$.
This is nothing but an elimination of the indeterminacy locus
of the rational map $Z\to \CP^1$.
We will use the same letters to denote divisors in $Z$ and their strict
transforms in $\hat Z$. 

Recall from Proposition \ref{prop:d1} that the pencil $|F|$ has 
exactly $k$ reducible members $S_i^++S_i^-$,
and that  $k\ge 2$ by Proposition \ref{prop:cycle2}.
Let $\lmd_i\in\CP^1$, $1\le i\le k$, be the point
over which the reducible fibre $S_i^+\cup S_i^-$ is lying.
Evidently we have the basic relation 
\begin{align}\label{basic1}
\hat f^* \ms O(1) \simeq \mu^* F - E.
\end{align}
For any index $i$, the exceptional divisor $E_i$ 
comes with two fibrations $\hat f|_{E_i}:E_i\to \CP^1$
and $\mu|_{E_i}: E_i\to C_i\simeq\CP^1$.
Both are clearly $\CP^1$-bundles, and fibres of different fibrations
intersect transversally at one point.
Hence we obtain an isomorphism $E_i\simeq\CP^1\times\CP^1$
for any $i$.
The same is true for the real conjugate divisor $\ol E_i$ from reality.

Since the centre $C$ of the blow-up $\mu$ is singular, being a cycle of
rational curves, the space $\hat Z$ has ordinary double points over 
the singularities of the cycle $C$.
Concretely, these are exactly the points where the four divisors
\begin{align}\label{odp1}
S_i^+ , S_i^-, E_i \,{\text{ and }} \, E_{i+1}
\quad (1\le i\le k)
\end{align}
meet and the real conjugate points of these,
see Figure \ref{fig1} where these points for $i=1$ are indicated
by black circles.
Thus $\hat Z$ has precisely $2k$ ordinary double points in total.
Since each ordinary double point admits two small resolutions,
the number of simultaneous small resolutions
of all the double points, which are compatible with
the real structure, is $2^k$ in total.
Let $Z_1\to \hat Z$ be any one of these small resolutions.
We denote by $\mu_1:Z_1\to Z$ the composition
$Z_1\to \hat Z\to Z$ of 
 the small resolution and  the blow-up.
We continue to use the same letters $E_i$ and $\ol E_i$ to denote
the strict transforms of the exceptional divisors in $Z_1$.
Also, $E$ denotes the total sum $(E_1 + \dots + E_k)
+ (\ol E_1+ \dots + \ol E_k)$ in $Z_1$.
We write $f_1:Z_1\to\CP^1$ for the composition
$Z_1\to \hat Z\to\CP^1$.
From \eqref{basic1} we obtain the relation 
\begin{align}\label{basic2}
 f_1^* \ms O(1) \simeq \mu_1^* F - E,
\end{align}
and $f_1$ has reducible fibre $S_i^+\cup S_i^-$ over
the point $\lmd_i$, $1\le i\le k$.

If we regard the cycle $C$ as a curve on a real smooth member of the pencil
$|F|$, it has a Zariski decomposition with $P$ and $N$ both real
\begin{align}\label{Z22}
C = P + N.
\end{align}
By Proposition \ref{prop:indep}, this Zariski decomposition is independent of 
the choice of the smooth member.
As in Section \ref{s:Z}, let $m_0$ be the smallest integer for which $m_0P$ is
integral, and write
$$
m_0P = l_1 C_1 + \dots + l_k C_k + l_1 \ol C_1 + \dots + l_k\ol C_k.
$$
For later use, we use these coefficients $l_{i}$ and define a $\QQ$-divisor
$\bm P$ on $Z_1$ by 
the equation
\begin{align}
m_0 \bm P = l_1 E_1 + \dots + l_k E_k + l_1 \ol E_1 + \dots + l_k\ol E_k.
\end{align}
Similarly we define a $\QQ$-divisor $\bm N$ on $Z_1$ by the equation
\begin{align}\label{N}
\bm P + \bm N =  E.
\end{align}
Thus $\bm P$ and $\bm N$ may be regarded as enlargements of 
the nef part and the negative part, respectively, 
of the Zariski decomposition \eqref{Z22} to divisors in $Z_1$. 
The key feature of these divisors is that if we restrict \eqref{N} to a smooth
fibre of the projection $\mu_1:Z_1\to\CP^1$,
we obtain the Zariski decomposition \eqref{Z22}.

Because we assume $\kappa^{-1}(S)=1$ for a given $S\in|F|$, we obtain from
Proposition \ref{prop:degree} and Lemma \ref{lem:Pzero} (i) that $P^{2}=0$ and
$P\ne 0$ in $S$. Because of Proposition \ref{prop:indep} the same is true for
all smooth fundamental divisors. 
In particular, we can apply
Proposition \ref{prop:Z2} to get $l_i>0$ for all $i$ and $l_{i}=1$ for 
at least one $i$.
Moreover, by Proposition \ref{prop:Z2} (iv),
as $n>4$, the $k$ integers $l_i$ cannot all be equal to each other.
Continuing to write $l_{k+1}:= l_1$, it follows that there is an
index $i$ for which $l_i>l_{i+1}$ holds. 
After a cyclic permutation of the indices we may assume that $l_1>l_2$.

For arbitrary integers $r,\rho$ we define a line bundle $\ms M(r,\rho)$
on $Z_1$ by
\begin{align}\label{M1}
\ms M(r,\rho) := f_1^*\ms O(r) + \rho m_0\bm P. 
\end{align}

\begin{proposition}\label{prop:rest1}
Let $S'$ be any smooth fibre of the fibration $f_1:Z_1\to\CP^1$,
and identify the pair $(S',S'\cap E)$ in $Z_{1}$ with
$(S',C)$ in $Z$ by the map $\mu_1:Z_1\to Z$.
Then for any $r\in \ZZ$ and $\rho\in\ZZ$, we have
\begin{align}\label{rest3}
\ms M(r,\rho)
\big|_{S'\cap E}
\in \Pic^0(C).
\end{align}
\end{proposition}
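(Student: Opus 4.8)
The plan is to write $\ms M(r,\rho)=f_1^*\ms O(r)+\rho m_0\bm P$, discard the first summand, identify the restriction of the second summand with an honest divisor on the cycle $C$, and conclude with a one-line intersection number computation. First I would reduce the statement: since every component of $C$ is a smooth rational curve, a line bundle on $C$ that has degree zero on each component $C_j$ (and each $\ol C_j$) is trivial on each component and hence lies in $\Pic^0(C)$, so it suffices to check this degree condition. Because $S'$ is a fibre of $f_1$, the morphism $f_1|_{S'}$ is constant, so $f_1^*\ms O(r)|_{S'}\cong\ms O_{S'}$ and that summand restricts trivially to $S'\cap E$. Thus it is enough to show that $\rho m_0\bm P|_{S'\cap E}$ has degree zero on each component of $C$.

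For this I would invoke the property recorded just after \eqref{N}: restricting \eqref{N} to the smooth fibre $S'$ recovers the Zariski decomposition \eqref{Z22}, which amounts to the fact that, under the identification of $(S',S'\cap E)$ with $(S',C)$ in the statement, one has $E_i|_{S'}=C_i$ and $\ol E_i|_{S'}=\ol C_i$ as reduced divisors on $S'\cong S$. Consequently $m_0\bm P|_{S'}=l_1C_1+\dots+l_kC_k+l_1\ol C_1+\dots+l_k\ol C_k=m_0P$ as a divisor on $S'$, so $\ms M(r,\rho)|_{S'}\cong\ms O_{S'}(\rho m_0P)$ and, restricting once more to the curve $C\subset S'$, $\ms M(r,\rho)|_{S'\cap E}\cong\ms O_{S'}(\rho m_0P)|_C$. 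Its degree on a component $C_j$ equals $\rho m_0\,(PC_j)$. Since we are in the case $\kappa\inv(S)=1$ we have $P^2=0$ (and $P\ne0$), so Lemma \ref{lem:Pzero} (ii), applied to the cycle \eqref{cycle1}, gives $PC_j=0$ for every component; the same holds for the $\ol C_j$ by the symmetry of \eqref{cycle1}. Hence the restriction has degree zero on every component of $C$, which is the assertion.

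The one point that needs care is the identification $E_i|_{S'}=C_i$ with multiplicity one, i.e.\ that the blow-up $\mu$ and the subsequent simultaneous small resolution behave well near the nodes of the cycle, where $\hat Z$ has ordinary double points. This causes no trouble because a smooth fibre of $f_1$ lies over a point different from $\lmd_1,\dots,\lmd_k$, hence avoids all $2k$ ordinary double points of $\hat Z$ and the entire exceptional locus of $Z_1\to\hat Z$; thus near $S'$ the morphism $\mu_1$ is simply the blow-up of $Z$ along the normal crossing curve $C$, and a short computation in local coordinates at a node (blow up $C=V(z,xy)$ and intersect the exceptional divisors with the strict transform of a generic member $\{z=\lmd xy\}$ of the pencil) shows that $E_i\cap S'$ is the reduced strict transform of $C_i$. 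Since this is exactly the ``key feature'' already stated in the text, I would simply invoke it rather than reproduce the chart computation.
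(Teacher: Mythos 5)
Your proposal is correct and follows essentially the same route as the paper: discard $f_1^*\ms O(r)$ because $S'$ is a fibre, identify $\rho m_0\bm P|_{S'}$ with $\rho m_0P$ via $E_i\cap S'\simeq C_i$, and conclude from $P^2=0$ and Lemma \ref{lem:Pzero}~(ii) that all component degrees vanish. The only difference is that you spell out the multiplicity-one identification $E_i|_{S'}=C_i$ near the nodes, which the paper takes for granted.
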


\proof
Recall that $\Pic^{0}(C)$ denotes the group of those line bundles on $C$ that
are of degree zero on each component of $C$.
By the identifications $E_i\cap S'\simeq C_i$ and 
$\ol E_i\cap S' \simeq \ol C_i$
induced by the birational morphism $\mu_1$, we obtain that 
the restriction of $\ms M(r,\rho)$ to $S'$ is isomorphic to 
$
\rho m_0 P.
$
Here we have disposed of $f_1^*\ms O(r)$ since $S'$ is a fibre of $f_1$.
Because $P^{2}=0$ on $S'$, Lemma \ref{lem:Pzero} (ii) implies that  
$m_0P|_{S'\cap E}\in \Pic^0(C)$, hence so is $\rho m_0P|_{S'\cap E}$.
\proofend

\vsp

We now take a closer look at the ordinary double point
$S_1^+\cap S_1^-\cap E_1\cap E_2$ in $\hat Z$.
Without loss of generality, we may suppose that
$S_1^+$ and $S_1^-$ are distinguished 
by the property that the intersection  
$S_1^+\cap E_1$ is (not a curve but) a point, see Figure \ref{fig1}.

\begin{figure}
\includegraphics{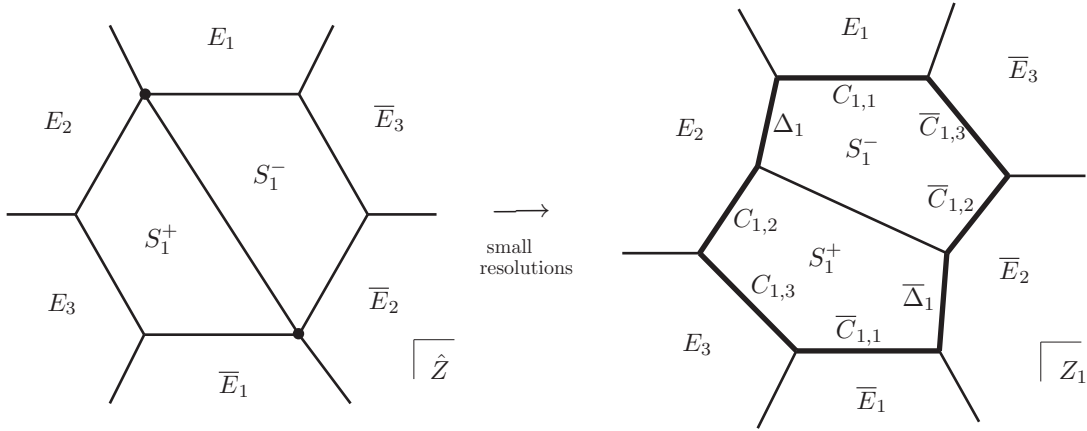}
\caption{
The small resolution of the ordinary double points on 
the fibre $\hat f\inv(\lmd_1)$ depicted for $k=3$.
The bold lines in the right figure mark the cycle
$\ms C$ that is defined below.
}
\label{fig1}
\end{figure}

The small resolution $Z_1\to \hat Z$ blows up one of
the pairs $\{S_1^-,E_2\}$ or $\{S_1^+, E_1\}$.
Because there is no essential difference between these two,
we may suppose that the pair $\{S_1^-,E_2\}$ is blown up.
We denote the exceptional curve over the point $S_1^+\cap E_1$ by $\Delta_1$. 

Since $\mu$ blows up the cycle $C$,
we cannot speak about strict transforms of the components $C_i$ of $C$
into $Z_1$. 
But, if we specify a fibre of $f_1:Z_1\to\CP^1$, we can.
In particular, we denote the strict transforms of $C_{i}$ and $\ol C_{i}$
in the fibre $f_1\inv(\lmd_1)$ by
\begin{align}\label{c1i}
C_{1,i}
\quad{\text{and}}\quad
\ol C_{1,i}, \quad 1\le i\le k.
\end{align}
These are identified with $C_i$ and $\ol C_i$, respectively,
by the birational morphism $\mu_1:Z_1\to Z$.
The first  index, $1$, indicates that the curves are included in 
the fibre $f_1\inv(\lmd_1)=S_1^+\cup S_1^-$, and 
the second  index reflects that they correspond to
$C_i$ and $\ol C_i$ respectively.
Unlike in the original space $Z$, the union of all the curves 
\eqref{c1i} is not a cycle 
because the exceptional curves  $\Delta_1$ and $\ol{\Delta}_1$
are inserted between $C_{1,1}$ and $C_{1,2}$, and between
$\ol C_{1,1}$ and $\ol C_{1,2}$, respectively, see Figure \ref{fig1}.
Hence the union of all the curves \eqref{c1i}
consists of two connected chains
\begin{align}\label{chain1}
C_{1,2}\cup C_{1,3}\cup\dots\cup C_{1,k}\cup \ol C_{1,1}
\quad{\text{and}}\quad
\ol C_{1,2}\cup \ol C_{1,3}\cup\dots\cup \ol C_{1,k}\cup  C_{1,1},
\end{align}
and by adding $\Delta_1$ and $\ol{\Delta}_1$ we get
a cycle of rational curves, which we denote by $\ms C$.
We clearly have
\[
\ms C=f_1\inv(\lmd_1)\cap E
\]
in the sense that $\ms C$ is the restriction of the divisor $E$ to the fibre 
$f_1\inv(\lmd_1)$ or, equivalently, $\ms C$ is an element of the linear system
$|(f_1|_{E})^{\ast}\ms O(1)|$ on the surface $E$.

\vsp
The intersection numbers of the line bundle $\ms M(r,\rho)$ with the 
components of the cycle $\ms C$ will play an essential role in the 
subsequent proofs.

\begin{lemma}\label{lemma:int1}
We have
\begin{align}\label{int100}
\ms M(r,\rho)\cdot C_{1,i} =
\begin{cases}
\hfil 0 & i\neq 2\\
-\rho (l_1-l_2) & i=2.
\end{cases}
\end{align}
and 
\begin{align}\label{int101}
\ms M(r,\rho)\cdot \Delta_1 = \rho(l_1-l_2).
\end{align}
\end{lemma}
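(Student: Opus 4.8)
The plan is to compute both intersection numbers directly from the definition \eqref{M1}, namely $\ms M(r,\rho) = f_1^*\ms O(r) + \rho m_0 \bm P$, by exploiting the fact that $C_{1,i}$ and $\Delta_1$ are all contained in a single fibre $f_1\inv(\lmd_1)$. Consequently $f_1^*\ms O(r)$ restricts trivially to each of these curves, so $\ms M(r,\rho)\cdot C_{1,i} = \rho\, m_0\bm P\cdot C_{1,i}$ and $\ms M(r,\rho)\cdot\Delta_1 = \rho\, m_0\bm P\cdot\Delta_1$; it therefore suffices to compute $m_0\bm P\cdot C_{1,i}$ and $m_0\bm P\cdot\Delta_1$, and the stated coefficient $\rho(l_1-l_2)$ (up to sign) will fall out.

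First I would use the description $m_0\bm P = \sum_i l_i(E_i + \ol E_i)$ and the fact that $m_0\bm P$ meets $C_{1,i}$ only through the exceptional divisors $E_j$, $\ol E_j$. The curve $C_{1,i}$ is the strict transform in the fibre $f_1\inv(\lmd_1)$ of the component $C_i$ of the cycle $C$; it lies on the $\CP^1$-bundle $E_i\simeq\CP^1\times\CP^1$ as a section of $\mu|_{E_i}$, and it meets $E_{i-1}$ and $E_{i+1}$ each once, transversally, at the images of the nodes $C_{i-1}\cap C_i$ and $C_i\cap C_{i+1}$ (with the crucial exception caused by the small resolution near $\lmd_1$, where $\Delta_1$ is inserted). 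The restriction $E_i|_{C_{1,i}}$ is, via $\mu_1$, the normal-bundle-type contribution $C_i^2$ inside $S$, so that $m_0\bm P\cdot C_{1,i}$ reproduces $m_0 P\cdot C_i$ on a smooth fibre \emph{except} for the curves adjacent to the blown-up node. Since $m_0 P\cdot C_i = 0$ for all $i$ by Lemma \ref{lem:Pzero}~(ii) (we are in the case $P^2=0$), this gives $\ms M(r,\rho)\cdot C_{1,i}=0$ for $i\neq 2$ immediately. For $i=2$ the difference is that the intersection point $C_{1,1}\cap C_{1,2}$ has been separated by the exceptional curve $\Delta_1$: on $f_1\inv(\lmd_1)$ the curve $C_{1,2}$ no longer meets $C_{1,1}$ but instead meets $\Delta_1$, and the bookkeeping shows $m_0\bm P\cdot C_{1,2} = m_0 P\cdot C_2 - l_1\,(C_1 C_2) + 0\cdot(\Delta_1\text{-contribution})$, i.e.\ the term $l_1\cdot 1$ that would have come from $E_1$ is replaced by a contribution of $\Delta_1$, which lies in neither $\bm P$ nor the strict transform of $E_1$; tracking signs via the self-intersection change $C_{1,2}^2 = C_2^2$ (the node at the other end is untouched for $k\ge 3$, and the $k=2$ case needs separate care) yields exactly $-\rho(l_1-l_2)$.

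For \eqref{int101}, I would note that $\Delta_1$ is the exceptional curve of the small resolution lying over the point $S_1^+\cap E_1$, and it meets precisely the divisors involved in the resolution: it is a fibre of the ruling on $E_1$ through the relevant node, so $E_1|_{\Delta_1}$ has degree one (or whatever the small-resolution geometry dictates), $\ol E_{k+1}=E_1$-type adjacencies contribute, while $\Delta_1$ is disjoint from all $E_j$ with $j\neq 1,2$ and from $\ol E_j$. Restricting $m_0\bm P = \sum l_i(E_i+\ol E_i)$ to $\Delta_1$ then leaves only the $l_1 E_1$ and $l_2 E_2$ terms, and from the local model of the ordinary double point and its small resolution one reads off $E_1\cdot\Delta_1$ and $E_2\cdot\Delta_1$, giving $m_0\bm P\cdot\Delta_1 = l_1 - l_2$ up to the conventions fixed in Figure \ref{fig1}, whence $\ms M(r,\rho)\cdot\Delta_1 = \rho(l_1-l_2)$. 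A useful consistency check, which I would actually use to pin down the signs rather than wrestle with the local coordinates, is that $\ms C = C_{1,1}\cup\dots\cup\ol\Delta_1$ is linearly equivalent on the surface $E$ to a smooth fibre of $f_1|_E$, which is a disjoint union of $(-2)$-cycles' worth of curves on which $m_0\bm P$ has degree zero on every component; hence $\ms M(r,\rho)\cdot\ms C = 0$, and summing \eqref{int100} over all components plus the $\Delta_1,\ol\Delta_1$ contributions forces the $i=2$ value and the $\Delta_1$ value to be negatives of each other, consistent with \eqref{int100}–\eqref{int101}.

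The main obstacle will be the careful sign and multiplicity bookkeeping around the small resolution: one must be consistent about which pair $\{S_1^-,E_2\}$ is blown up, how the strict transforms $C_{1,i}$ sit on the quadric surfaces $E_i\simeq\CP^1\times\CP^1$ after this blow-up, and what the intersection numbers $E_j\cdot C_{1,i}$ and $E_j\cdot\Delta_1$ become in $Z_1$ (as opposed to in $Z$ or $\hat Z$). I would handle this by fixing the local analytic model of the ordinary double point with its two small resolutions explicitly, computing the three or four relevant local intersection numbers once, and then assembling the global answer; the degenerate case $k=2$ (where $C_{1,2}=\ol C_{1,1}$-adjacency wraps around and both ends of $C_{1,2}$ are affected) should be checked separately but gives the same formula.
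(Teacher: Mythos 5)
Your overall framework is the paper's: reduce to $\rho m_0\bm P\cdot(\text{curve})$ since the curves lie in a fibre of $f_1$, get the vanishing for $i\ne 2$ from the fact that $\ms M(r,\rho)$ has degree zero on every component of a generic fibre of $f_1|_E$ (Proposition \ref{prop:rest1}), and treat the two curves at the resolved node specially. But the one computation that carries the content of the lemma — the value $-\rho(l_1-l_2)$ for $i=2$ — is not correctly executed. Your explicit bookkeeping, $m_0\bm P\cdot C_{1,2}=m_0P\cdot C_2-l_1(C_1C_2)$, evaluates to $-l_1$, and the correction you invoke, ``the self-intersection change $C_{1,2}^2=C_2^2$'', is false in the relevant ambient surface: the missing term $+l_2$ comes precisely from
$(C_{1,2})^2_{S_1^+}=(C_2)^2_S+1$, which follows by adjunction from $K_{S_1^+}\inv=(F+S_1^-)|_{S_1^+}$ and $S_1^-C_2=L_1C_2=1$. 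With $E_1\cdot C_{1,2}=0$, $E_3\cdot C_{1,2}=1$, $E_2\cdot C_{1,2}=(C_2)^2_S+1$ and the relation $l_1-a_2l_2+l_3=0$ (from $PC_2=0$ on $S$), one gets $l_2-l_1$; without the $+1$ shift one gets $-l_1$. This adjunction step is the heart of the argument and is absent from (indeed contradicted by) your proposal. A smaller but real gap of the same kind: $C_{1,1}$ is also adjacent to the resolved node, so it is excluded by your own caveat from the ``immediate'' cases, yet you claim the vanishing for it without argument (here the $+1$ from $(C_1)^2_{S_1^-}=(C_1)^2_S+1$ is cancelled by the small resolution blowing up a point of $S_1^-$; alternatively, $C_{1,1}$ is the \emph{entire} fibre of $f_1|_{E_1}$ over $\lmd_1$, hence homologous in $E_1$ to $S'\cap E_1$, which is how the paper disposes of all $i\ne2$ at once).

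Your fallback route — compute $\ms M(r,\rho)\cdot\Delta_1$ from the local model of the conifold and then use $\ms M(r,\rho)\cdot\ms C=0$ — is viable in principle, but as written it does not close the gap: the relation $\ms M\cdot\ms C=0$ only forces $\ms M\cdot C_{1,2}=-\ms M\cdot\Delta_1$, so it cannot ``pin down'' either value unless one of the two is computed correctly, and the $\Delta_1$ computation is left to an unexecuted local analysis in which you moreover misplace $\Delta_1$ (you call it ``a fibre of the ruling on $E_1$''; in fact $\Delta_1\subset E_2\cap S_1^-$, whence $E_2\cdot\Delta_1=-1$ and $E_1\cdot\Delta_1=+1$ — with your placement the signs would come out reversed). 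The paper avoids the local model entirely: it first establishes \eqref{int100} by the adjunction computation above, and then obtains \eqref{int101} for free from the homology $\Delta_1+C_{1,2}\sim S\cap E_2$ in $E_2$ together with $(m_0PC_2)_S=0$.
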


\proof
Let $S'$ be any smooth fibre of $f_1:Z_1\to\CP^1$.
If $i\neq 2$, the curve $C_{1,i}$ is homologous in $E_i$ to $C_i=S'\cap E_i$.
Therefore, using Proposition \ref{prop:rest1}, we have
\begin{align}\label{int01}
\ms M(r,\rho)\cdot C_{1,i}=0
\quad\text{ for all } i\neq 2.
\end{align}
The case $i=2$ requires more attention. 
Dropping components that are disjoint from $C_{1,2}$,
see Figure \ref{fig1}, we first obtain
\begin{align}\label{int07}
\ms M(r,\rho)
\cdot C_{1,2} = \rho(l_2E_2+l_3E_3) C_{1,2}.
\end{align}
Further, as $C_{1,2}\cap E_3$ is one point and 
the intersection is transverse,
we have $ E_3 C_{1,2}= 1$.
Next,  $E_2 C_{1,2}= (C_{1,2})^2_{S_1^+}$ since $E_2\cap S_1^+
= C_{1,2}$ and the intersection is transverse.
Moreover, since the pairs $(S_1^+, C_{1,2})$
in $Z_1$ and $(S_1^+,C_1)$ in the original twistor space $Z$
are isomorphic, the self-intersection number
$C_{1,2}^2$ in $S_1^+ \subset Z_{1}$ is equal to $C_2^2$ calculated in
$S_1^+ \subset Z$.
We let $a_2:=-C_2^2$, calculated in $S\subset Z$.
By the adjunction formula and because
$S_1^+ + S_1^- = F$ on $Z$, we have 
\begin{align*}
  (C_2)^2_{S_1^+} &= -2 + K\inv_{S_1^+}C_2
  = -2 + (K_Z\inv - S_1^+) C_2
  = -2+ (F + S_1^-)C_2 \\
  &= -2 + FC_2 +S_1^-C_2
  = -2 + K\inv_{S}C_2 + 1
  = (C_2)^2_S+1 =
  -a_2+1.
\end{align*}
By Lemma \ref{lem:Pzero} (ii) we have $m_0P C_2 = 0$ on the original
surface $S\subset Z$ of which we assumed $\kappa^{-1}(S)=1$.
Thus, we have the relation
\begin{align}\label{rel1}
l_1 -a_2l_2 + l_3 = 0,
\end{align} 
with which we obtain
\[
 (l_2E_2+l_3E_3)
 C_{1,2}\notag
  = l_2(-a_2+1) + l_3=l_2- l_1.
\]
Therefore, from \eqref{int07}, we get
\begin{align}\label{int02}
\ms M(r,\rho) \cdot C_{1,2} = -\rho (l_1-l_2).
\end{align}

For the remaining intersection number
\eqref{int101}
we notice that the curve $\Delta_1 \cup C_{1,2}$, regarded
as a curve in the surface $E_2$, is homologous to 
the fibre $S\cap E_{2} \simeq C_{2}$ 
of the projection $f_1|_{E_2}:E_2\to\CP^1$.
Because $\ms M(r,\rho)|_{S} = m_{0}P$
and $\ms M(r,\rho)\cdot (S\cdot E_{2})
= (\ms M(r,\rho)\cdot S)\cdot E_{2} = (m_{0}PC_{2})_{S}$ is zero,
we obtain 
$$
\ms M(r,\rho)\cdot \big(\Delta_1 + C_{1,2}\big) =0,
$$
and \eqref{int101} follows from \eqref{int02}.
\proofend

\vsp
Let $\rho>0$ be an integer and
recall that we have chosen notation so that $l_{1}>l_{2}$.
From \eqref{int100} for $i=2$
we then obtain $\ms M(r,\rho)\cdot C_{1,2}<0$. 
Using \eqref{int100} for $i\ne 2$ and reality it follows now that the two
chains 
\begin{align}\label{bc1}
C_{1,2}\cup C_{1,3}\cup\dots\cup C_{1,k}\cup \ol C_{1,1}
\quad{\text{and}}\quad
\ol C_{1,2}\cup \ol C_{1,3}\cup\dots\cup \ol C_{1,k}\cup  C_{1,1}
\end{align}
are contained in the base locus of the linear system $|\ms M(r,\rho)|$.
This will be a stepping stone for the following stronger statement, which will
play a key role in the proof of the main theorem.

\begin{proposition}\label{prop:keyvan}
For any $r\in\ZZ$ and $\rho>0$, we have
$$
H^0\big(E, \ms M(r,\rho)|_E\big) = 0.
$$
In particular, if $\rho>0$ the divisor $E$ is a fixed component of the linear
system $|\ms M(r,\rho)|$ on $Z_{1}$.
\end{proposition}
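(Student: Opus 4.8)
The plan is to prove, by descent on the integer $r$, that every $s\in H^0(E,\ms M(r,\rho)|_E)$ vanishes. The mechanism is the claim that any such $s$ vanishes identically on the fibre $\ms C=(f_1|_E)^{-1}(\lmd_1)$. Granting this: $\ms C$ is an effective Cartier divisor on $E$ with $\ms O_E(\ms C)=(f_1|_E)^*\ms O(1)$, so that $\ms M(r,\rho)|_E\otimes\ms O_E(-\ms C)=\ms M(r-1,\rho)|_E$ and $s$ comes from a section of $\ms M(r-1,\rho)|_E$; the same argument applies again, so $s$ is divisible by every power of $(f_1|_E)^*(t-\lmd_1)$ and hence $s=0$. (Put differently, this is a downward induction on $r$; the base case holds because for $r\ll0$ the line bundle $\ms M(r,\rho)$ has negative degree on a $\mu$-fibre of every component of $E$, so has no sections on any component, and $H^0(E,\ms M(r,\rho)|_E)$ injects into the sum of the $H^0$'s of the components.)

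To prove $s|_{\ms C}=0$ I would first describe $\ms M(r,\rho)$ on the components of $E$. The strict transform $E_i\subset Z_1$ of the exceptional divisor over $C_i$ is obtained from $\CP^1\times\CP^1$ by blowing up the images of those ordinary double points of $\hat Z$ that lie on it, and $f_1|_{E_i}\colon E_i\to\CP^1$ exhibits it as a ruled surface with general fibre a line $\simeq C_i$. Combining Lemma \ref{lemma:int1}, the shape of the small resolution, and the fact that off the exceptional curves the normal bundle of $E_i$ restricts to $\ms O(-1)$ on a $\mu$-fibre, one finds
\[
\ms M(r,\rho)\big|_{E_i}\;\simeq\;(f_1|_{E_i})^*\ms O(r-\rho l_i)\;-\;\sum_{a} m_{a}\,\Delta^{(i)}_{a},
\]
where the $\Delta^{(i)}_{a}$ are the exceptional curves of $E_i\to\CP^1\times\CP^1$ and the $m_{a}\ge 0$; for instance $\Delta_1\subset E_2$ occurs with multiplicity $m=\ms M(r,\rho)\cdot\Delta_1=\rho(l_1-l_2)$, which is \emph{strictly} positive because the indices were arranged so that $l_1>l_2$. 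Since $(f_1|_{E_i})_*\ms O_{E_i}=\ms O_{\CP^1}$, it follows that every section of $\ms M(r,\rho)|_{E_i}$ is the pull-back $(f_1|_{E_i})^*g_i$ of some $g_i\in H^0(\CP^1,\ms O(r-\rho l_i))$ vanishing at each $f_1(\Delta^{(i)}_{a})$ to order $m_{a}$; in particular it vanishes on the whole fibre over $\lmd$ as soon as $g_i(\lmd)=0$.

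The descent step then goes as follows. On $E_2$ the curve $\Delta_1$ lies over $\lmd_1$ with the positive multiplicity $\rho(l_1-l_2)$, so $g_2(\lmd_1)=0$ and $s|_{E_2}$ vanishes on the entire fibre $(f_1|_{E_2})^{-1}(\lmd_1)=C_{1,2}\cup\Delta_1$. I then propagate this around the cycle of surfaces $E_1,E_2,\dots,E_k,\ol E_1,\dots,\ol E_k$, whose consecutive members meet along the $\mu$-fibres carrying the nodes of $\ms C$: once $s$ is known to vanish at a single point of $(f_1|_{E_j})^{-1}(\lmd_1)$, the relevant $g_j$ vanishes at $\lmd_1$, hence $s$ vanishes on all of that fibre, which meets the next component at a node of $\ms C$ and so carries the vanishing forward. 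One trip around the cycle kills $s$ on every component of $\ms C$, i.e.\ $s|_{\ms C}=0$. Finally, the vanishing $H^0(E,\ms M(r,\rho)|_E)=0$ together with $0\to\ms M(r,\rho)\otimes\ms O_{Z_1}(-E)\to\ms M(r,\rho)\to\ms M(r,\rho)|_E\to0$ gives $H^0(Z_1,\ms M(r,\rho))=H^0(Z_1,\ms M(r,\rho)\otimes\ms O_{Z_1}(-E))$, i.e.\ $E$ is a fixed component of $|\ms M(r,\rho)|$ for every $\rho>0$.

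The part I expect to be the real work is the identification of $\ms M(r,\rho)|_{E_i}$ displayed above — that is, determining precisely how the chosen small resolution blows up each $\CP^1\times\CP^1$ and with which non-negative multiplicities over which points $\lmd_j$ — since this is where the combinatorics of the cycle \eqref{cycle1}, the reality structure, and the coefficients $l_i$ enter, via Lemma \ref{lemma:int1}. Once that per-component picture is secured, the pull-back description of sections, the propagation around the cycle, and the descent in $r$ are all routine.
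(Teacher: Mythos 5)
Your overall architecture --- reduce everything to showing $s|_{\ms C}=0$, then descend in $r$ with a base case at $r\ll 0$ --- is the same as the paper's, and the descent, the base case, and the treatment of the chains \eqref{bc1} are fine. The gap is in the step that is the actual content of the proposition: showing that $s$ vanishes on the small-resolution curves $\Delta_1$ and $\ol\Delta_1$. Your description of $\ms M(r,\rho)|_{E_i}$ and of its sections is wrong precisely on the components where it matters. On $E_2$ the fibre of $f_1$ over $\lmd_1$ is reducible, equal to $C_{1,2}+\Delta_1$, and in $\Pic(E_2)$ one has $(f_1|_{E_2})^*\ms O(1)-\Delta_1 = C_{1,2}$ (the strict transform of the fibre), so with $m=\rho(l_1-l_2)>0$,
\[
(f_1|_{E_2})^*\ms O(d)-m\Delta_1 \;=\; (f_1|_{E_2})^*\ms O(d-m)+mC_{1,2},
\]
whose sections are of the form $(f_1|_{E_2})^*h\cdot\theta^{m}$ with $h\in H^0(\ms O_{\CP^1}(d-m))$ and $\theta$ the canonical section cutting out $C_{1,2}$. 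Such a section vanishes on $C_{1,2}$ to order at least $m$, but on $\Delta_1$ only to order $\mathrm{ord}_{\lmd_1}(h)$, which is generically $0$. (Sanity check: for $d=m=1$ the bundle is $\ms O(C_{1,2})$, whose unique effective divisor is $C_{1,2}$, not $C_{1,2}+\Delta_1$.) Thus a negative twist by $\Delta_1$ forces the \emph{other} component of the fibre into the base locus, not $\Delta_1$ itself; your claim that $g_2(\lmd_1)=0$ makes $s|_{E_2}$ vanish on the entire fibre $C_{1,2}\cup\Delta_1$ is false, and your propagation around the cycle stalls at $\Delta_1$ for the same reason: knowing that $s$ vanishes at the two points $\Delta_1\cap C_{1,1}$ and $\Delta_1\cap C_{1,2}$ does not kill $s|_{\Delta_1}$, because $\deg\ms M(r,\rho)|_{\Delta_1}=\rho(l_1-l_2)$ may well be $\ge 2$. (A secondary issue: at the other nodes the coefficients $m_a=\ms M(r,\rho)\cdot\Delta_a=\pm\rho(l_j-l_{j+1})$ need not be non-negative, depending on which pair of divisors the chosen small resolution blows up there.)

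What is actually needed, and what the paper does, is a cross-component argument at the level of divisors on the surface $E_2$: the chain $\ol C_{1,2}\cup\cdots\cup\ol C_{1,k}\cup C_{1,1}$ lies in $(s)$; the point $C_{1,1}\cap\Delta_1$ lies on $E_2$ but not on $C_{1,2}$; and $(s|_{E_2})$ has no horizontal components because $\ms M(r,\rho)$ has degree $0$ on the general fibre of $f_1|_{E_2}$; hence the component of $(s|_{E_2})$ through that point can only be $\Delta_1$. You should replace the ``pullback of $g_i$'' description by this argument, or at least by the correct divisor-theoretic statement that $(s|_{E_i})$ is a non-negative combination of \emph{components} of $f_1$-fibres, with possibly unequal multiplicities on the two components of a reducible fibre.
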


\proof
Recall that the cycle of rational curves $\ms C$  
belongs to the linear system $|(f_{1}|_E)^{\ast}\ms O(1)|$ on $E$ and
that $\ms M(r,\rho) = \ms M(r-1,\rho)\otimes f_{1}^{*}\ms O(1)$. Therefore,
multiplication by a section of $f_{1}^{*}\ms O(1)|_E$
with zero locus $\ms C$ provides an exact sequence 
\[
0 \lras \ms M(r-1,\rho)|_E \lras \ms M(r,\rho)|_E 
\lras \ms M(r,\rho)|_{\ms C} \lras 0.
\]
To first show that $\phi = 0$ in the induced exact sequence
\begin{equation}
  \label{eq:phi-is-zero}
  0 
  \lras H^{0}\left(\ms M(r-1,\rho)|_E\right)
  \lras H^{0}\left(\ms M(r,\rho)|_E\right)
  \stackrel{\phi}{\lras} H^{0}\left(\ms M(r,\rho)|_{\ms C}\right)\,,
\end{equation}
we let 
$$
s\in H^0\big(E,\ms M(r,\rho)|_E\big)
$$
be a non-zero section and note that $\phi(s) = s|_{\ms C}$.

Suppose that $s$ identically vanishes on a component of $E$.
Then since, for each $i$ the degree of the line bundle $\ms M(r,\rho)$ on a
general fibre of $f_1|_{E_i}:E_i\to\CP^1$  is zero by 
Proposition \ref{prop:rest1},
we obtain that $s$ vanishes identically on $E$.
Hence $s$ cannot vanish identically on any component
of $E$.
Therefore we can consider the effective Cartier divisor $(s)$ on $E$, and the
restriction $(s|_{E_i})$ is a curve on $E_i$ for any $i$.

From the intersection numbers \eqref{int100} we have seen above that
the divisor $(s)$ includes the two connected chains \eqref{bc1} for
$\rho>0$ and all $r\in\ZZ$.
Hence the curve $(s|_{E_2})$ passes through
the point $C_{1,1}\cap \Delta_1$, see Figure \ref{fig1}.
This point is not on $C_{1,2}$, because $C_{1,1}$ and $C_{1,2}$ are disjoint
curves in $f_1\inv(\lmd_1)\subset Z_{1}$. 
If the divisor $(s|_{E_2})$ had a component other than  $\Delta_1$, containing
this point, then we would have $s|_{E_2}= 0$ by Proposition \ref{prop:rest1}, 
since the fibre of $f_1|_{E_2}$ through
this point is $\Delta_1+C_{1,2}$ and so 
$(s|_{E_2})$ would have to intersect any fibre of the projection
$f_1:E_2\to\CP^1$. 
Therefore such a component does not exist.
Hence the divisor $(s|_{E_2})$, and so $(s)$, includes $\Delta_1$.
The same argument shows that $(s)\supset\ol{\Delta}_1$.
We have seen before that $(s)$ includes all the other components
$C_{1,i}$ and $\ol C_{1,i}$ of $\ms C=f_1\inv(\lmd_1)\cap E$ and so
we obtain $\ms C\subset (s)$, which implies $\phi(s) = s|_{\ms C} = 0$.
This shows that $\phi=0$. 

From the exact sequence \eqref{eq:phi-is-zero} we obtain now for all $\rho>0$
and all $r\in\ZZ$ that 
\begin{equation}
  \label{eq:isomorphic}
  H^0\big(E, \ms M(r-1,\rho)|_E\big) \cong H^0\big(E, \ms M(r,\rho)|_E\big)\,.
\end{equation}
On the other hand, using \eqref{M1} and the 
projection formula for $f_1:E\to\CP^1$, we obtain
\begin{align*}
  f_{1*} \ms M(r,\rho)|_E
  &\cong f_{1*} \bigl(f_1^*\ms O_{\CP^1}(r)
    \otimes \ms O_{E}(\rho m_0\bm P)\bigr)\\
  &\cong \ms O_{\CP^1}(r) \otimes f_{1*} \ms O_{E}(\rho m_0\bm P)\,.
\end{align*}
For any coherent sheaf $\ms F$ on $\CP^{1}$ and for sufficiently large
$j$, $H^{0}\left(\ms O_{\CP^1}(-j) \otimes \ms F\right) = 0$.
Taking $j$ so that this holds for 
$\ms F= f_{1*} \ms O_{E}(\rho m_0\bm P)$,
 we obtain from \eqref{eq:isomorphic} that
\begin{align*}
H^0\big(E, \ms M(r,\rho)|_E\big)&\cong
H^0\big(E, \ms M(-j,\rho)|_E\big)\\
&\cong H^0\big(\CP^{1}, f_{1*}\ms M(-j,\rho)|_E\big) \\
&\cong H^0(\CP^1, \ms O_{\CP^1}(-j)\otimes \ms F) = 0
\end{align*} for all $r\in\ZZ$ and
all $\rho>0$.
\proofend

\vsp
By Propositions \ref{prop:rest1} the restriction
of $m_{0}\bm P$ to $f^{-1}(\lambda)\cap E\simeq C$ 
gives an element $\ms P_{\lambda}\in\Pic^{0}(C)$ for each value of $\lambda$
for which the fibre $f_{1}^{-1}(\lambda)$ is smooth.

\begin{proposition}\label{prop:Lnu2}
  Assume that $\ms P_{\lambda}$ has the same finite order $\tau$ for all
  smooth fibres $f_{1}^{-1}(\lambda)$.  
  Then, for any $r\in\ZZ$ and $\nu>0$, we have 
  \[
  \big|\ms M(r,\nu\tau)\big|
  = \big| f_1^*\ms O(r)\big| + 
  \nu\tau m_0\bm P.
  \]
\end{proposition}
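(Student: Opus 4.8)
The plan is to argue by induction on $\nu$, with base case the identity $\ms M(r,0)=f_1^*\ms O(r)$. Assume the statement for $\nu-1$ and all $r\in\ZZ$. To pass to $\nu$ it suffices to show that every $D\in|\ms M(r,\nu\tau)|$ contains $\tau m_0\bm P$ as a sub-divisor: indeed, then $D-\tau m_0\bm P$ is an effective divisor in $|\ms M(r,\nu\tau)-\tau m_0\bm P|=|\ms M(r,(\nu-1)\tau)|$, which by the inductive hypothesis equals $|f_1^*\ms O(r)|+(\nu-1)\tau m_0\bm P$; hence $D-\tau m_0\bm P=M+(\nu-1)\tau m_0\bm P$ with $M\in|f_1^*\ms O(r)|$, that is $D=M+\nu\tau m_0\bm P$. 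As the reverse inclusion $|f_1^*\ms O(r)|+\nu\tau m_0\bm P\subseteq|\ms M(r,\nu\tau)|$ is obvious, this yields the asserted equality.

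So let $D\in|\ms M(r,\nu\tau)|$ and restrict it to a general fibre $S'=f_1\inv(\lambda)$ of $f_1$, identifying $(S',S'\cap E)$ with $(S,C)$ via $\mu_1$. Since $\ms M(r,\nu\tau)|_{S'}\cong\ms O_{S'}(\nu\tau m_0 P)$, the divisor $D|_{S'}$ lies in the system $|\nu\tau m_0 P|$ on $S'$. The hypothesis that $\ms P_\lambda$ has the \emph{same} finite order $\tau$ for every smooth fibre forces $\ms P_\lambda$ to be independent of $\lambda$, because $\lambda\mapsto\ms P_\lambda$ is a morphism from an open subset of $\CP^1$ to $\Pic^0(C)$ whose image is contained in the finite set of elements of order $\tau$; in particular $\kappa\inv(S')=1$ by Corollary \ref{cor:finord}, so Proposition \ref{prop:Z3} is available on $S'$. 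By part (iii) of that proposition $|\nu\tau m_0 P|$ is composed with the elliptic pencil $|\tau m_0 P|$, so $D|_{S'}$ is a sum of $\nu$ fibres of the associated elliptic fibration $\phi$; moreover the unique fibre of $\phi$ containing $C$ equals $\tau m_0 P=\phi\inv(t_1)$. On the other hand $E$ is a fixed component of $|\ms M(r,\nu\tau)|$ by Proposition \ref{prop:keyvan}, whence $D|_{S'}\supset S'\cap E=C$. As $C$ is connected it lies in a single one of the summand-fibres of $D|_{S'}$, and that fibre must be $\phi\inv(t_1)$; therefore $D|_{S'}\ge\tau m_0 P$, i.e.\ the coefficient of $C_i$ (resp.\ $\ol C_i$) in $D|_{S'}$ is at least $\tau l_i$ for every $i$.

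It remains to transfer these multiplicities back to $Z_1$. For general $\lambda$ the fibre $S'$ meets each $E_i$ transversally along the curve $C_i^{(\lambda)}=E_i\cap S'$, and the curves $C_i^{(\lambda)}$ sweep out a dense subset of $E_i$ as $\lambda$ varies; since $D$ has only finitely many prime components, it follows that for general $\lambda$ no prime component of $D$ other than $E_i$ contains $C_i^{(\lambda)}$, and hence the coefficient of $E_i$ in $D$ equals the coefficient of $C_i^{(\lambda)}$ in $D|_{S'}$. By the previous paragraph this is $\ge\tau l_i$, and the same holds for $\ol E_i$. Thus $D\ge\tau m_0\bm P$, which completes the induction.

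The step I expect to be the main obstacle is this last transfer of multiplicities: one must justify that a general fibre meets each $E_i$ transversally, that the sections $E_i\cap S'$ genuinely move over all of $E_i$ (so that the coefficient of $E_i$ in $D$ is detected on a general fibre), and that $D|_{S'}$ cannot acquire multiplicity along $C_i$ from components of $D$ other than $E_i$. The surface-theoretic inputs (Propositions \ref{prop:Z3} and \ref{prop:keyvan} and Corollary \ref{cor:finord}) together with the constancy of $\ms P_\lambda$ are then used essentially as black boxes.
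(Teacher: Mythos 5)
Your proof is correct and follows essentially the same route as the paper: induction on $\nu$, with Proposition \ref{prop:keyvan} forcing $E$ into the base locus and the fibrewise structure of Proposition \ref{prop:Z3} (you use part (iii) and in effect re-derive part (iv), which is what the paper invokes directly) showing that a full copy of $\tau m_0\bm P$ splits off at each step. The only difference is cosmetic: you make explicit the transfer of multiplicities from a general fibre back to $Z_1$, which the paper leaves implicit in the phrase ``as this is valid for any smooth fibre''.
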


\proof
Recall that 
$$
\ms M(r,\nu\tau) = f_1^*\ms O(r) + \nu\tau m_0\bm P.
$$
By Proposition \ref{prop:keyvan} the divisor $E$ is a fixed component of
$\big|\ms M(r,\nu\tau)\big|$ and so
\begin{align}\label{mrnu}
\big|\ms M(r,\nu\tau)\big|= \big| f_1^*\ms O(r) + 
\nu\tau m_0\bm P-E\big| + E.
\end{align}
Let $S'$ be a general fibre of  $f_1:Z_1\to\CP^1$,
and identify the pair $(S',S'\cap E)$ with
the pair $(S',C)$ included in the original space $Z$
by the birational morphism $\mu_1:Z_1\to Z$.
Because we assumed $\ms P_{\lambda}$ to have finite order, we can use
Proposition \ref{prop:Z3} (iv) to obtain 
\begin{align}
\big|
\nu\tau m_0P-C
\big|
=\big|(\nu-1)\tau m_0P\big| +
( \tau m_0P - C)
\end{align}
on $S'$. As this is valid for any smooth fibre of $f_1$, we obtain that 
the divisor $\tau m_0\bm P - E$ on $Z_1$ is 
a fixed component of the system 
$| f_1^*\ms O(r) + 
\nu\tau m_0\bm P-E|$.
Hence, from \eqref{mrnu} and using definition \eqref{M1}, we obtain 
\begin{align*}
\big|\ms M(r,\nu\tau)\big|&= \big| f_1^*\ms O(r) + 
(\nu-1)\tau m_0\bm P\big| + \tau m_0\bm P\\
&= \big| \ms M(r,(\nu-1)\tau) \big| + \tau m_0\bm P.
\end{align*}
Using induction on $\nu>0$ we finally obtain 
\[
\big|\ms M(r,\nu\tau)\big|
=\big|\ms M(r,0)\big|+ \nu \tau m_0\bm P
= \big| f_1^*\ms O(r)\big| +\nu \tau m_0\bm P,
\]
as required.
\proofend

\vsp
We are now ready to prove our main result.

\vsp
\noindent
\emph{Proof of Theorem \ref{thm:main}.}
We first show that if the order of the restriction $\ms P_{\lambda}$ of
$m_{0}\bm P$ to $C\simeq f^{-1}(\lambda)\cap E$ is not the same for all smooth
fibres $f_{1}^{-1}(\lambda)$ then we have $a(Z)=1$.
To see this, we note that the set of points of finite order in
$\Pic^{0}(C)$, 
which is isomorphic to $\mathbb{C}^{\ast}$, has the property that any
continuous path that connects two distinct points  of finite order
necessarily contains points of infinite order. Moreover 
we have a continuous function
$\lambda\mapsto \ms P_{\lambda}\in\Pic^{0}(C)$ which is defined on the open
set of $\CP^{1}$ over which $f_{1}$ has smooth fibres. 
Therefore, if the order of $\ms P_{\lambda}$ is not constant, the image of the
continuous function mentioned above contains an element of infinite order. 
If $S'\in |F|$ is the member corresponding to such an element of infinite
order, Corollary \ref{cor:finord} shows that $\kappa\inv(S')\neq 1$.
As $d(S')=d(S)= 0$, Proposition \ref{prop:degree} implies 
$\kappa\inv(S')\neq 2$, and we conclude $\kappa\inv(S')=0$. 
As we have seen from \eqref{ineq1}, this implies $a(Z)=1$.

Thus it remains to consider the situation in which the order of 
$\ms P_{\lambda}$ is the same for all smooth fibres $f_{1}^{-1}(\lambda)$.
We will show by contradiction that this situation never happens.

So suppose that the restriction $\ms P_{\lambda}$ has the same finite order
$\tau$ for all $\lambda$ for which $f_{1}^{-1}(\lambda)$ is smooth. Then, by
Corollary \ref{cor:finord}, we have $\kappa^{-1}(S)=1$ for each smooth
$S\in|F|$. Hence $\kappa\inv=1$ for generic members of the pencil 
$|F|$ and, using \cite[Corollary 4.3]{Kr99}, we obtain that $a(Z)=2$.

On the other hand, with the aid of Proposition \ref{prop:Lnu2}, we are able to
show $a(Z)=1$ as follows.
For each integer $\nu>0$, \eqref{Z22} gives a Zariski decomposition in $S$ 
\[
\nu\tau m_0 C = \nu\tau m_0P + \nu\tau m_0N,
\]
and since the Zariski decomposition of the cycle $C$ is 
independent of the smooth member $S$ by Proposition \ref{prop:indep},
this is the Zariski decomposition of the divisor 
$\nu\tau m_0C$ on any smooth member of the pencil $|F|$.
Therefore, by Proposition \ref{prop:Z1},
for any non-singular fundamental divisor $S'\in |F|$,
the divisor $\nu\tau m_0N$ 
is a fixed component of the system $|\nu\tau m_0K\inv|$ on $S'$.
Hence the pull-back $\mu_1^*(\nu\tau m_0F)$ to $Z_1$ has the divisor 
$
\nu\tau m_0\bm N
$
as a fixed component; the divisor $\bm N$ was defined in \eqref{N}.
We define a line bundle $\ms L$ on $Z_1$ by
\begin{equation}\label{L1r}
\ms L := 
\mu_1^*(\tau m_0F) - \tau m_0\bm N
\simeq f_1^*\ms O(\tau m_0) + \tau m_0\bm P
=\ms M(\tau m_{0},\tau)\,,
\end{equation}
where we have used the relation \eqref{basic2} to get the isomorphism.
The line bundle $\ms L$ is equipped with a natural real structure
coming from that on $F$ on $Z$.
The morphism $\mu_{1}:Z_{1}\lra Z$ provides a bijection between the members of
the two linear systems $|\nu\tau m_0F|$ on $Z$ and
$|\ms L^{\otimes\nu}|$ on $Z_1$.
Moreover, the composition of $\mu_{1}$ with the rational map defined by
$|\nu\tau m_0F|$ is the rational map which is given by
$|\ms L^{\otimes\nu}|$. 
Because 
\[
|\ms L^{\otimes\nu}| = |\ms M(\nu\tau m_{0},\nu\tau)|
= \left|f_{1}^{\ast}\ms O(\nu\tau m_{0})\right| + \nu\tau m_0\bm P,
\]
by Proposition \ref{prop:Lnu2}, we see that the rational map
associated to $|\ms L^{\otimes\nu}|$ has one-dimensional image, 
equal to the image of a Veronese embedding of $\CP^{1}$.
Hence, for all $\nu>0$, the rational map defined by $|\nu\tau m_0F|$ has
one-dimensional image as well, and this implies that $a(Z)=\kappa(Z,F)=1$.
This contradicts our previous conclusion $a(Z)=2$.
Hence it is impossible that $\ms P_{\lambda}$ has finite order not depending
on $\lambda$.
\proofend

\medskip
It might be interesting to observe that in the argument that leads to the
contradiction in the second part of the above proof, the constancy of the
order of $\ms P_{\lambda}$ is only needed to apply Proposition \ref{prop:Lnu2}.

\section{Comment on a result in the paper \cite{Fu04}}
In \cite{Fu04}, the existence of a twistor space on $n\CP^2$ with algebraic
dimension two is claimed  for any $n>4$. 
The proof of this assertion consists essentially of the following two parts.
\begin{enumerate}\renewcommand{\labelenumi}{\arabic{enumi}.}
\item
For a non-singular surface $S$ which admits an anti-canonical cycle,
it is shown in \cite[Lemma 3.2]{Fu04} that the finiteness of the order of a
certain line bundle $L_a$ on a cycle of rational curves implies
$\kappa\inv(S)=1$. With the aid of formula \eqref{ZD1} it can be shown that
the pull-back of the line bundle $L_a$ to  $S$ is isomorphic to the line
bundle $\ms O(m_0P)|_C$ studied in Section 2.
\item
If a twistor space $Z$ on $n\CP^2$,
$n>4$, admits a divisor in $|F|$ which is biholomorphic to the surface $S$ in
the above part 1 satisfying $\kappa\inv(S)=1$,
then we always have $\dim |F|=1$.
It is claimed in the proof of \cite[Lemma 4.3]{Fu04} that the isomorphism class
of the line bundle $L_a$ from part 1 is the same for generic members of
the pencil $|F|$. 
\end{enumerate}
It follows that a generic fundamental divisor $S$ in a twistor space $Z$ as in
part 2 above satisfies $\kappa\inv(S)=1$. Using \cite[Proposition 4.1]{Fu04}
it is then concluded that $a(Z)=2$. 
This conclusion is in conflict with our Theorem \ref{thm}.

To resolve this contradiction, we first recall some notation of \cite{Fu04} and
then focus on a key step in the proof that the order of the line bundle $L_a$
does not depend on $S$.

Each non-singular fundamental divisor $S_t$ ($t\in\CP^1$) is given as an
iteration of blow-ups
\begin{align}\label{bseq}
S_t \stackrel{w_t}\lra T'\stackrel{u}\lra T
\stackrel{\aaa}\lra \CP^1\times\CP^1.
\end{align}
Here, $\aaa$ is the blow-up of $4$ points, all of which are
nodes of an anti-canonical cycle on $\CP^1\times\CP^1$ consisting of $4$
components. The morphism $u$ is a composition of blow-ups of which each 
centre is a node of the anti-canonical cycle.
Both $T'$ and $T$ are toric and their isomorphism classes are independent of
$t\in\CP^1$. 
On the other hand, $w_t$ blows up $4$ points, where each centre belongs
to the strict transforms of the $4$ exceptional curves of $\aaa$ which are
disjoint to each other. 
Thus the variation of the complex structure on $S_t$ is entirely encoded in
the variation of the $4$ points blown up under $w_{t}$. 
It is important to keep in mind that the sequence \eqref{bseq} is
constructed for each non-singular $S_t$ individually, and there is no natural
way to identify the surface $T'$ (and $T$ and $\CP^1\times\CP^1$ also) for 
different choices of $t\in\CP^1$.

Let $C$ be the anti-canonical cycle on $S_t$,
which is the base curve of the pencil $|F|$,
and $B'=w_t(C)$. 
The argument that the order of the line bundle $L_{a}$ does not
depend on the surface $S$ in the fundamental pencil is built in an essential
way on the assertion that the pull-back $(w_t|_{C})^*(K\inv_{T'}|_{B'})$ does
not depend on $t$. This independence is based on the claim that the morphism 
$w_t|_{C}:C\rightarrow B'$ does not depend on $t$.

We now derive a contradiction from the assumption that 
$w_t|_{C}:C\rightarrow B'$ is the same for two nearby values of $t$ for which
$S_{t}$ is a real smooth member of the fundamental pencil. This confirms that
there is a gap in the proof of \cite[Lemma 4.3]{Fu04} on line 10, page 1105. 

Let $S$ and $S'$ be two real irreducible fundamental divisors on a twistor
space $Z$ over $n\CP^{2}$ such that both are obtained be a sequence of
blow-ups from $\CP^{1}\times\CP^{1}$ as in \eqref{bseq}. 
The linear system of twistor lines on each of these smooth surfaces
gives, see \cite{PP94}, a morphism with target
$\CP^{1}$ and we can choose the morphisms in \eqref{bseq} so that their
composition with the first projection $p_{1}:\CP^{1}\times\CP^{1} \rightarrow
\CP^{1}$ is the morphism given by the pencil of twistor lines.
We now identify the surfaces $T'$, $T$ and $\CP^{1}\times\CP^{1}$ obtained by
blowing down some curves on $S$ with those obtained by blowing down some
curves on $S'$. As mentioned above, there is no natural way to do so. 

From Proposition \ref{prop:d1} we can deduce that the two component of the
cycle $C$ that are met by real twistor lines in a real fundamental divisor
$S_{t}$ do not change as long as $t$ varies within one connected component of
the set of real irreducible fundamental divisors, which is the complement of a
finite set of points on a circle. We assume that $S$ and $S'$ belong to such a
connected component and denote the morphisms $w_{t}$ for these two surfaces by
$w:S\rightarrow T'$ and $w':S'\rightarrow T'$, respectively. 
We let $\pi$ be the composition 
\[
\pi:S\stackrel{w}\lra T'\stackrel{u}\lra 
T \stackrel{\aaa}\lra \CP^1\times\CP^1\stackrel{p_{1}}\lra \CP^1
\]
and define $\pi':S'\lra\CP^{1}$ similarly. Both restrictions $w|_{C}$ and
$w'|_{C}$ have the same image $B'\subset T'$. Because 
$p_{1}\alpha u:T'\lra\CP^{1}$ is the same for both surfaces $S$ and $S'$, the
assumption that $w|_{C}=w'|_{C}$ implies that the two compositions 
\[
\pi|_C:C\stackrel{w|_C}\lra B'\subset T'\stackrel{u}\lra 
T \stackrel{\aaa}\lra \CP^1\times\CP^1\stackrel{p_{1}}\lra \CP^1
\]
and
\[
\pi'|_C:C\stackrel{w'|_C}\lra B'\subset T'\stackrel{u}\lra 
T \stackrel{\aaa}\lra \CP^1\times\CP^1\stackrel{p_{1}}\lra \CP^1
\]
coincide. Let now $L\subset S$ be a real twistor fibre, then $L\cap C =
\{p,\overline{p}\}$ is a set of two conjugate points. Consider the fibre
$L'\subset S'$ of $\pi'$ over the point $\pi(p)=\pi(\overline{p})$. As $\pi'$
has only finitely many reducible fibres and the real twistor lines in $S$ and
$S'$ meet the same components of $C$, we can choose $L$ so that $L'$ is
irreducible.  
Because, by assumption, $\pi|_C = \pi'|_C$, we have $L'\cap C =
\{p,\overline{p}\}$ as well. However, as there do not exist two real twistor
fibres in $Z$ through the point $p$ and $S\cap S'$ does not contain a real
twistor fibre, $L'$ cannot be a real member of the
pencil $|L'|$ in $S'$. Hence, $L'$ and its conjugate are two different members
of the same pencil on $S'$ and they intersects at least at the two points $p$
and $\overline{p}$. But this is absurd as the irreducible curve $L'$ has
self-intersection number $0$ on $S'$. This contradiction shows that
$w_t|_{C}:C\rightarrow B'$ cannot be the same for any two values of $t$ for
which $S_{t}$ is real. 

A detailed analysis of the argument in \cite[p.\ 1105]{Fu04} reveals that 
the core problem is whether for $s\ne t$ the automorphism
$(w_s|_C)\circ (w_t|_C)\inv:B'\to B'$ extends to an automorphism
$T'\to T'$, or more precisely, to an isomorphism between 
neighbourhoods of $B'$. This seems to have been overlooked in \cite{Fu04}.
A priori there is no reason for this to hold,
and our Theorem \ref{thm} shows that 
for generic $s\in \CP^1$, such an extension cannot exist and
even if some member in the pencil $|F|$ satisfies 
$\kappa\inv=1$, the general member of the pencil needs to satisfy
$\kappa\inv =0$.

\vspace{5mm}
\noindent
{\small
Nobuhiro Honda\\
Mathematical Institute, Tohoku University, and\\
Department of Mathematics, Tokyo Institute of Technology, Tokyo, JAPAN
(current affiliation)\\
{\tt{honda@math.titech.ac.jp}}

\bigskip
\noindent
Bernd Kreu\ss ler\\
Department of Mathematics and Computer Studies,
Mary Immaculate College,
Limerick, IRELAND\\
{\tt{bernd.kreussler@mic.ul.ie}}
}
\end{document}